\documentclass[10pt,twoside, a4paper, english, reqno]{amsart}
\usepackage{listings, graphicx, amsmath, varioref, amscd, amssymb,color, bm}
\usepackage[pdftex, colorlinks=true, backref]{hyperref}

\numberwithin{equation}{section}
\allowdisplaybreaks

\newtheorem{theorem}{Theorem}[section]
\newtheorem{lemma}[theorem]{Lemma}

\theoremstyle{definition}
\newtheorem{definition}[theorem]{Definition}

\theoremstyle{remark}
\newtheorem{remark}[theorem]{Remark}

\newcommand{\Div}[1]{\operatorname{div} #1}
\newcommand{\Rot}[1]{\operatorname{curl} #1}

\newcommand{\Curl}[1]{\operatorname{curl}#1}

\newcommand{\vc}[1]{{\bm{#1}}}

\newcommand{\weak}{\rightharpoonup}

\newcommand{\norm}[1]{\left\Vert#1\right\Vert}
\newcommand{\abs}[1]{\left|#1\right|}
\newcommand{\Set}[1]{\left\{#1\right\}}
\newcommand{\jump}[1]{\left[#1\right]}
\newcommand{\vrho}{\varrho}
\newcommand{\inb}{\in_{\text{b}}}
\newcommand{\Dt}{\Delta t}
\newcommand{\R}{\mathbb{R}}
\newcommand{\weakto}{\rightharpoonup}
\newcommand{\mcw}{\mathcal{W}}

\newcommand{\Om}{\ensuremath{\Omega}}
\newcommand{\cOm}{\ensuremath{\overline{\Omega}}}
\newcommand{\pOm}{\ensuremath{\partial\Omega}}
\newcommand{\Dom}{(0,T)\times\Omega}
\newcommand{\cDom}{[0,T)\times\overline{\Omega}}

\newcommand{\eff}{P_{\text{eff}}}
\newcommand{\binner}{\partial E \setminus \partial \Om}

\newcommand{\solutiontext}
{
	Let $\Set{(\vrho_{h},\vc{w}_{h},\vc{u}_{h})}_{h>0}$ 
	be a sequence of numerical solutions 
	constructed according to \eqref{eq:num-scheme-II} 
	and Definition \ref {def:num-scheme}.
}

\title[Mixed FEM for a semi--stationary Stokes system]
{Convergence of a mixed method 
for a semi-stationary compressible Stokes system}

\author[K. H. Karlsen]{Kenneth H. Karlsen}
\address[Kenneth H. Karlsen]{\newline
         Centre of Mathematics for Applications \newline
         University of Oslo\newline
         P.O. Box 1053, Blindern\newline
         N--0316 Oslo, Norway\newline
         and\newline
		 Center for Biomedical Computing,\newline
         Simula Research Laboratory\newline
         P.O. Box 134\newline
         N--1325 Lysaker, Norway}
\email[]{kennethk@math.uio.no}
\urladdr{http://folk.uio.no/kennethk}

\author[T. K. Karper]{Trygve K. Karper} 
\address[Trygve K. Karper]{\newline 
		Centre of Mathematics for Applications \newline 
		University of Oslo\newline 
		P.O. Box 1053, Blindern\newline 
		N--0316 Oslo, Norway}
\email[]{t.k.karper@cma.uio.no}
\urladdr{http://folk.uio.no/trygvekk/}

\date{\today}

% \subjclass[2000]{Primary: 35D05, 65M12; Secondary: 65M06...... RETT OPP DETTE ................}

%\subjclass{............................35G25, 35L05, 65M06, 65M12...............................}

%\subjclass{Primary: 35G25, 35L05; Secondary: 65M06, 65M12}

% 35G25 Initial value problems for nonlinear higher-order PDE, nonlinear evolution equations

% 35L05 Wave equation

% 65M06 Finite difference methods 

% 65M12 Stability and convergence of numerical method

% \thanks{.......The research of K. H. Karlsen is supported by an Outstanding 
% Young Investigators Award (OYIA) from the
% Research Council of Norway. {\bf oppdater denne ....}}
% 
% \keywords{..........., convergence of approximate solutions {\bf oppdater denne med masse rart...}}

% NEW stuff

\subjclass[2000]{Primary 35Q30, 74S05; Secondary 65M12}
% 35Q30 Stokes and Navier-Stokes equations
% 74S05 Finite element methods
% 65M12 Stability and convergence of numerical methods

\keywords{Semi--stationary Stokes system, compressible fluid flow, Navier-slip boundary condition, 
mixed finite element method, discontinuous Galerkin scheme, convergence}

\thanks{This work was supported by the Research Council of Norway through
an Outstanding Young Investigators Award (K. H. Karlsen). 
This article was written as part of the the international research program
on Nonlinear Partial Differential Equations at the Centre for
Advanced Study at the Norwegian Academy of Science
and Letters in Oslo during the academic year 2008--09.}

\begin{document}

\begin{abstract}
We propose and analyze a finite element method for a semi--stationary 
Stokes system modeling compressible fluid flow 
subject to a Navier--slip boundary condition. 
The velocity (momentum) equation is approximated by a mixed finite element 
method using the lowest order N{\'e}d{\'e}lec spaces of the first kind. 
The continuity equation is approximated by a 
standard piecewise constant upwind discontinuous 
Galerkin scheme. Our main result states that the 
numerical method converges to a weak solution. 
The convergence proof consists of two main steps: 
(i) To establish strong spatial compactness of the 
velocity field, which is intricate since the element spaces are 
only $\Div{}$ or $\Curl{}$ conforming. 
(ii) To prove that the discontinuous Galerkin approximations 
converge strongly, which is required in view of the nonlinear pressure function. 
Tools involved in the analysis include a higher integrability 
estimate for the discontinuous Galerkin approximations, a discrete equation for 
the effective viscous flux, and various renormalized formulations 
of the discontinuous Galerkin scheme.  
\end{abstract}

\maketitle

{\small \tableofcontents}

\section{Introduction}\label{sec:intro}
The purpose of this paper is to prove convergence of a 
finite element method for the semi--stationary 
barotropic compressible Stokes system
\begin{align}
	\vrho_{t} + \Div (\vrho\vc{u}) 
	&= 0, \quad \text{in $(0,T) \times \Omega$}, \label{eq:contequation}\\
	-\mu \Delta \vc{u} - \lambda D\Div\vc{u} + Dp(\vrho) &= \vc{f}, 
	\quad \text{in $(0,T)\times \Omega$},
	\label{eq:momentumeq} 
\end{align}
with initial data
\begin{align}\label{initial}
	\vrho|_{t=0} & = \vrho_{0},
	\quad \textrm{on $\Omega$}.
\end{align}
Here $\Omega$ is a simply connected, bounded, open, 
polygonal domain in $\R^N$ ($N=2,3$), with Lipschitz 
boundary $\partial \Omega$, and $T>0$ is a fixed final time. 
The unknowns are the density $\vrho = \vrho(t,\vc{x}) \geq 0$ 
and the velocity $\vc{u} = \vc{u}(t,\vc{x}) \in \R^N$, with 
$\vc{x} \in \Omega$ and $t\in (0,T)$. 
We denote by $\Div$ and $D$ the usual spatial divergence 
and gradient operators and by $\Delta$ the spatial Laplace operator.

The pressure function is assumed to be of 
the form $p(\vrho) = a\vrho^\gamma$, with $a>0$ (Boyle's law). 
Typical values of $\gamma$ ranges from a maximum of $\frac{5}{3}$ 
for monoatomic gases, through $\frac{7}{5}$ for diatomic gases 
\emph{including air}, to lower values close to $1$ for 
polyatomic gases at high temperatures. Throughout this paper we will 
always assume that $\gamma>1$. The case $\gamma=1$ can also be 
treated; indeed, it is simpler since the pressure function is linear. 
Furthermore, the viscosity coefficients $\mu, \lambda$ 
are assumed to be constant and satisfy $\mu> 0, N\lambda+2\mu\geq 0$.

The study of the system \eqref{eq:contequation}--\eqref{eq:momentumeq} 
can be motivated in several ways. Firstly, the system can be used 
as a model equation for the barotropic compressible Navier--Stokes equations.  
This might be a reasonable approximation for strongly viscous fluids, where 
convection may be neglected. Secondly, Lions \cite{Lions:1998ga} use solutions of 
\eqref{eq:contequation}--\eqref{eq:momentumeq}
to construct solutions to the barotropic compressible Navier--Stokes equations. 

Among many others, the semi--stationary system \eqref{eq:contequation}--\eqref{initial} 
has been studied by Lions in \cite[Section 8.2]{Lions:1998ga}. 
He proves the existence of weak solutions and some 
higher regularity results. In particular, weak solutions was proven to be
unique in the case of periodic boundary conditions or when 
the equations are solved on the hole of $\mathbb{R}^N$.  
Uniqueness was not obtained in the case of regular Dirichlet boundary conditions 
and moreover higher regularity results was only shown to hold locally.

In this paper we impose the following boundary conditions, which are 
relevant in the context of geophysical fluids and shallow water models:
\begin{equation}\label{eq:bc-normal}
	\vc{u} \cdot \nu = 0, 
	\quad \textrm{on $(0,T)\times \partial \Omega$},
\end{equation}
and
\begin{equation}\label{eq:bc-navierslip}
	\begin{split} 
		\Curl \vc{u} & = 0, \quad 
		\textrm{on $(0,T)\times \partial \Omega$ if $N=2$}, \\
		\Curl \vc{u}\times \nu  &= 0, \quad 
		\textrm{on $(0,T)\times\partial \Omega$ if $N=3$},
	\end{split}
\end{equation}
where $\nu$ denotes the unit outward normal to $\partial \Omega$.
The first condition is a natural condition of impermeability type on the normal velocity.
The second condition is in the literature often referred to as the Navier--slip condition.  
It can be interpreted as a viscous dissipation term at the boundary 
(more precisely ``non-dissipation" since this term 
is equal to zero) \cite{Lions:1998ga}. 

In some geophysical applications, conditions 
like \eqref{eq:bc-normal}--\eqref{eq:bc-navierslip} 
are preferred over the classical Dirichlet condition since the latter 
necessitates expensive calculations of boundary layers. 
Of more importance to this paper, the boundary conditions 
\eqref{eq:bc-normal}--\eqref{eq:bc-navierslip} 
will allow us to use the finite element method 
in a solution space that can be split into two orthogonal 
parts in terms of a discrete version of the Hodge decomposition, a 
fact that will play a crucial role in our analysis.

Although many numerical methods have been proposed for the compressible 
Stokes and Navier--Stokes equations, the convergence 
properties of these methods are mostly unsettled, especially in 
several spatial dimensions. Ultimately, it is not clear if these 
numerical methods converge to a weak solution as the discretization parameters tend to zero. 
In one dimension, the available results are due 
to Hoff and his collaborators \cite{Zarnowski:1991uq, Zhao:1994fk, Zhao:1997qy}. 
All these results apply to the compressible Navier--Stokes equations 
in Lagrangian coordinates, and moreover require the initial density to be of bounded variation.  
Interesting results regarding the existence and long time behavior of solutions to the 
one dimensional compressible Navier--Stokes have also been obtained using
semi--discrete finite difference schemes in \cite{Hoff:1987fj,Hoff:1991kx,Chen:2000yq}, again 
in Lagrangian coordinates with the initial density of bounded total variation. 
In more than one spatial dimension, we refer to a recent paper
\cite{Gallouet:2007lr} in which a convergent numerical method for a stationary compressible Stokes system 
is proposed. The Stokes system considered in \cite{Gallouet:2007lr}
is similar to \eqref{eq:contequation}--\eqref{eq:momentumeq}
with linear pressure and no temporal dependence.

Let us now discuss our choice of numerical method for the semi-stationary Stokes system. 
For the discretization of \eqref{eq:contequation} we utilize a 
discontinuous Galerkin scheme based on piecewise constant approximations in space and time.
The discontinuous Galerkin scheme was introduced more than 30 
years ago \cite{Lasaint:1974kh,Reed:1973hq} and has since 
then undergone a blooming development, cf.~\cite{Cockburn:1999ud,Cockburn:2001bd} 
for a review. In the context of linear transport equations 
with rough (i.e., non-Lipschitz) coefficients, a discontinuous 
Galerkin scheme, with piecewise polynomial approximations 
of arbitrary degree in the spatial variable and piecewise constant or linear approximations 
in the temporal variable, has recently been analyzed by Walkington in \cite{Walkington:2005jl}. 
The work \cite{Walkington:2005jl} is further developed in \cite{Liu:2007fe} for 
the variable-density incompressible Navier-Stokes equations.

Let us now turn to the velocity (or momentum) equation \eqref{eq:momentumeq}. 
By introducing the vorticity $\vc{w}= \Curl \vc{u}$ as an auxiliary 
unknown, keeping in mind the vector identity $-\Delta = \Curl\Curl -D \Div$, 
we can recast the momentum equation as
\begin{equation}\label{eq:vorticity-form}
	\mu \Curl \vc{w} - (\lambda + \mu) D \Div \vc{u}
	+Dp(\vrho) = \vc{f},
\end{equation}
where we suppress the time variable $t$ (we refer the reader to subsequent 
sections for more precision).  Hence the velocity equation \eqref{eq:momentumeq}, 
together with the boundary conditions 
\eqref{eq:bc-normal}--\eqref{eq:bc-navierslip}, admits a formulation 
that lends itself naturally to a mixed finite element 
method \cite{Girault:1986fu,Nedelec:1980ec,Nedelec:1986bv}.  

Denote by $\vc{W}^{\Div, 2}_{0}$ the vector fields $\vc{u}$ on $\Omega$ 
for which $\Div \vc{u}\in L^2$ and $\vc{u} \cdot \nu|_{\partial \Omega} = 0$, and by 
$\vc{W}_{0}^{\Curl, 2}$ the vector fields $\vc{w}$ on $\Omega$ for which 
$\Curl \vc{w}\in L^2$ and $\vc{w} \times \nu|_{\partial \Omega}=0$. 
We choose corresponding mixed finite element spaces $\vc{V}_{h} \subset \vc{W}^{\Div,2}_{0}$ 
and $\vc{W}_{h}\subset \vc{W}^{\Curl,2}_{0}$ based on 
N{\'e}d{\'e}lec's elements of the first kind \cite{Nedelec:1980ec}. %,Nedelec:1986bv}.
The mixed finite element method seeks functions $\vc{w}_h\in \vc{W}_{h}$ 
and $\vc{u}_h\in \vc{V}_{h}$ such that
\begin{equation*}%\label{eq:mixed-khk}
	\begin{split}
		& \int_{\Omega}\mu\Curl \vc{w}_h \vc{v}_h
		+\left[(\mu + \lambda)\Div \vc{u}_h 
		- p(\vrho_h)\right]\Div \vc{v}_h\ dx
		= \int_{\Omega}\vc{f}_h\vc{v_h}\ dx, \\
		& \int_{\Omega} \vc{w}_h\vc{\eta}_h 
		-\Curl \vc{\eta}_h \vc{u}_h \ dx = 0, 
	\end{split}
\end{equation*}
for all $(\vc{\eta}_h,\vc{v}_h) \in \vc{W}_{h}\times \vc{V}_{h}$, where 
$\vrho_h,\vc{f}_h$ are given piecewise constant functions. 

Let us denote the numerical solution of the 
semi-stationary Stokes system by $(\vrho_{h},\vc{w}_{h},\vc{u}_{h})=(\vrho_{h},\vc{w}_{h},\vc{u}_{h})(t,\vc{x})$. 
The main goal is to prove that $\Set{(\vrho_{h},\vc{w}_{h},\vc{u}_{h})}_{h>0}$ 
converges to a weak solution, at least along a subsequence. 
The challenging issue is to show that the 
density approximations $\vrho_h$, which on the outset is only 
weakly compact in $L^2$, in fact converges strongly. Strong convergence 
is mandatory if we want to recover the semi-stationary Stokes system when 
taking the limit in the discrete equations as $h\to 0$. Related to this issue, the above 
mixed method enjoys some advantages over the traditional finite 
element method based on $\vc{H}^1$ elements. 
In particular, the approximation spaces $\vc{W}_{h}$ and $\vc{V}_{h}$ satisfy
$$
\vc{V}_{h} = \Curl \vc{W}_{h} + \vc{Z}_{h},
$$
for some $\vc{Z}_{h} \subset \vc{V}_{h}$ satisfying $\vc{Z}_{h} \perp \Curl \vc{W}_{h}$.  
An immediate consequence of this discrete Hodge decomposition 
is that upon writing $\vc{u}_{h} = \Curl \vc{\eta}_{h} + \vc{z}_{h}$, 
we see that \textit{only} $\vc{z}_{h}$ is coupled 
to the density $\vrho_{h}$ and moreover that $\Curl \vc{w}_{h}$, and hence 
$\vc{w}_{h}$, only depends on the data $\vc{f}$. 
More importantly, equipped with the discrete Hodge decomposition, we 
can separate the quantity $\eff(\vrho_{h},\vc{u}_{h})=
P(\vrho_{h}) - (\lambda+\mu)\Div \vc{u}_{h}$ 
from the vorticity. The quanity $\eff(\vrho_{h},\vc{u}_{h})$ is the so-called 
effective viscous flux \cite{Lions:1998ga} associated with our discrete equations.
The fact that we can separate the effective viscous flux from the vorticity makes 
it possible to prove the following weak continuity property:
\begin{equation}\label{eq:intro-weakcont}
	\lim_{h \rightarrow 0}\iint \eff(\vrho_{h},\vc{u}_{h})\,\vrho_{h}\ dxdt 
	=\iint \overline{\eff}\, \vrho \ dxdt
	\quad \text{($\overline{\eff}, \vrho$ are weak $L^2$ limits),}
\end{equation}
which is the decisive ingredient in the proof of strong convergence 
of the density approximations $\vrho_{h}$. Related to \eqref{eq:intro-weakcont}, we prove 
a higher integrability estimate on the pressure ensuring 
that $p(\vrho_{h})$, and thus also $\eff(\vrho_{h},\vc{u}_{h})$, is weakly compact in $L^2$. 
The energy estimate only provides a uniform bound on $p(\vrho_{h})$ in 
$L^\infty(L^1)$, so a priori it is not even clear that $p(\vrho_{h})$ 
converges weakly to an integrable function. Our strong convergence argument is inspired by the 
work of Lions on the compressible Navier-Stokes 
equations, cf.~\cite{Lions:1998ga}.

As part of the analysis, we also show that $\vrho_{h}\vc{u}_{h}$ 
converges weakly to $\vrho \vc{u}$, where $\vrho$ and $\vc{u}$ 
are weak limits of $\vrho_{h}$ and $\vc{u}_{h}$, respectively. 
This convergence is not immediate since the element spaces utilized for 
the velocity approximations are merely div or curl conforming. 
In view of the discrete continuity equation (discontinuous 
Galerkin scheme), we easily obtain a bound on 
$(\vrho_{h})_t$ in, say, $L^1(W^{-1,1})$. To conclude we need 
a spatial translation estimate of the form
\begin{equation}\label{intro:eq3}
	\norm{\vc{u}_{h}-\vc{u}_{h}(\cdot,\cdot + \xi)}_{L^{2}(\vc{L}^2)} 
	\to 0 \quad \text{as $\abs{\xi} \to 0$, uniformly in $h$.}
\end{equation}
In view of the discrete Hodge decomposition, we will actually only need 
\eqref{intro:eq3} for weakly curl 
free approximations with a $\vc{L}^2$ bounded divergence.

For velocity fields that are independent of time $t$, \eqref{intro:eq3} implies the 
$\vc{L}^2$ compactness of $\Set{\vc{u}_{h}}_{h>0}$. In the time independent case, 
it is known that weakly curl free approximations with $\vc{L}^2$ bounded divergence is compact in $\vc{L}^2$
provided the approximation spaces satisfy the commuting diagram property \cite{Snorre}. 
However, despite the fact that the element spaces used here satisfy this 
property, the inclusion of time in $\vc{u}_{h}(t,x)$ makes earlier results inadequate. 
Specifically, to apply known result we would need $L^\infty$ control in time 
of the velocity approximations. Unfortunately, this is not available in general for our problem. 
As a consequence, we shall provide a direct argument 
for the spatial translation estimate \eqref{intro:eq3}.

We wish to point out that although the boundary conditions \eqref{eq:bc-normal}--\eqref{eq:bc-navierslip} are 
not covered by Lions' results \cite{Lions:1998ga}, his proofs can be adapted 
to yield existence, uniqueness, and regularity results for \eqref{eq:contequation}--\eqref{eq:momentumeq} 
with the boundary condtions \eqref{eq:bc-normal}--\eqref{eq:bc-navierslip}. We will 
not pursue this project here, except for the existence part, which will be an immediate 
consequence of our convergence result. However, let us 
remark that the Navier--slip condition \eqref{eq:bc-navierslip} is technically
easier to handle than a Dirichlet condition, both from a mathematical and 
numerical point of view.  The primary reason for this lies in 
the need for solutions of the auxiliary problem
\begin{equation}\label{divvf}
	\Div \vc{v} = \vc{f}, 
	\quad \Curl \vc{v} = 0.
\end{equation}
If $\int_{\Omega}\vc{f}\ dx=0$,  the function $\vc{v}$ 
will satisfy the boundary conditions \eqref{eq:bc-normal}--\eqref{eq:bc-navierslip}. 
In other situations, like periodic boundary conditions or
when the equations are solved on $\mathbb{R}^N$, the boundary 
values of $\vc{v}$ does not matter. However, it is evident 
that $\vc{v}$ cannot be required both to satisfy 
Dirichlet boundary conditions and \eqref{divvf}. 
Thus, \eqref{divvf} can only be required to hold locally whenever Dirichlet 
boundary conditions are imposed. To avoid ``localizing" various 
discrete arguments, which sometimes 
can require elaborate work, we have chosen to consider the 
Navier--slip type condition \eqref{eq:bc-navierslip} 
instead of the no--slip Dirichlet condition.

%*** OUTLINE ***
This paper is organized as follows: In Section \ref{sec:prelim}, we introduce notation and 
list some basic results needed for the later analysis. 
Moreover, we recall the usual notion of weak solution 
and introduce a mixed weak formulation of the velocity equation. 
Finally, we introduce the finite element spaces and review some of their basic properties. 
In Section \ref{sec:numerical-method}, we present the numerical method and state our main 
convergence result. The existence of a solution to the discrete equations is confirmed 
in Section \ref{sec:existence-method}. Section \ref{sec:basic-est} is 
devoted to deriving basic estimates. 
In Section \ref{sec:conv}, we prove the main convergence 
result stated in Section \ref{sec:numerical-method}. 
The proof is divided into several steps (subsections), including 
convergence of the continuity scheme, weak continuity 
of the discrete viscous flux, strong convergence 
of the density approximations, and convergence of the velocity scheme.

\section{Preliminary material}\label{sec:prelim}

\subsection{Some functional spaces and analysis results}
We make frequent use of the divergence and curl operators and 
denote these by $\Div$ and $\Curl$, respectively.
In the 2D case, 
we will denote both the rotation operator taking 
vectors into scalars and the curl operator taking scalars into 
vectors by $\Curl$. 
This confusing but rather standard 
notation greatly simplifies all subsequent arguments allowing identical treatment of
the 2D and 3D cases.  

We will also make use of the spaces
\begin{align*}
	\vc{W}^{\Div, 2}(\Omega) & = 
	\Set{\vc{v} \in \vc{L}^2(\Omega): \Div \vc{v} \in L^2(\Omega)}, \\
	\vc{W}^{\Curl, 2}(\Omega) & = 
	\Set{\vc{v} \in \vc{L}^2(\Omega): \Curl \vc{v} \in \vc{L}^2(\Omega)},
\end{align*}
where $\nu$ denotes the unit outward pointing normal vector on $\partial \Omega$. 
If $\vc{v}\in \vc{W}^{\Div, 2}(\Omega)$ satisfies $\vc{v} \cdot \nu|_{\partial \Omega}=0$, we 
write $\vc{v}\in \vc{W}^{\Div, 2}_{0}(\Omega)$. Similarly, $\vc{v}\in \vc{W}^{\Curl, 2}_0(\Omega)$ 
means $\vc{v}\in\vc{W}^{\Div, 2}(\Omega)$ and $\vc{v} \times \nu|_{\partial \Omega} = 0$. 
In two dimensions, $\vc{w}$ is a scalar function and the 
space $\vc{W}^{\Curl,2}_{0}(\Omega)$ is to be 
understood as $W_{0}^{1,2}(\Omega)$. To define 
weak solutions, we shall use the space
$$
\mcw(\Om) =\Set{\vc{v}\in  \vc{L}^2(\Omega): 
\Div \vc{v} \in L^2(\Omega), 
\Curl \vc{v} \in \vc{L}^2(\Omega), 
\vc{v} \cdot \nu|_{\partial \Omega}=0},
$$
which coincides with $ \vc{W}^{\Div, 2}_{0}(\Omega)\cap \vc{W}^{\Curl, 2}(\Omega)$. 
The space $\mcw(\Om)$ is equipped with the norm $\norm{\vc{v}}_{\mcw}^2=\norm{\vc{v}}_{\vc{L}^2(\Omega)}^2
+\norm{\Div \vc{v}}_{\vc{L}^2(\Omega)}^2+\norm{\Curl \vc{v}}_{\vc{L}^2(\Omega)}^2$. 
It is known that $\norm{\cdot}_{\mcw}$ is equivalent to the $\vc{H}^1$ norm 
on the space $\Set{v\in \vc{H}^1(\Omega): \vc{v} 
\cdot \nu|_{\partial \Omega}=0}$, see, e.g., \cite{Lions:1998ga}.

The space $\mcw(\Omega)$ admits a unique orthogonal Hodge decomposition 
\begin{equation}\label{eq:hodge-cont}
	\mcw(\Omega) = \Curl S(\Omega) + D\Delta^{-1}L^2_{0}(\Omega),
\end{equation}
where $S(\Omega)= \{\vc{v} \in \vc{W}^{1,2}(\Omega): 
\Curl \vc{v} \in \vc{W}^{1,2}(\Omega)\}$, $\Delta^{-1}$ 
is the inverse Neumann Laplace operator, and $L^2_0$ denotes the $L^2$ functions 
on $\Om$ that have zero mean.

For the convenience of the reader we list some basic 
functional analysis results to be used
in the subsequent arguments (for proofs, see, e.g.,\cite{Feireisl:2004oe}). 
Throughout the paper we use overbars to denote weak limits, with the 
underlying spaces being (silently) given by the context.

\begin{lemma}\label{lem:prelim} 
Let $O$ be a bounded open subset of $\R^M$, $M\ge 1$.  
Suppose $g\colon \R\to (-\infty,\infty]$ is a lower semicontinuous 
convex function and $\Set{v_n}_{n\ge 1}$ is a sequence of 
functions on $O$ for which $v_n\weakto v$ in $L^1(O)$, $g(v_n)\in L^1(O)$ for each 
$n$, $g(v_n)\weakto \overline{g(v)}$ in $L^1(O)$. Then 
$g(v)\le \overline{g(v)}$ a.e.~on $O$, $g(v)\in L^1(O)$, and
$\int_O g(v)\ dy \le \liminf_{n\to\infty} \int_O g(v_n) \ dy$. 
If, in addition, $g$ is strictly convex on an open interval
$(a,b)\subset \R$ and $g(v)=\overline{g(v)}$ a.e.~on $O$, 
then, passing to a subsequence if necessary, 
$v_n(y)\to v(y)$ for a.e.~$y\in \Set{y\in O\mid v(y)\in (a,b)}$.
\end{lemma}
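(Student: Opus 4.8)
The plan is to prove this as a standard application of the theory of convex integrands and weak lower semicontinuity, followed by a disintegration/strict-convexity argument for the a.e.\ convergence. The statement is essentially classical (it appears in Feireisl's book, as the excerpt notes), so the proof should be streamlined rather than inventive.

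First I would establish the lower semicontinuity inequality $g(v)\le \overline{g(v)}$ a.e., $g(v)\in L^1(O)$, and $\int_O g(v)\,dy\le\liminf_n\int_O g(v_n)\,dy$. The key tool is that a lower semicontinuous convex $g\colon\R\to(-\infty,\infty]$ is the supremum of an at most countable family of affine functions: there exist sequences $\{a_k\},\{b_k\}\subset\R$ with $g(s)=\sup_k(a_k s + b_k)$ for all $s$. For each fixed $k$, the functional $w\mapsto\int_{A}(a_k w + b_k)\,dy$ is linear and continuous on $L^1(O)$, so from $v_n\weakto v$ in $L^1(O)$ we get, for every measurable $A\subseteq O$, $\int_A (a_k v + b_k)\,dy=\lim_n\int_A(a_k v_n+b_k)\,dy\le\liminf_n\int_A g(v_n)\,dy$. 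Taking the supremum over $k$ on the left (using monotone convergence, noting $g\ge$ the partial suprema which are bounded below on $A$ up to an $L^1$ function) yields $\int_A g(v)\,dy\le\liminf_n\int_A g(v_n)\,dy$ for all measurable $A$. Since $g(v_n)\weakto\overline{g(v)}$ in $L^1(O)$, the right-hand side equals $\int_A\overline{g(v)}\,dy$ when $A$ is replaced by a limit along the full sequence; more precisely, for every measurable $A$, $\int_A g(v)\,dy\le\liminf_n\int_A g(v_n)\,dy\le\int_A\overline{g(v)}\,dy$ — wait, the last inequality needs care since $\liminf\le\lim$ only along convergent sub-subsequences; instead I use that $g(v_n)\weakto\overline{g(v)}$ gives $\int_A g(v_n)\,dy\to\int_A\overline{g(v)}\,dy$ along the full sequence, hence $\int_A g(v)\,dy\le\int_A\overline{g(v)}\,dy$. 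As $A$ is arbitrary and, after subtracting an affine minorant, both integrands are (essentially) nonnegative, this forces $g(v)\le\overline{g(v)}$ a.e.; integrability of $g(v)$ follows since $\overline{g(v)}\in L^1(O)$ and $g(v)$ is bounded below by an $L^1$ function. Taking $A=O$ gives the stated $\liminf$ inequality.

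Next, under the extra hypotheses ($g$ strictly convex on $(a,b)$ and $g(v)=\overline{g(v)}$ a.e.), I would prove the a.e.\ convergence on $E:=\{y\in O: v(y)\in(a,b)\}$. Set $h_n:=g(v_n)-g(v)-g'(v)(v_n-v)$ where $g'(v)$ is any measurable selection of the subdifferential (on $E$, $g$ is differentiable so this is genuine; on $O\setminus E$ it is still a subgradient). Convexity gives $h_n\ge 0$ a.e. Moreover $g'(v)(v_n-v)\weakto 0$ in $L^1$ provided $g'(v)\in L^\infty$, which one arranges by a further exhaustion: write $E=\bigcup_m E_m$ with $E_m:=\{y\in E: v(y)\in(a+\tfrac1m,b-\tfrac1m),\ |g'(v(y))|\le m\}$ and argue on each $E_m$, where $g'(v)$ is bounded; since $v_n\weakto v$ in $L^1$, the continuous linear functional $w\mapsto\int_{E_m}g'(v)\,w\,dy$ gives $\int_{E_m}g'(v)(v_n-v)\,dy\to0$. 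Then $\int_{E_m}h_n\,dy=\int_{E_m}g(v_n)\,dy-\int_{E_m}g(v)\,dy-\int_{E_m}g'(v)(v_n-v)\,dy$; the first term converges (along the full sequence) to $\int_{E_m}\overline{g(v)}\,dy=\int_{E_m}g(v)\,dy$ by the assumption, so $\int_{E_m}h_n\,dy\to0$. As $h_n\ge0$, $h_n\to0$ in $L^1(E_m)$, hence a.e.\ on $E_m$ along a subsequence. Finally, strict convexity of $g$ on $(a,b)$ means: for $y\in E_m$ with $v(y)\in(a+\tfrac1m,b-\tfrac1m)$, $g(s)-g(v(y))-g'(v(y))(s-v(y))\to0$ forces $s\to v(y)$ (the function $s\mapsto g(s)-g(v(y))-g'(v(y))(s-v(y))$ is nonnegative, vanishes only at $s=v(y)$, and is coercive-at-infinity in a way that pins down $v_n(y)$ to a bounded set — one can check $v_n(y)$ cannot escape to where $g$ is affine or to $\pm\infty$ because that would keep $h_n(y)$ bounded away from $0$). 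Therefore $v_n(y)\to v(y)$ for a.e.\ $y\in E_m$. A diagonal argument over $m$, combined with $|E\setminus\bigcup_m E_m|=0$ (since $g'(v)$ is finite a.e.\ on $E$ and $v$ takes values in $(a,b)$ there), gives $v_n\to v$ a.e.\ on $E$ along a single subsequence.

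The main obstacle is the last step — deducing pointwise convergence $v_n(y)\to v(y)$ from $h_n(y)\to0$. One must rule out the possibility that $v_n(y)$ drifts toward a region where $g$ behaves affinely (so that $h_n(y)$ could tend to $0$ without $v_n(y)$ converging), and one must handle the measurable selection of $g'(v)$ and the localization onto sets where it is bounded, so that the weak-$L^1$ convergence of $v_n$ can be tested against it. Both issues are handled by the exhaustion $E=\bigcup_m E_m$ together with the fact that, on the compact interval $[a+\tfrac1m,b-\tfrac1m]\subset(a,b)$, strict convexity is \emph{uniform}: for every $\varepsilon>0$ there is $\delta>0$ with $g(s)-g(t)-g'(t)(s-t)\ge\delta$ whenever $t\in[a+\tfrac1m,b-\tfrac1m]$ and $|s-t|\ge\varepsilon$, which immediately converts $h_n(y)\to0$ into $|v_n(y)-v(y)|\to0$. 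The rest of the argument is routine measure theory and the affine-supremum representation of convex functions.
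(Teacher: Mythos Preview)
The paper does not supply its own proof of this lemma; it is listed among ``basic functional analysis results'' with the remark ``for proofs, see, e.g., \cite{Feireisl:2004oe}'' and is stated without any accompanying argument. There is therefore no paper proof to compare against.

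Your proposal is the standard route to this result and is essentially correct. The first part (affine-supremum representation of $g$, testing against characteristic functions, then passing to the pointwise inequality) is fine; just make explicit the partition trick when you ``take the supremum over $k$'': for finitely many affines you split $A$ according to which affine is maximal and sum, then send the number of affines to infinity by monotone convergence. In the second part your localization $E=\bigcup_m E_m$ to sets where $g'(v)$ is bounded is the right way to legitimize testing the weak $L^1$ convergence against $g'(v)$, and the uniform-strict-convexity claim on compact subintervals of $(a,b)$ is the correct mechanism to convert $h_n(y)\to 0$ into $v_n(y)\to v(y)$. One point worth tightening: your uniform lower bound $\delta$ must hold for \emph{all} $s\in\R$ with $|s-t|\ge\varepsilon$, not just $s$ near $(a,b)$. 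This does hold, because for $t$ in a compact subinterval of $(a,b)$ the slope $g'(t)$ is strictly smaller than the right-derivative at $b$ (and strictly larger than the left-derivative at $a$), so the affine minorant at $t$ is eventually dominated by $g$ with a linear gap as $|s|\to\infty$; combined with lower semicontinuity and a compactness argument in $(t,s)$ this yields the desired uniform $\delta$. With that detail filled in, your argument is complete.
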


Let $X$ be a Banach space and denote by $X^\star$ its dual.  The space
$X^\star$ equipped with the weak-$\star$ topology is denoted by
$X^\star_{\mathrm{weak}}$, while $X$ equipped with the weak topology
is denoted by $X_{\mathrm{weak}}$. By the Banach-Alaoglu theorem, a
bounded ball in $X^\star$ is $\sigma(X^\star,X)$-compact.  If $X$
separable, then the weak-$\star$ topology is metrizable on bounded
sets in $X^\star$, and thus one can consider the metric space
$C\left([0,T];X^\star_{\mathrm{weak}}\right)$ of functions $v:[0,T]\to
X^\star$ that are continuous with respect to the weak topology. We
have $v_n\to v$ in $C\left([0,T];X^\star_{\mathrm{weak}}\right)$ if
$\langle v_n(t),\phi \rangle_{X^\star,X}\to \langle v(t),\phi
\rangle_{X^\star,X}$ uniformly with respect to $t$, for any $\phi\in
X$. The following lemma is a consequence of the Arzel\`a-Ascoli
theorem:

\begin{lemma}\label{lem:timecompactness}
Let $X$ be a separable Banach space, and suppose $v_n\colon [0,T]\to
X^\star$, $n=1,2,\dots$, is a sequence for which 
$\norm{v_n}_{L^\infty([0,T];X^\star)}\le C$, for some constant $C$ independent of $n$. 
Suppose the sequence $[0,T]\ni t\mapsto \langle v_n(t),\Phi \rangle_{X^\star,X}$, $n=1,2,\dots$, 
is equi-continuous for every $\Phi$ that belongs to a dense subset of $X$.  
Then $v_n$ belongs to $C\left([0,T];X^\star_{\mathrm{weak}}\right)$ for every
$n$, and there exists a function $v\in 
C\left([0,T];X^\star_{\mathrm{weak}}\right)$ such that along a 
subsequence as $n\to \infty$ there holds $v_n\to v$ in 
$C\left([0,T];X^\star_{\mathrm{weak}}\right)$.
\end{lemma}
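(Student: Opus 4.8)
The plan is to deduce Lemma~\ref{lem:timecompactness} from the Arzel\`a--Ascoli theorem applied to the metric space $C\left([0,T];X^\star_{\mathrm{weak}}\right)$, after first checking that each $v_n$ actually lives in this space. The uniform bound $\norm{v_n}_{L^\infty([0,T];X^\star)}\le C$ confines the whole sequence to the ball $B_C=\Set{\ell\in X^\star:\norm{\ell}_{X^\star}\le C}$, which by Banach--Alaoglu is $\sigma(X^\star,X)$-compact, and, since $X$ is separable, is metrizable in the weak-$\star$ topology; fix such a metric $d$ on $B_C$. So the $v_n$ are curves in the compact metric space $(B_C,d)$, and the natural arena is $C\left([0,T];(B_C,d)\right)$ with the uniform metric.

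First I would verify continuity of each $v_n$ as a map $[0,T]\to X^\star_{\mathrm{weak}}$. Let $D\subset X$ be the countable (or merely dense) set on which equi-continuity of $t\mapsto \langle v_n(t),\Phi\rangle$ is assumed. For a fixed $\Phi\in D$ the scalar function $t\mapsto\langle v_n(t),\Phi\rangle$ is continuous (indeed the hypothesis gives equi-continuity, in particular continuity, for each $n$). For a general $\phi\in X$ pick $\Phi\in D$ with $\norm{\phi-\Phi}_X$ small; then
\begin{equation*}
	\abs{\langle v_n(t),\phi\rangle-\langle v_n(s),\phi\rangle}
	\le \abs{\langle v_n(t)-v_n(s),\Phi\rangle}
	+2C\norm{\phi-\Phi}_X,
\end{equation*}
so $t\mapsto\langle v_n(t),\phi\rangle$ is continuous, i.e.\ $v_n\in C\left([0,T];X^\star_{\mathrm{weak}}\right)$. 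The same $3\varepsilon$-type estimate, now using the \emph{equi}-continuity on $D$ together with the uniform bound $2C\norm{\phi-\Phi}_X$, shows that the family $\Set{t\mapsto\langle v_n(t),\phi\rangle}_{n}$ is equi-continuous for \emph{every} $\phi\in X$, not just for $\phi\in D$.

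Next I would package this into equi-continuity with respect to the metric $d$ on $B_C$. Since $d$ metrizes the weak-$\star$ topology on the compact set $B_C$, there is a countable family $\Set{\phi_k}\subset X$ such that $d(\ell,\ell')$ is controlled by finitely many of the numbers $\abs{\langle\ell-\ell',\phi_k\rangle}$ up to an arbitrarily small error (concretely one may take $d(\ell,\ell')=\sum_k 2^{-k}\min\Set{1,\abs{\langle\ell-\ell',\phi_k\rangle}}$ after normalizing the $\phi_k$). Given $\varepsilon>0$, choose $K$ with $\sum_{k>K}2^{-k}<\varepsilon$; then equi-continuity of each $t\mapsto\langle v_n(t),\phi_k\rangle$ for $k=1,\dots,K$ yields a single $\delta>0$ with $d(v_n(t),v_n(s))<2\varepsilon$ whenever $\abs{t-s}<\delta$, uniformly in $n$. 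Thus $\Set{v_n}$ is an equi-continuous, pointwise-precompact (values in the compact $(B_C,d)$) family in $C\left([0,T];(B_C,d)\right)$, and Arzel\`a--Ascoli produces a subsequence converging uniformly in $d$ to some $v\in C\left([0,T];(B_C,d)\right)\subset C\left([0,T];X^\star_{\mathrm{weak}}\right)$. Uniform $d$-convergence, unwound through the definition of $d$ and a final density argument in $X$ of exactly the type displayed above, is precisely the statement that $\langle v_n(t),\phi\rangle\to\langle v(t),\phi\rangle$ uniformly in $t$ for every $\phi\in X$, i.e.\ $v_n\to v$ in $C\left([0,T];X^\star_{\mathrm{weak}}\right)$.

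The only genuinely delicate point is the interplay between the three notions of ``closeness'' — the weak-$\star$ topology, the chosen metric $d$, and testing against the dense set $D$ versus all of $X$ — and making sure the uniform bound $C$ is used to absorb the errors when passing between them; everything else is a routine application of Banach--Alaoglu (for metrizability and compactness of $B_C$) and Arzel\`a--Ascoli. No separate compactness in $t$ is needed beyond the given equi-continuity, and no reflexivity or $L^\infty$-in-time-of-derivative hypothesis enters.
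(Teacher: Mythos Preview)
Your proof is correct and follows exactly the route the paper indicates: the paper gives no proof of this lemma beyond the remark that it ``is a consequence of the Arzel\`a--Ascoli theorem,'' and your argument is a careful unpacking of precisely that consequence. The density/$3\varepsilon$ extension from $D$ to all of $X$, the metrization of $B_C$ via Banach--Alaoglu and separability, and the passage to $d$-equicontinuity are the expected ingredients, and you have handled them cleanly.
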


In what follows, we will often obtain a priori estimates for a sequence $\Set{v_n}_{n\ge 1}$ 
that we write as ``$v_n\inb X$'' for some functional space $X$. What this really means is that 
we have a bound on $\norm{v_n}_X$ that is independent of $n$.

\subsection{Topological degree in finite dimensions}
Our numerical method constitutes a nonlinear--implicit discrete problem. 
We will prove the existence of a solution to this problem by 
a topological degree argument \cite{Deimling:1985rt}.

Denote by $d(F,\Om,y)$ the $\mathbb{Z}$--valued (Brouwer) degree of a continuous function 
$F:\bar{\Om} \rightarrow \mathbb{R}^M$ at a point $y\in \mathbb{R}^N\backslash F(\partial S)$ 
relative to an open and bounded set $\Omega \subset \mathbb{R}^M$. 
For notational convenience, let us reformulate the definition of degree so that it 
applies directly in our finite element setting. Indeed, below we define 
$d_{S_{h}}(F,\tilde{S}_{h},q_{h})$ with $F:\tilde{S}_{h} \rightarrow S_{h}$ being 
a continuous finite element mapping, $\tilde{S}_{h}$ being a bounded subset 
of a finite element space $S_{h}$, and 
$q_{h}$ being a function in $S_{h}$.

\begin{definition}\label{def:femtop}
Let $S_{h}$ be a finite element space, $\|\cdot \|$ be a norm on this space, and 
introduce the bounded set
$$
\tilde{S}_{h} = \Set{q_{h} \in S_{h}; \|q_{h}\| \leq C},
$$
where $C>0$ is a constant. Let $\{\sigma_{i}\}_{i=1}^M$ be
a basis such that $\operatorname{span}\{\sigma_{i}\}_{i=1}^M = S_{h}$ and 
define the operator $\Pi_{\mathcal{B}}:S_{h}  \rightarrow \mathbb{R}^M$ by 
$$
\Pi_{\mathcal{B}}q_{h} = (q_{1},q_{2}, \ldots, q_{M}),
\qquad q_{h} = \sum_{i=1}^M q_{i}\sigma_{i}.
$$ 
The degree $d_{S_{h}}(F,\tilde{S}_{h},q_{h})$ of a continuous 
mapping $F:\tilde{S}_{h} \rightarrow S_{h}$ at $q_{h} 
\in S_{h}\backslash F(\partial \tilde{S}_h)$
relative to $\tilde{S}_{h}$ is defined as
\begin{equation*}
	d_{S_{h}}(F,\tilde{S}_{h},q_{h}) 
	= d\left( \Pi_{\mathcal{B}} F(\Pi_{\mathcal{B}}^{-1}), \Pi_{\mathcal{B}}
	\tilde{S}_{h}, \Pi_{\mathcal{B}}q_{h}\right).
\end{equation*}
\end{definition}

The next lemma is a consequence of the properties 
of the degree $d(F,\Omega,y)$, cf.~\cite{Deimling:1985rt}.
\begin{lemma}\label{lem:degree}
Fix a finite element space $S_{h}$, and let  $d_{S_{h}}(F, \tilde{S}_{h},q_{h})$ 
be the associated degree of Definition \ref{def:femtop}. The following properties hold:
\begin{enumerate}
	\item{} $d_{S_{h}}(F,\tilde{S}_{h},q_{h})$ does not depend on the choice of basis for $S_{h}$.
	\item{} $d_{S_{h}}(\mathrm{Id},\tilde{S}_{h}, q_{h}) = 1$.
	\item{} $d_{S_{h}}(H(\cdot,\alpha),\tilde{S}_{h},q_{h}(\alpha))$ is 
	independent of $\alpha \in J:=[0,1]$ for $H\!:\!\tilde{S}_{h}\times J \rightarrow S_{h}$ 
	continuous, $q_h:J \rightarrow S_{h}$ continuous, and 
	$q_{h}(\alpha) \notin H(\partial \tilde{S}_{h},\alpha)$ $\forall \alpha \in [0,1]$.
	\item{}$d_{S_{h}}(F,\tilde{S}_{h},q_{h}) \neq 0 \iff F^{-1}(q_{h}) \neq \emptyset$.
\end{enumerate}
\end{lemma}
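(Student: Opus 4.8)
The statement to prove is Lemma~\ref{lem:degree}, which asserts that the finite-element degree $d_{S_h}$ of Definition~\ref{def:femtop} inherits the four listed properties from the classical Brouwer degree $d(F,\Omega,y)$.

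The plan is to reduce each of the four claims to the corresponding standard property of the Brouwer degree in $\mathbb{R}^M$ via the linear isomorphism $\Pi_{\mathcal{B}}\colon S_h \to \mathbb{R}^M$ furnished by a chosen basis $\{\sigma_i\}_{i=1}^M$. The key preliminary observation is that $\Pi_{\mathcal{B}}$ is a linear homeomorphism, so it maps the bounded open set $\tilde{S}_h$ onto a bounded open set $\Pi_{\mathcal{B}}\tilde{S}_h \subset \mathbb{R}^M$, maps $\partial \tilde{S}_h$ onto $\partial(\Pi_{\mathcal{B}}\tilde{S}_h)$, and conjugates continuous maps $S_h \to S_h$ to continuous maps $\mathbb{R}^M \to \mathbb{R}^M$; moreover $q_h \notin F(\partial\tilde{S}_h)$ if and only if $\Pi_{\mathcal{B}}q_h \notin (\Pi_{\mathcal{B}}F\Pi_{\mathcal{B}}^{-1})(\partial(\Pi_{\mathcal{B}}\tilde{S}_h))$, so the right-hand side of the defining formula in Definition~\ref{def:femtop} is well-defined exactly when the left-hand side is. With this dictionary in hand, each property is essentially a transcription.

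First, for (1), I would let $\{\sigma_i\}$ and $\{\sigma_i'\}$ be two bases with associated coordinate maps $\Pi_{\mathcal{B}}$ and $\Pi_{\mathcal{B}'}$; then $\Pi_{\mathcal{B}'}\circ\Pi_{\mathcal{B}}^{-1} = A$ is an invertible linear map of $\mathbb{R}^M$, and the two candidate values of the degree differ by the change-of-variables/multiplicativity property of the Brouwer degree, $d(A\circ G\circ A^{-1}, A\tilde\Omega, Ay) = \operatorname{sgn}(\det A)^2\, d(G,\tilde\Omega,y) = d(G,\tilde\Omega,y)$; one may also invoke the more elementary fact that the degree is invariant under orientation-preserving homeomorphisms together with the observation that $\det A$ and $\det A^{-1}$ have the same sign, making the composite orientation-preserving. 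For (2), $\Pi_{\mathcal{B}}\circ \mathrm{Id}\circ \Pi_{\mathcal{B}}^{-1} = \mathrm{Id}_{\mathbb{R}^M}$ and $\Pi_{\mathcal{B}} q_h \in \Pi_{\mathcal{B}}\tilde{S}_h$ (an interior point, since $\|q_h\|<C$ is needed for $q_h\notin F(\partial\tilde S_h)$ with $F=\mathrm{Id}$), so $d_{S_h}(\mathrm{Id},\tilde S_h,q_h) = d(\mathrm{Id},\Pi_{\mathcal{B}}\tilde S_h, \Pi_{\mathcal{B}}q_h) = 1$ by normalization of the Brouwer degree. For (3), the map $(\xi,\alpha)\mapsto \Pi_{\mathcal{B}}H(\Pi_{\mathcal{B}}^{-1}\xi,\alpha)$ is a continuous homotopy on $\Pi_{\mathcal{B}}\tilde S_h \times [0,1]$, the path $\alpha\mapsto \Pi_{\mathcal{B}}q_h(\alpha)$ is continuous, and the admissibility hypothesis $q_h(\alpha)\notin H(\partial\tilde S_h,\alpha)$ translates to $\Pi_{\mathcal{B}}q_h(\alpha)$ avoiding the image of $\partial(\Pi_{\mathcal{B}}\tilde S_h)$ under the homotopy; homotopy invariance of the Brouwer degree then gives the claim. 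For (4), $F^{-1}(q_h) = \Pi_{\mathcal{B}}^{-1}\big((\Pi_{\mathcal{B}}F\Pi_{\mathcal{B}}^{-1})^{-1}(\Pi_{\mathcal{B}}q_h)\big)$ is empty iff the preimage in $\mathbb{R}^M$ is empty, and by the solution property of the Brouwer degree the latter is nonempty whenever $d(\Pi_{\mathcal{B}}F\Pi_{\mathcal{B}}^{-1},\Pi_{\mathcal{B}}\tilde S_h,\Pi_{\mathcal{B}}q_h)\neq 0$; conversely if the preimage is empty the degree vanishes by the excision/definition property.

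The only point requiring genuine care is (1): one must make sure that the change of basis is handled correctly, i.e.\ that the two coordinate presentations yield the \emph{same} integer and not merely integers of the same magnitude. This is where the fact that $A$ and $A^{-1}$ appear on opposite sides of $G$ is essential, so that the two sign factors $\operatorname{sgn}\det A$ cancel; equivalently, one uses that the conjugation $G\mapsto A G A^{-1}$ is realized by the orientation-preserving self-homeomorphism of a pair $(\mathbb{R}^M,\mathbb{R}^M\setminus\{y\})$ only after composing with the translation/linear identification carrying $y$ and $\tilde\Omega$ appropriately, and invokes the homeomorphism-invariance of degree. Everything else is a direct translation through $\Pi_{\mathcal{B}}$ of the axioms of the Brouwer degree recorded in \cite{Deimling:1985rt}, so the proof is short once the dictionary in the first step is set up.
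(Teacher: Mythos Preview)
The paper does not supply a proof of this lemma: it simply states that the result ``is a consequence of the properties of the degree $d(F,\Omega,y)$, cf.~\cite{Deimling:1985rt}.'' Your approach---transporting each assertion through the linear coordinate isomorphism $\Pi_{\mathcal B}$ and invoking the corresponding axiom of the Brouwer degree---is exactly what that sentence intends, so the strategies agree and your write-up is just the explicit version of the paper's one-line citation.

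One point does deserve a comment. In item~(4) you prove that $d\neq 0$ implies $F^{-1}(q_h)\neq\emptyset$ (the solution property) and then, as your ``converse,'' that $F^{-1}(q_h)=\emptyset$ implies $d=0$. But this second statement is the contrapositive of the first, not the converse; you have established only the implication $d_{S_h}(F,\tilde S_h,q_h)\neq 0 \Rightarrow F^{-1}(q_h)\neq\emptyset$. The reverse implication, as written in the lemma with ``$\iff$,'' is in fact \emph{false} for the Brouwer degree in general (e.g.\ $F(x)=x^2$ on $(-1,1)$ at $y=\tfrac14$ has two preimages but degree zero), so it cannot be obtained from \cite{Deimling:1985rt}. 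This is a defect in the paper's statement rather than in your argument; in the application (the existence proof in Section~\ref{sec:existence-method}) only the direction you did prove is actually used.
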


\subsection{Weak and renormalized solutions}

\begin{definition}[Weak solutions]\label{def:weak}
We say that a pair $(\vrho,\vc{u})$ of functions constitutes a weak solution
of the semi-stationary compressible Stokes system \eqref{eq:contequation}--\eqref{eq:momentumeq} 
with initial data \eqref{initial} and Navier-slip type boundary conditions 
\eqref{eq:bc-normal}--\eqref{eq:bc-navierslip} 
provided the following conditions hold:

\begin{enumerate}
	\item $(\vrho,\vc{u}) \in L^\infty(0,T;L^\gamma(\Omega))\times L^2(0,T;\mcw(\Om))$;

	\item $\vrho_{t} + \Div (\vrho\vc{u}) = 0$ in the weak 
	sense, i.e, $\forall \phi \in C^\infty([0,T)\times\overline{\Omega})$,
	\begin{equation}\label{eq:weak-rho}
		\int_{0}^T\int_{\Omega}\vrho \left( \phi_{t} + \vc{u}D\phi\right)\ dxdt
		+ \int_{\Omega}\vrho_{0}\phi|_{t=0}\ dx = 0;
	\end{equation}
		
	\item $-\mu \Delta \vc{u} - \lambda D\Div\vc{u} + Dp(\vrho) = \vc{f}$ in 
	the weak sense, i.e, $\forall \vc{\phi} \in \vc{C}^\infty((0,T)\times\overline{\Omega})$ for which 
	$\vc{\phi} \cdot \nu = 0$ on $(0,T)\times \partial \Omega$,
	\begin{equation*}%\label{eq:weak-u}
		\int_{0}^T\int_{\Omega}\mu\Curl \vc{u} \Curl \vc{\phi} 
		+ \left[(\mu + \lambda)\Div \vc{u}-p (\vrho)\right]\Div \vc{\phi}\ dxdt 
		= \int_{0}^T\int_{\Omega}\vc{f}\vc{\phi}\ dxdt.
	\end{equation*}
\end{enumerate}
\end{definition}

For the convergence analysis we shall also need the DiPerna-Lions concept of 
renormalized solutions of the continuity equation.  

\begin{definition}[Renormalized solutions]
\label{renormlizeddef}
Given $\vc{u}\in L^2(0,T;\mcw(\Om))$, we say that $\vrho\in  L^\infty(0,T;L^\gamma(\Omega))$ 
is a renormalized solution of \eqref{eq:contequation} provided 
$$
B(\vrho)_t + \Div \left(B(\vrho)\vc{u}\right) + b(\vrho)\Div \vc{u}=0
\quad \text{in the weak sense on $\cDom$,}
$$
for any $B\in C[0,\infty)\cap C^1(0,\infty)$ with 
$B(0)=0$ and $b(\vrho) := \vrho B'(\vrho) - B(\vrho)$.
\end{definition}

We shall need the following lemma.

\begin{lemma}\label{lemma:feireisl}
Suppose $(\vrho,\vc{u})$ is a weak solution according to Definition \ref{def:weak}.
If $\vrho \in L^2((0,T)\times \Omega))$, then $\vrho$ is a renormalized solution 
according to Definition \ref{renormlizeddef}.
\end{lemma}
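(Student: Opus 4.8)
The plan is to obtain the renormalized identity as the limit of a regularization procedure in the spatial variable, following the classical DiPerna--Lions commutator argument as adapted to the compressible setting by Feireisl and Lions. First I would fix $\vc u\in L^2(0,T;\mcw(\Om))$; since $\mcw(\Om)\hookrightarrow \vc H^1(\Om)$ by the norm equivalence recalled in Section \ref{sec:prelim}, we have $\vc u\in L^2(0,T;\vc H^1(\Om))$, in particular $\Div\vc u\in L^2((0,T)\times\Om)$. By hypothesis $\vrho\in L^2((0,T)\times\Om)\cap L^\infty(0,T;L^\gamma(\Om))$ and $(\vrho,\vc u)$ solves \eqref{eq:contequation} weakly. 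The regularity $\vrho\in L^2$, $\vc u\in L^2\vc H^1$, and $\vrho\vc u\in L^2(0,T;L^q(\Om))$ for a suitable $q>1$ (interpolating $\vrho$ in $L^\gamma$ with $L^2$ and using the Sobolev embedding for $\vc u$) is enough to make sense of every term below and to justify the limiting process.

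Next I would mollify: let $\vrho^\varepsilon = \vrho \ast \omega_\varepsilon$ be a spatial convolution with a standard mollifier (interior regularization on a slightly smaller domain, or a treatment up to the boundary using that $\vc u\cdot\nu=0$; I would follow Feireisl's book \cite{Feireisl:2004oe} here, which is the reference already cited for Lemma \ref{lem:prelim}). Convolving the weak continuity equation against $\omega_\varepsilon$ gives
\begin{equation*}
	\partial_t \vrho^\varepsilon + \Div(\vrho^\varepsilon \vc u) = r_\varepsilon,
	\qquad r_\varepsilon := \Div(\vrho^\varepsilon \vc u) - \bigl(\Div(\vrho\vc u)\bigr)\ast\omega_\varepsilon,
\end{equation*}
an identity that now holds pointwise a.e.\ in $t$ with $\vrho^\varepsilon$ smooth in $x$, so one may legitimately multiply by $B'(\vrho^\varepsilon)$ and use the chain rule to get
\begin{equation*}
	\partial_t B(\vrho^\varepsilon) + \Div\bigl(B(\vrho^\varepsilon)\vc u\bigr) + b(\vrho^\varepsilon)\Div\vc u = B'(\vrho^\varepsilon) r_\varepsilon,
\end{equation*}
in the sense of distributions on $\cDom$. (For general $B\in C[0,\infty)\cap C^1(0,\infty)$ with $B(0)=0$ one first argues with $B$ having bounded derivative, then removes the restriction by a truncation/monotone-convergence argument exploiting $\vrho\in L^\infty(0,T;L^\gamma)$ to control $B(\vrho)$ and $b(\vrho)$.)

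The heart of the matter is the Friedrichs commutator lemma: $r_\varepsilon \to 0$ in $L^1_{\mathrm{loc}}$ (indeed in $L^{r}$ for some $r>1$) as $\varepsilon\to 0$, which holds because $\vrho\in L^2$ and $\vc u\in L^2\vc H^1$ — exactly the DiPerna--Lions hypotheses. This is the step I expect to be the main obstacle, both because the commutator estimate is the delicate analytic point and because near $\partial\Om$ one must handle the mollification carefully; the saving grace is the boundary condition $\vc u\cdot\nu=0$, which is what makes the naive interior argument extendable to the whole domain (this is precisely the structural reason the Navier-slip framework is convenient, as the authors emphasize in the introduction). Granting $r_\varepsilon\to 0$, and since $B'(\vrho^\varepsilon)$ stays bounded (for bounded-derivative $B$) while $\vrho^\varepsilon\to\vrho$ a.e.\ and in $L^2$, each term in the mollified renormalized equation converges to its unmollified counterpart: $B(\vrho^\varepsilon)\to B(\vrho)$ and $b(\vrho^\varepsilon)\to b(\vrho)$ in $L^1_{\mathrm{loc}}$, $B(\vrho^\varepsilon)\vc u\to B(\vrho)\vc u$ in $L^1_{\mathrm{loc}}$, and $B'(\vrho^\varepsilon)r_\varepsilon\to 0$. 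Passing to the limit yields
\begin{equation*}
	B(\vrho)_t + \Div\bigl(B(\vrho)\vc u\bigr) + b(\vrho)\Div\vc u = 0
\end{equation*}
in $\Dp(\cDom)$, with the correct initial trace $B(\vrho_0)$ inherited from the weak formulation \eqref{eq:weak-rho} of the original equation (the time-continuity needed to make sense of the initial value follows from the equation itself, $B(\vrho)\in C([0,T];L^1_{\mathrm{weak}})$ via Lemma \ref{lem:timecompactness} applied with $X=C(\overline\Om)$). Finally, a truncation argument removes the auxiliary boundedness assumption on $B'$, completing the proof that $\vrho$ is a renormalized solution in the sense of Definition \ref{renormlizeddef}.
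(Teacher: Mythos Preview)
Your argument is correct in outline: the DiPerna--Lions mollification and commutator estimate, under the regularity $\vrho\in L^2$ and $\vc u\in L^2(0,T;\vc H^1)$, do yield the renormalized formulation, and you correctly identify the boundary as the only delicate point. However, the paper handles that point by a cleaner device than direct mollification on $\Om$. It first uses the Lipschitz regularity of $\pOm$ to extend $\vc u(t)$ to an $\tilde{\vc u}(t)\in \vc H^1(\R^N)$ with controlled norm, extends $\vrho$ by zero to $\tilde\vrho=\vrho\,\vc 1_\Omega$, and observes that the boundary condition $\vc u\cdot\nu|_{\pOm}=0$ makes $(\tilde\vrho,\tilde{\vc u})$ a weak solution of the continuity equation on all of $[0,T)\times\R^N$. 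At that stage the classical Lions/DiPerna--Lions lemma on $\R^N$ applies as a black box, with no boundary to worry about. Your approach and the paper's rest on the same analytic engine; the difference is that the paper packages the boundary treatment into a single extension step, whereas you would have to carry the mollification argument near $\pOm$ by hand (your parenthetical ``treatment up to the boundary using $\vc u\cdot\nu=0$'' is in effect the extension trick, just not made explicit).
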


\begin{proof}
Let $(\vrho,\vc{u})$ be a weak solution. Then $\vc{u}\in L^2(0,T;\vc{H}^1(\Om))$.
As the boundary of $\Om$ is Lipschitz, the velocity field $\vc{u}(t)$ can be 
extended to the full space $\R^3$ such that $\tilde{\vc{u}}(t)|_{\Om}=\vc{u}(t)$ and 
$$
\norm{\tilde{\vc{u}}}_{L^2(0,T;\vc{H}^1(\R^N))}
\le C(\Om)\norm{\vc{u}}_{L^2(0,T;\vc{H}^1(\Om))},
$$
where $\tilde{\vc{u}}(t)$ denotes the extension of $\vc{u}(t)$. 
If we extend $\vrho(t)$ to $\R^N$ by setting  
$\tilde{\vrho}(t)=\vrho(t)\vc{1}_{\Omega}$, we get
$$
\tilde{\vrho}_{t} + \Div (\tilde{\vrho}\,\tilde{\vc{u}}) = 0
\quad \text{in the weak sense on $[0,T)\times\R^3$.}
$$
Now, to conclude the proof, we appeal to a well-known 
lemma from \cite{Lions:1998ga} stating that the square-integrable 
weak solution $\tilde{\vrho}$ is also a renormalized solution.
\end{proof}

\subsection{A mixed formulation}

In view of the Navier-slip boundary condition \eqref{eq:bc-navierslip}, it 
is natural to introduce the vorticity $\vc{w} = \Curl \vc{u}$ as an independent 
variable, thereby turning the velocity equation into \eqref{eq:vorticity-form}. 
This immediately leads to the following mixed formulation, which acts as a 
motivation for our choice of numerical method: Determine functions 
$$
(\vc{w},\vc{u}) \in L^2(0,T;\vc{W}_{0}^{\Curl, 2}(\Omega))
\times L^2(0,T;\vc{W}^{\Div, 2}_{0}(\Omega))
$$
such that
\begin{equation}\label{def:mixed-weak}
	\begin{split}
		& \int_{0}^T\int_{\Omega}\mu\Curl \vc{w} \vc{v}
		+ \left[(\mu + \lambda)\Div \vc{u} - p(\vrho)\right]\Div \vc{v}\ dxdt
		= \int_{0}^T\int_{\Omega}\vc{f}\vc{v}\ dxdt, \\
		& \int_{0}^T\int_{\Omega} \vc{w}\vc{\eta}-\Curl \vc{\eta} \vc{u}\ dxdt = 0, 
	\end{split}
\end{equation}
for all $(\eta,\vc{v}) \in  L^2(0,T;\vc{W}_{0}^{\Curl, 2}(\Omega))
\times L^2(0,T;\vc{W}^{\Div, 2}_{0}(\Omega))$. 

In order to arrive at the weak formulation \eqref{def:mixed-weak}, we have utilized 
the integration by parts formula
\begin{equation}\label{eq:integrationbyparts}
	\int_\Om \vc{\eta}\Curl \vc{u}~dx = \int_\Om \vc{u} \Curl \vc{\eta}~dx + \int_{\partial \Om} \vc{u} (\vc{\eta} \times \nu)~dS(x).
\end{equation}
It follows as an immediate consequence of the Stokes Theorem and will be applied 
multiple times throughout the paper.

The upcoming goal is to prove that a sequence of approximate solutions, denoted by 
$\Set{(\vrho_h,\vc{w}_h,\vc{u}_h)}_{h>0}$, converge 
to a limit $(\vrho,\vc{w},\vc{u})$ satisfying 
\eqref{eq:weak-rho} and \eqref{def:mixed-weak}; the term ``converge" is 
made precise in a forthcoming section. Having constructed such
a limit, it follows immediately that the pair $(\vrho,\vc{u})$ is a weak solution 
according to Definition \ref{def:weak}, thereby completing the analysis.

\subsection{Finite element spaces and some basic results}
Upon inspection of the spatial spaces entering the weak formulations stated 
above, we see that they can be related through a De Rham sequence. 
In two dimensions this reads
\begin{equation*}
{\small 
	\begin{CD}
		0 @> \subset >> W^{1,2}_{0} @> \Curl\ >> \vc{W}^{\Div, 2}_{0}@> 
		\Div\ >> L^2_{0} @> >> 0,
	\end{CD}
}
\end{equation*}
while in three dimensions the corresponding sequence is
\begin{equation*}
{\small 
\begin{CD}
0 @> \subset >> W^{1,2}_{0} @> \operatorname{grad} >> 
\vc{W}^{\Curl,2}_{0} @> \Curl\ >> \vc{W}^{\Div, 2}_{0} @> \Div\ >> L^2_{0} @> >> 0.
\end{CD}
}
\end{equation*}
These sequences are exact in the sense that the null 
space of one operator exactly matches the image of the next.
This perspective on the spaces is actually useful as we see that this is precisely how the 
quantities $\vc{w}, \vc{u}$, and $\vrho$ relate to each other. 

It follows from \eqref{def:mixed-weak} that the vorticity $\vc{w}$ is 
decoupled from the density $\vrho$, which is an important consequence 
of our choice boundary condition and this fact is 
of relevance to the convergence analysis. 
Moreover, the subsequent analysis 
relies heavily on the solvability of 
the problem (or more precisely a discrete version of it)
$$
\Div \vc{v} = q, \quad \vc{v}\cdot \nu = 0 \quad \textrm{on $\pOm$},
$$
for some given right-hand side $q$ in $L^2$. In particular, it 
is important for us to extract from this problem some control on $\Curl \vc{v}$.  
From the above De Rham sequence, we see 
immediately that there exists solution $\vc{v}$ which is weakly curl free. In the continuous 
setting this is enough to conclude that $\Curl \vc{v} = 0$; indeed, 
the Hodge decomposition \eqref{eq:hodge-cont} combined with the fact that $\vc{v}$ is weakly
curl free implies $v = Ds$ for some scalar $s$. %cf.~\cite{Girault:1986fu} for details.  

Motivated by these remarks, we shall in the next section present a 
numerical method that utilizes finite element spaces satisfying a 
discrete version of the above De Rham sequence. More precisely, we will replace 
$\vc{W}^{\Curl, 2}_0$ and $\vc{W}^{\Div, 2}_{0}$ by the lowest order
N{\'e}d{\'e}lec finite element spaces of the first kind (but other spaces are possible)
with vanishing degrees of freedom at the boundary $\partial \Om$. Let us 
denote these spaces by $\vc{W}_{h}$ and $\vc{V}_{h}$ respectively. 
It is well known that the spaces $\vc{W}_{h},\vc{V}_{h}$ together 
with the space $Q_{h}$ of piecewise constants (cf.~the ensuing section 
for missing details) satisfies in three dimensions the following 
exact discrete De Rham sequence:
\begin{equation*}
	{\small 
	\begin{CD}
		0 @> \subset >> S_{h} @> 
		\operatorname{grad} >> \vc{W}_h @> \Curl\ >> \vc{V}_h @> 
		\Div\ >> Q_{h}\cap L^2_0(\Om) @> >> 0,
	\end{CD}}
\end{equation*}
where $S_{h}$ is the usual scalar linear Lagrange element. In the two 
dimensional case this sequence still holds, but now the 
spaces $S_{h}$ and $\vc{W}_{h}$ are equal and thus the sequence does
not contain the gradient operator.  All finite element spaces are defined with 
respect to a given tetrahedral mesh $E_{h}$ of $\Omega$. 

We introduce the canonical interpolation operators: 
\begin{equation*}
	\begin{split}
	&\Pi_{h}^S:W^{1,2}_{0}\cap ~W^{2,2} \rightarrow S_{h}, \quad 
	\Pi_{h}^W:\vc{W}^{\Curl,2}_{0}\cap ~\vc{W}^{2,2} \rightarrow \vc{W}_{h}, \\
	&\Pi_{h}^V: \vc{W}_{0}^{\Div, p}\cap \vc{W}^{1,2} \rightarrow \vc{V}_{h}, \quad 
	\Pi_{h}^Q: L^2_{0} \rightarrow Q_{h},	
\end{split}
\end{equation*}
using the available degrees of freedom of the involved spaces. That is,
the operators are defined
\begin{equation*}
	\begin{split}
		\left(\Pi_h^S s\right)(x_i) &= s(x_i), \quad \forall x_i \in \mathcal{N}_h; \\
		\int_e \left(\Pi_h^W \vc{w}\right)\times \nu~dS(x) 
			&= \int_e  \vc{w}\times \nu~dS(x), 
				\quad \forall e \in \mathcal{E}_h;\\
		\int_\Gamma \left(\Pi_h^V \vc{v}\right)\cdot \nu~dS(x) &= \int_\Gamma \vc{v} \cdot \nu~dS(x), 
				\quad \forall \Gamma \in \Gamma_h;\\
		\int_E \Pi_h^Q q~dx &= \int_E q ~dx, \quad \forall E \in E_h,
	\end{split}
\end{equation*}
where $\Gamma_h$, $\mathcal{E}_h$, and $\mathcal{N}_h$, denote the set of faces, edges, and vertices, respectively, of $E_h$.
Then it is well known that the following diagram commutes:
\begin{equation*}
	{\small 
	\begin{CD}
		 W^{1,2}_{0}\cap W^{2,2} @> 
		\operatorname{grad} >> \vc{W}^{\Curl,2}_{0}\cap ~\vc{W}^{2,2} @> \Curl\ >> 
		\vc{W}^{\Div, p}_{0}\cap \vc{W}^{1,2} @> \Div\ >> \vc{L}^2_{0}\\
		 @V \Pi_{h}^SVV    @V\Pi_{h}^WVV   @V\Pi_{h}^{V}VV          
		@V\Pi_{h}^QVV  \\ 
		S_{h} @>\operatorname{grad} >> \vc{W}_{h} @> \Rot\ >> 
		\vc{V}_{h} @> \Div\ >> Q_{h}.
	\end{CD}
	}
\end{equation*}

\begin{remark}
	The interpolation operators $\Pi_h^S$, $\Pi_h^W$, and $\Pi_h^V$, are 
	defined on function spaces with enough regularity to ensure that the 
	corresponding degrees of freedom are functionals on these spaces. 
	This is reflected in writing $\vc{W}^{\Curl,2}_{0}\cap \vc{W}^{2,2}$
	instead of merely $\vc{W}^{\Curl,2}$ and so on.
\end{remark}
In view of the above commuting diagram, we can define the spaces orthogonal to the range of the previous operator, i.e., 
\begin{align*}
	\vc{W}_{h}^{0,\perp} & := \{\vc{w}_{h} \in \vc{W}_{h}; \Curl \vc{w}_{h} =0 \}^\perp\cap \vc{W}_{h}, \\
	\vc{V}_{h}^{0,\perp} & := \{\vc{v}_{h} \in \vc{V}_{h}; \Div \vc{v}_{h} = 0\}^\perp \cap \vc{V}_{h},
\end{align*}
to obtain decompositions
\begin{align}
	\vc{W}_{h} &= DS_{h} +\vc{W}_{h}^{0, \perp}, \nonumber 
	\\
	\vc{V}_{h}  & = \Curl \vc{W}_{h} +\vc{V}_{h}^{0, \perp}, \label{eq:Vh-decomp}
\end{align}
and the discrete Poincar\'e inequalities
\begin{align}
	\label{Poincare1}
	\norm{\vc{v}_{h}}_{\vc{L}^2(\Omega)} & \leq 
	C\norm{\Div \vc{v}_{h}}_{L^2(\Omega)},
	\quad \forall \vc{v} \in \vc{V}_{h}^{0,\perp},\\
	\label{Poincare2}
	\norm{\vc{w}_{h}}_{\vc{L}^2(\Omega)} 
	& \leq C\norm{\Curl \vc{w}_{h}}_{L^2(\Omega)},
	\quad \forall \vc{w} \in \vc{W}_{h}^{0,\perp}.
\end{align}	
Thus, with this configuration of elements we are able to perform unique 
Hodge type decompositions of the discrete vector fields. 
As an example, we immediately have
the existence of a 
function $\vc{v}_{h} \in \vc{V}^{0,\perp}_{h}$ satisfying
\begin{equation*}
	\Div \vc{v}_{h}|_E = q_{h}|_E, ~\forall E \in E_h,
\end{equation*}
for any given $q_h \in Q_h\cap \Set{\int_\Om q_h~dx = 0}$.

The following lemma summarizes well--known error estimates
satisfied by the interpolation operators. 
The estimates are derived from the
Bramble--Hilbert lemma using scaling arguments. 
We however note that care must be taken when mapping functions in $\vc{W}_h$ 
and $\vc{V}_h$ to a reference element (cf. \cite{Brezzi:1991lr,Nedelec:1980ec}).

\begin{lemma}\label{lemma:interpolation}
There exists a constant $C>0$, depending only on the shape 
regularity of $E_h$ and the size of $\Om$, such that for any $1\leq p<\infty$,
\begin{align*}
	& \norm{\phi - \Pi_h^Q\phi}_{L^p(\Om)} \leq C h\norm{D\phi}_{\vc{L}^p(\Om)}, \\
	& \norm{\vc{v} - \Pi_h^V \vc{v}}_{\vc{L}^p(\Om)}
	+h\norm{\Div (\vc{v} - \Pi_h^V \vc{v})}_{L^p(\Om)}
	\leq C h^{s}\norm{D^{s}\vc{v}}_{\vc{L}^p(\Om)}, \quad r=1,2, \\
	& \norm{\vc{w} - \Pi_h^W \vc{w}}_{\vc{L}^p(\Om)}
	+ h\norm{\Curl (\vc{w} - \Pi_h^W \vc{w})}_{\vc{L}^p(\Om)} 
	\leq C h^{s}\|D^{s}\vc{w}\|_{\vc{L}^p(\Om)}, \quad s=1,2,
\end{align*}
for all $\phi \in W^{1,p}(\Om),\vc{v}\in W^{s,p}(\Om)$, 
and $\vc{w} \in W^{2,p}(\Om)$.
\end{lemma}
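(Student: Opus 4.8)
The plan is to prove each estimate locally on a mesh element $E\in E_h$ by pulling back to a fixed reference element $\hat E$, applying the Bramble--Hilbert lemma on $\hat E$, and then transporting the resulting bound back to $E$ while carefully tracking the powers of $h$ produced by the affine change of variables $F_E(\hat{\vc{x}})=B_E\hat{\vc{x}}+b_E$; a concluding $\ell^p$-summation over $E\in E_h$ then yields the stated global estimates. Throughout, the constants that appear will depend only on the reference element and on $\|B_E\|\le Ch$, $\|B_E^{-1}\|\le Ch^{-1}$, $|\det B_E|\sim h^N$, the last three being controlled uniformly by the shape regularity of $E_h$.

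For the piecewise-constant projection $\Pi_h^Q$ the argument is the classical one: on each $E$, $\Pi_h^Q$ is the mean-value projection onto constants, hence it reproduces constants and is $L^p(E)$-stable uniformly in $E$; the Bramble--Hilbert lemma on $\hat E$ gives $\|\hat\phi-\hat\Pi^Q\hat\phi\|_{L^p(\hat E)}\le C|\hat\phi|_{W^{1,p}(\hat E)}$, and affine scaling converts this into $\|\phi-\Pi_h^Q\phi\|_{L^p(E)}\le Ch\|D\phi\|_{\vc{L}^p(E)}$. The care alluded to before the lemma enters for $\Pi_h^V$ and $\Pi_h^W$: one must map to $\hat E$ using the contravariant Piola transform $\vc{v}\mapsto\hat{\vc{v}}:=(\det B_E)\,B_E^{-1}(\vc{v}\circ F_E)$ for $\vc{V}_h$ and the covariant Piola transform $\vc{w}\mapsto\hat{\vc{w}}:=B_E^{\top}(\vc{w}\circ F_E)$ for $\vc{W}_h$. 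These transforms are designed precisely so that they map $\vc{V}_h|_E$ and $\vc{W}_h|_E$ onto the reference spaces, preserve the face normal-trace moments, respectively the edge tangential-trace moments, so that interpolation commutes with pull-back, $\widehat{\Pi_h^V\vc{v}}=\hat\Pi^V\hat{\vc{v}}$ and $\widehat{\Pi_h^W\vc{w}}=\hat\Pi^W\hat{\vc{w}}$, and intertwine $\Div{}$ and $\Curl{}$ with their reference counterparts.

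On $\hat E$ the reference interpolation operators $\hat\Pi^V,\hat\Pi^W$ are bounded on $W^{s,p}(\hat E)$ and reproduce the relevant polynomial spaces, so Bramble--Hilbert gives $\|\hat{\vc{v}}-\hat\Pi^V\hat{\vc{v}}\|_{W^{1,p}(\hat E)}\le C|\hat{\vc{v}}|_{W^{s,p}(\hat E)}$ for $s=1,2$, and likewise $\|\hat{\vc{w}}-\hat\Pi^W\hat{\vc{w}}\|_{W^{1,p}(\hat E)}\le C|\hat{\vc{w}}|_{W^{2,p}(\hat E)}$ — note that the $\vc{W}_h$ case forces $\vc{w}\in\vc{W}^{2,p}$, since the lowest-order edge moments are not functionals on $\vc{W}^{1,p}$ for small $p$, which is exactly why the lemma is stated with $\vc{w}\in W^{2,p}$. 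Transporting back to $E$ via the chain rule and the Piola identities, together with the bounds on $B_E$ above, converts $\|\hat{\vc{v}}-\hat\Pi^V\hat{\vc{v}}\|_{L^p(\hat E)}$ into $\|\vc{v}-\Pi_h^V\vc{v}\|_{\vc{L}^p(E)}$ with the factor $h^s$, and similarly for $\vc{w}$; raising to the $p$-th power and summing over $E\in E_h$ gives the $\vc{L}^p$ terms on the left. For the $\Div{}$ and $\Curl{}$ terms one may either include them in the reference graph-norm estimate above, or — more cheaply, now that the commuting diagram is available — write $\Div(\vc{v}-\Pi_h^V\vc{v})=\Div\vc{v}-\Pi_h^Q(\Div\vc{v})$ and apply the already-proved $\Pi_h^Q$ estimate to $\Div\vc{v}\in W^{s-1,p}$, and analogously $\Curl(\vc{w}-\Pi_h^W\vc{w})=\Curl\vc{w}-\Pi_h^V(\Curl\vc{w})$. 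The main obstacle is purely the bookkeeping of the two Piola transforms: ensuring that interpolation genuinely commutes with the change of variables and that the Jacobian factors combine to give exactly the claimed powers of $h$. Once that is carried out, everything else is the standard scaling machinery.
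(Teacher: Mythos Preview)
Your proposal is correct and matches the paper's approach exactly: the paper does not give a detailed proof but simply states that the estimates ``are derived from the Bramble--Hilbert lemma using scaling arguments,'' noting that ``care must be taken when mapping functions in $\vc{W}_h$ and $\vc{V}_h$ to a reference element'' and citing Brezzi--Fortin and N{\'e}d{\'e}lec for the Piola-transform bookkeeping. Your outline fills in precisely those details, including the use of the commuting diagram to reduce the $\Div{}$ and $\Curl{}$ error terms to the already-established $\Pi_h^Q$ and $\Pi_h^V$ estimates.
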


In what follows, we will need the following lemma. 
It follows from scaling arguments and the equivalence of 
finite dimensional norms.
\begin{lemma}\label{lemma:inverse}
There exists a constant $C>0$, depending only on the shape 
regularity of $E_h$, such that for $1\leq q,p \leq \infty$,
and $r= 0,1$,
\begin{equation*}
	\norm{\phi_h}_{W^{r,p}(E)} 
	\leq C h^{-r + \min\{0, \frac{N}{p}-\frac{N}{q}\}}
	\norm{\phi_h}_{L^q(E)}, 
\end{equation*}
for any $E \in E_h$ and all polynomial functions $\phi_h \in \mathbb{P}_k(E)$, $k=0,1,\ldots$.
\end{lemma}

The next result follows from scaling arguments and the trace theorem.
\begin{lemma}\label{lemma:edgebounds}
Fix any $E \in E_{h}$ and let $\phi \in W^{1,2}(E)$ be arbitrary. 
There exists a constant $C>0$, depending only on the shape regularity of $E_h$ such that, 
$$
\|\phi\|_{L^2(\Gamma)} \leq Ch^{-\frac{1}{2}}\left(\|\phi\|_{L^2(E)} + h\|D \phi\|_{\vc{L}^2(E)}\right), \quad \forall \Gamma \in \Gamma_{h}\cap \partial E.
$$
\end{lemma}

% *** Section: Numerical method and main result ****

\section{Numerical method and main result}\label{sec:numerical-method}
In this section we define the numerical method and the state the convergence theorem. The proof 
of this theorem is deferred to subsequent sections.

Given a time step $\Dt>0$, we discretize the time interval $[0,T]$ in 
terms of the points $t^m=m\Dt$, $m=0,\dots,M$, where we assume that $M\Dt=T$.  
Regarding the spatial discretization, we let $\{E_{h}\}_{h}$ be a shape regular 
family of tetrahedral meshes of $\Omega$,  where $h$ is the maximal diameter.
It will be a standing assumption that $h$ and $\Delta t$ are 
related such that $\Delta t = c h$, for some constant $c$. 
By shape regular we mean that there exists a constant 
$\kappa > 0$ such that every $E \in E_{h}$ contains a ball of radius 
$\lambda_{E} \geq \frac{h_{E}}{\kappa}$, where $h_{E}$ is the diameter of $E$. 
Furthermore, we let $\Gamma_{h}$ denote the set of faces in $E_{h}$. 
Throughout the paper, we will use the three dimensional terminology 
(tetrahedron, face, etc.) to denote both the three dimensional 
and the two dimensional case (triangle, edge, etc). 

On each element $E \in E_{h}$, we denote by $Q(E)$ the constants on $E$. 
The functions that are piecewise constant with respect to the elements of a mesh 
$E_{h}$ are denoted by $Q_h=Q_h(\Om)$. Next, on each $E \in E_{h}$, we 
denote by $\vc{W}(E)$ the lowest order space of \emph{curl--conforming N{\'e}d{\'e}lec} 
polynomials of first kind \cite{Nedelec:1980ec}. 
In two dimensions, $\vc{W}(E)$ is the space of linear scalar polynomials on $E$ and
is totally determined by it's value at the vertices of $E$. 
In three dimensions, each member of $\vc{W}(E)$ is of the form
\begin{equation*}
	\vc{a} + \vc{b} \times \begin{pmatrix}x \\ y \\ z \end{pmatrix}, 
	\quad \vc{a}, \vc{b} \in \mathbb{R}^3,
\end{equation*}
and is totally determined by the following degrees of freedom:
$
	\int_e \vc{w} \cdot \tau_e~dS(x)
$
for all edges (not faces) $e$ of the element $E$, where $\tau_e$ is
the unit tangential vector on $e$.

On each element $E \in E_{h}$, we denote by $\vc{V}(E)$ the 
lowest order space of \emph{div--conforming N{\'e}d{\'e}lec} 
polynomials of first kind \cite{Nedelec:1980ec}. In two dimensions, it is 
the Raviart--Thomas polynomial space on $E$. 
Each member of $\vc{V}(E)$ is of the form
\begin{equation*}
	\vc{a} + b\begin{pmatrix}x \\ y \\ z \end{pmatrix}, 
	\quad \vc{a} \in \mathbb{R}^3, b \in \mathbb{R},
\end{equation*}
and is totally determined by the following degrees of freedom: 
$
\int_\Gamma \vc{v} \cdot \nu~dS(x), 
$
for all faces $\Gamma$ of the element $E$, where $\nu$ is a unit
normal vector on $\Gamma$.

The element spaces $\vc{W}_{h}=\vc{W}_{h}(\Omega)$ and $\vc{V}_{h}=\vc{V}_{h}(\Omega)$ 
are formed on the entire mesh $E_{h}$ by matching the 
degrees of freedom of the polynomial space $\vc{W}(E)$ and $\vc{V}(E)$, respectively,  
on each face $\Gamma \in \Gamma_{h}$. 
In addition, we incorporate the boundary conditions by letting the degrees of freedom of the 
spaces $\vc{W}_{h}$ and $\vc{V}_{h}$ vanish at the faces on the boundary.

Before defining our numerical method, we shall need to introduce some 
additional notation related to the discontinuous Galerkin scheme. 
Concerning the boundary $\partial E$ of an element $E$, we write $f_{+}$ 
for the trace of the function $f$ achieved from within the element $E$ 
and $f_{-}$ for the trace of $f$ achieved from outside $E$. 
Concerning a face $\Gamma$ that
is shared between two elements $E_{-}$ and $E_{+}$, we will write $f_{+}$ for 
the trace of $f$ achieved from within $E_{+}$ and $f_{-}$ for the trace
of $f$ achieved from within $E_{-}$. Here $E_{-}$ and $E_{+}$ are 
defined such that $\nu$ points from $E_{-}$ to $E_{+}$, where $\nu$ is 
fixed (throughout) as one of the two possible 
normal components on each face $\Gamma$.
We also write $[f]_{\Gamma}= f_{+} - f_{-}$ for the jump of $f$ 
across the face $\Gamma$, while forward time-differencing 
of $f$ is denoted by $[f^m] = f^{m+1} - f^m$. 
To denote the set of inner faces of $\Gamma_h$ we will use the 
notation
$\Gamma_h^I = \Set{\Gamma \in \Gamma_h; \Gamma \not \subset \partial \Om}$.

Let us now define our numerical method for the semi-stationary 
Stokes system \eqref{eq:contequation}--\eqref{eq:momentumeq} 
augmented with the boundary conditions \eqref{eq:bc-normal} and \eqref{eq:bc-navierslip} (note, however, 
that in the definition below the boundary conditions are built into the 
finite element spaces and not listed explicitly).

\begin{definition}[Numerical method]\label{def:num-scheme}
Let $\Set{\vrho^0_h(x)}_{h>0}$ be a sequence in $Q_{h}(\Omega)$ 
that satisfies $\vrho_h^0>0$ for each fixed $h>0$ 
and $\vrho^0_h\to \vrho_0$ a.e.~in $\Om$ and in $L^1(\Om)$ 
as $h\to 0$. Set $\vc{f}_{h}(t,\cdot)=\vc{f}_{h}^m(\cdot) 
:= \frac{1}{\Delta t}\int_{t^{m-1}}^{t^m} \Pi_h^Q \vc{f}(s,\cdot) \ ds$, 
for $t\in (t_{m-1},t_m)$, $m=1,\ldots,M$.

Now, determine functions 
$$
(\vrho^m_{h},\vc{w}^m_{h},\vc{u}^m_{h}) \in Q_{h}(\Omega)\times\vc{W}_{h}(\Omega)
\times \vc{V}_{h}(\Omega), \quad m=1,\dots,M,
$$
such that for all $\phi_{h} \in Q_{h}(\Omega)$,
\begin{equation}\label{FEM:contequation}
	\begin{split}
		&\int_\Omega \vrho^m_h \phi_{h}\ dx
		-\Delta t\sum_{\Gamma \in \Gamma^I_h}\int_\Gamma \left(\vrho^m_{-}(\vc{u}^{m}_h \cdot \nu)^+
		+\vrho^m_+(\vc{u}^{m}_h \cdot \nu)^-\right)[\phi_{h}]_\Gamma\ dS(x)
		\\ & \qquad = \int_\Omega \vrho^{m-1}_h\phi_{h}\ dx, 
	\end{split}
\end{equation}
and for all $(\vc{\eta}_{h},\vc{v}_{h}) \in \vc{W}_{h}(\Omega)\times \vc{V}_{h}(\Omega)$,
\begin{equation}\label{FEM:momentumeq}
	\begin{split}
		&\int_{\Omega}\mu\Curl \vc{w}^m_{h}\vc{v}_{h} + \left[(\mu + \lambda)\Div \vc{u}^m_{h}
		-p(\vrho^m_{h})\right]\Div \vc{v}_{h}\ dx
		= \int_{\Omega}\vc{f}^m_{h}\vc{v}_{h}\ dx, \\
		&\int_{\Omega}\vc{w}^m_{h}\vc{\eta}_{h} -  \vc{u}^m_{h}\Curl \vc{\eta}_{h} \ dx =0,
	\end{split}
\end{equation}
for $m=1,\dots,M$. 

In \eqref{FEM:contequation}, $(\vc{u}_{h} \cdot \nu)^+=\max\{\vc{u}_{h} \cdot \nu, 0\}$ 
and $(\vc{u}_{h} \cdot \nu)^+ = \min\{\vc{u}_{h} \cdot \nu, 0\}$, so that 
$\vc{u}_{h} \cdot \nu=(\vc{u}_{h} \cdot \nu)^++(\vc{u}_{h} \cdot \nu)^-$, i.e., in the 
evaluation of $\vrho(\vc{u} \cdot \nu)$ at the face $\Gamma$ the 
trace of $\vrho$ is taken in the upwind direction. 
\end{definition}

\begin{remark}\label{rem:E-VS-Gamma}
Using the identity
\begin{align*}
	&\Delta t\sum_{E \in E_{h}}\int_{\binner}\left(\vrho^m_{+}(\vc{u}_{h}^{m} \cdot \nu)^+
	+\vrho^m_{-}(\vc{u}_{h}^{m} \cdot \nu)^-\right)\phi_{h} \ dS(x) \\
	&\qquad 
	= -\Delta t\sum_{\Gamma \in \Gamma^I_{h}}\int_{\Gamma}\left(\vrho^m_{+}(\vc{u}_{h}^{m} \cdot \nu)^+
	+\vrho^m_{-}(\vc{u}_{h}^{m} \cdot \nu)^-\right)[\phi_{h}]_\Gamma \ dS(x).
\end{align*}
we can state \eqref{FEM:contequation} on the following form:
\begin{equation}\label{FEM:contequation-newform}
	\begin{split}
		&\int_\Omega \vrho^m_h \phi_{h}\ dx
		+\Delta t\sum_{E \in E_{h}}\int_{\binner}\left(\vrho^m_{+}(\vc{u}_{h}^{m} \cdot \nu)^+
		+\vrho^m_{-}(\vc{u}_{h}^{m} \cdot \nu)^-\right)\phi_{h} \ dS(x)
		\\ & \qquad = \int_\Omega \vrho^{m-1}_h\phi_{h}\ dx.
	\end{split}
\end{equation}
\end{remark}

For each fixed $h>0$, the numerical solution 
$\Set{(\vrho^m_{h},\vc{w}^m_{h},\vc{u}^m_{h})}_{m=0}^M$
is extended to the whole of $(0,T]\times \Omega$ by setting
\begin{equation}\label{eq:num-scheme-II}
	(\vrho_{h},\vc{w}_{h},\vc{u}_{h})(t)=(\vrho^m_{h},\vc{w}^m_{h},\vc{u}^m_{h}), 
	\qquad t\in (t_{m-1},t_m], \quad m=1,\dots,M.
\end{equation}
In addition, we set $\vrho_{h}(0)= \vrho^0_{h}$. 

Our main result is that, passing if necessary to a subsequence, 
$\Set{(\vrho_{h},\vc{w}_{h},\vc{u}_{h})}_{h>0}$ 
converges to a weak solution. More precisely, there holds

\begin{theorem}[Convergence]\label{theorem:mainconvergence}
Suppose $\vc{f}\in \vc{L}^2(\Dom)$ and $\vrho_0\in L^\gamma(\Om)$,  $\gamma > 1$.
Let $\Set{(\vrho_{h},\vc{w}_{h},\vc{u}_{h})}_{h>0}$ be a sequence of numerical solutions 
constructed according to \eqref{eq:num-scheme-II} and Definition \ref {def:num-scheme}. 
Then, passing if necessary to a subsequence as $h\to 0$, $\vc{w}_{h} \weak \vc{w}$ 
in $L^2(0,T;\vc{W}^{\Curl,2}_{0}(\Om))$, $\vc{u}_{h} \weak \vc{u}$ 
in $L^2(0,T;\vc{W}^{\Div, 2}_{0}(\Omega))$, $\vrho_{h}\vc{u}_{h} \weak \vrho\vc{u}$ 
in the sense of distributions on $\Dom$, and $\vrho_{h} \rightarrow \vrho$
a.e.~in $\Dom$, where the limit $(\vrho,\vc{w}, \vc{u})$ satisfies the mixed formulation
\eqref{def:mixed-weak}, and consequently $(\vrho,\vc{u})$ is also a weak 
solution according to Definition \ref{def:weak}.
\end{theorem}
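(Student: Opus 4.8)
The plan is to follow the classical strategy of Lions and Feireisl for the compressible Navier--Stokes equations, adapted to the discrete mixed/discontinuous Galerkin setting. First I would establish the basic a priori estimates by testing the momentum equation \eqref{FEM:momentumeq} with $\vc{v}_h = \vc{u}_h^m$ and the second relation with $\vc{\eta}_h = \vc{w}_h^m$, and testing the continuity scheme \eqref{FEM:contequation} with a renormalizing test function built from $\phi_h \mapsto P'(\vrho_h^m)$ (piecewise constant), to obtain the discrete energy inequality. This yields, uniformly in $h$: $\vrho_h \inb L^\infty(0,T;L^\gamma(\Om))$, $\vc{u}_h \inb L^2(0,T;\mcw(\Om))$ (using the discrete Poincar\'e inequalities \eqref{Poincare1}--\eqref{Poincare2} and norm equivalence on $\mcw$), $\Curl \vc{w}_h \inb L^2$, and control of the numerical dissipation terms (jump terms in the upwind scheme). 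The discrete Hodge decomposition \eqref{eq:Vh-decomp} is invoked here: writing $\vc{u}_h = \Curl \vc{\eta}_h + \vc{z}_h$, only $\vc{z}_h$ couples to $\vrho_h$, and $\Curl \vc{w}_h$ depends only on $\vc{f}$. Next I would prove the higher integrability estimate: test the momentum equation with $\Pi_h^V$ of a solution $\vc{v}$ of $\Div \vc{v} = \vrho_h^\theta - \text{mean}$, $\Curl \vc{v} = 0$ (solvable discretely by the De Rham sequence and weakly-curl-free lifting), for a small $\theta>0$, to get $p(\vrho_h) \inb L^{1+\theta}(\Dom)$ — so $p(\vrho_h)$ and $\eff(\vrho_h,\vc{u}_h)$ are weakly compact in $L^{1+\theta}$, hence (after possibly shrinking exponents) in $L^1$ with equi-integrability.

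Having the estimates, I would extract weakly convergent subsequences: $\vc{w}_h \weak \vc{w}$, $\vc{u}_h \weak \vc{u}$, $\vrho_h \weakstar \vrho$, $p(\vrho_h) \weak \overline{p(\vrho)}$, $\Div \vc{u}_h \weak \Div \vc{u}$, and consequently $\eff(\vrho_h,\vc{u}_h) \weak \overline{\eff}$. To pass to the limit in the continuity equation I need $\vrho_h \vc{u}_h \weak \vrho \vc{u}$; for this I use Lemma~\ref{lem:timecompactness} applied to $t \mapsto \vrho_h(t)$ (bounded in $L^\gamma$, with $(\vrho_h)_t$ controlled in $L^1(W^{-1,1})$ via the scheme) to get $\vrho_h \to \vrho$ in $C([0,T];L^\gamma_{\mathrm{weak}})$, combined with the spatial translation estimate \eqref{intro:eq3} for the weakly-curl-free part of $\vc{u}_h$ with $L^2$-bounded divergence — this is a compensated-compactness / Aubin--Lions type argument giving strong $L^2$ convergence of $\vc{u}_h$ in space (the curl part carries the divergence, the residual $\Curl\vc{\eta}_h$ part is handled separately, and the mesh-dependent terms vanish by the interpolation estimates of Lemma~\ref{lemma:interpolation}). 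The limit $(\vrho,\vc{w},\vc{u})$ then satisfies the continuity equation weakly and the first limit identity; since $\vrho \in L^{1+\theta}$ we may also invoke a version of Lemma~\ref{lemma:feireisl} to know $\vrho$ is a renormalized solution, once strong convergence is in hand.

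The decisive step is the strong convergence $\vrho_h \to \vrho$ a.e., which proceeds via the effective viscous flux identity \eqref{eq:intro-weakcont}: $\lim_h \iint \eff(\vrho_h,\vc{u}_h)\,\vrho_h = \iint \overline{\eff}\,\vrho$. To prove this I would derive a discrete renormalized continuity equation for $B(\vrho_h)$ with $B(\vrho)=\vrho\log\vrho$ (and variants), commute it against the discrete momentum equation using the decoupling of the vorticity from $\vrho_h$ afforded by the discrete Hodge decomposition — this is exactly what replaces the commutator estimates on Riesz transforms in the continuous proof, and it is clean here precisely because the Navier--slip condition makes the auxiliary problem \eqref{divvf} globally solvable. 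One must carefully track the numerical diffusion terms from the upwind fluxes and the interpolation errors $\vc{u}_h - \Pi_h^V(\cdot)$, showing they vanish as $h\to 0$ using Lemmas~\ref{lemma:interpolation}, \ref{lemma:inverse}, \ref{lemma:edgebounds} and the higher integrability. Given \eqref{eq:intro-weakcont}, monotonicity of $p$ and convexity (Lemma~\ref{lem:prelim}) force $\overline{p(\vrho)\vrho} = \overline{p(\vrho)}\,\vrho$, and then the strict convexity of $\vrho \mapsto \vrho\log\vrho$ together with the renormalized equation (comparing $\overline{\vrho\log\vrho}$ with $\vrho\log\vrho$ via a Gronwall argument on $\iint (\overline{\vrho\log\vrho} - \vrho\log\vrho)$) yields $\vrho_h \to \vrho$ a.e. Finally, with strong density convergence, $p(\vrho_h) \to p(\vrho)$ and $\eff(\vrho_h,\vc{u}_h) \weak P(\vrho) - (\lambda+\mu)\Div\vc{u}$, so passing to the limit in \eqref{FEM:momentumeq} gives \eqref{def:mixed-weak}, completing the proof. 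I expect the main obstacle to be controlling the discrete renormalization and the mesh-error terms in the effective-flux identity — i.e., showing the commutator between "renormalization" and "the discrete momentum operator" closes with the available regularity — since this is where the non-conforming (only div- or curl-conforming) nature of the spaces and the low-order upwind diffusion genuinely complicate the classical argument.
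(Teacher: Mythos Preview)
Your overall architecture matches the paper: energy estimate via the renormalized continuity scheme plus testing \eqref{FEM:momentumeq} with $\vc{u}_h^m$ and $\vc{w}_h^m$, higher integrability, weak limits, convergence of $\vrho_h\vc{u}_h$ via the Hodge splitting and a translation estimate, the effective viscous flux identity, and strong convergence through $\vrho\log\vrho$. Two points need correction.

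First, your higher-integrability test function is weaker than the paper's and, as stated, may not close for $\gamma$ near $1$. The paper tests the momentum scheme with $\vc{v}_h^m\in\vc{V}_h^{0,\perp}$ satisfying $\Div\vc{v}_h^m=\eff(\vrho_h^m,\vc{u}_h^m)-\text{mean}$, which yields $p(\vrho_h)\inb L^2(\Dom)$ directly for every $\gamma>1$ (Lemma~\ref{lemma:higherorderpressure}). Your choice $\Div\vc{v}=\vrho_h^\theta-\text{mean}$ with ``small $\theta$'' gives only $\vrho_h\in L^{\gamma+\theta}$ with $\theta\le\gamma/2$; for small $\gamma$ you would have to bootstrap to reach $L^2$, which you do not mention. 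The $L^2$ bound is not cosmetic: it is what makes $\int\eff\,\vrho_h$ well defined in the effective-flux step and what allows Lemma~\ref{lemma:feireisl} for the limit density.

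Second, you overcomplicate the effective viscous flux identity and misplace the hard work. In the paper (Lemma~\ref{lemma:effectiveflux}) the identity is nearly free: test the momentum scheme with the curl-free lift $\vc{v}_h^m\in\vc{V}_h^{0,\perp}$ of $\vrho_h^m-\Pi_h^Q\vrho$; the vorticity term drops out by orthogonality and one is left with $\int\eff(\vrho_h,\vc{u}_h)(\vrho_h-\vrho)=-\int\vc{f}_h\vc{v}_h+o(1)\to 0$ since $\vc{v}_h\weak 0$. No commutators, no upwind-diffusion bookkeeping in this step; the renormalized scheme enters only afterwards, in the $\vrho\log\vrho$ comparison. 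The genuine technical effort lies instead in (i) the spatial translation estimate for $\vc{z}_h\in\vc{V}_h^{0,\perp}$ (Theorem~\ref{theorem:spacetranslation}), which goes through a Crouzeix--Raviart projection and a nontrivial bound on tangential jumps, and (ii) the \emph{strong} $L^2_{t,x}$ convergence of the curl part $\Curl\vc{\zeta}_h$ (Lemma~\ref{lemma:compactcurlpart}), obtained by a norm-convergence argument exploiting that $\vc{w}_h$ is decoupled from $\vrho_h$ and determined by $\vc{f}$ alone. Your ``handled separately'' for the latter is too vague---this mechanism should be made explicit.
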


This theorem will be an immediate consequence 
of the results stated and proved 
in Sections \ref{sec:existence-method}--\ref{sec:conv}.

\section{Numerical method is well defined}\label{sec:existence-method}
In this section we show that there exists a solution to the discrete problem 
given in Definition \ref {def:num-scheme}.  However, we commence by obtaining a 
positive lower bound for the density, recalling that the 
approximate initial density $\vrho^0_h(\cdot)$ is strictly positive.

\begin{lemma} \label{lemma:vrho-props}
Fix any $m=1,\dots,M$, and suppose $\vrho^{m-1}_{h} 
\in Q_{h}(\Om)$, $\vc{u}^m_{h} \in \vc{V}_{h}(\Om)$ 
are given bounded functions. Then the solution $\vrho^{m}_{h} \in Q_{h}(\Om)$ of 
the discontinuous Galerkin scheme \eqref{FEM:contequation} satisfies
$$
\min_{x \in \Omega}\vrho_{h}^m(x) \geq \min_{x \in \Omega}\vrho_{h}^{m-1}(x)
\left(\frac{1}{1 + \Delta t \|\Div \vc{u}^m_{h}\|_{L^\infty(\Omega)}}\right).
$$
Consequently, if $\vrho^{m-1}_{h}(\cdot)>0$, 
then $\vrho^{m}_{h}(\cdot)>0$.
\end{lemma}

\begin{proof}
Let $\tilde{E} \in E_{h}$ be such that $\vrho_{h}^m\big|_{\tilde{E}} \leq 
\vrho_{h}^m\big|_{E}$ $\forall E \in E_{h}$, and insert 
into \eqref{FEM:contequation-newform} the test 
function $\phi_{h} \in Q_{h}(\Om)$, defined by
$$
\phi_{h}(x) = 
\begin{cases}
	\frac{1}{|\tilde{E}|}, & x \in \tilde{E}, \\
	0, & \textrm{otherwise}.
\end{cases}
$$
Integrating by parts then yields
\begin{align*}
	\vrho_h^m\big |_{\tilde{E}} &= 
	-\frac{\Delta t}{|\tilde{E}|}\int_{\partial \tilde{E}\setminus \partial \Om}\left(\vrho^m_{+}(\vc{u}^m_{h} \cdot \nu)^+
	+ \vrho_{-}^m(\vc{u}_{h} \cdot \nu)^-\right) dS(x) + \vrho_{h}^{m-1}\big|_{\tilde{E}} \\
	&= -\Delta t \left(\vrho^m_{h} \Div \vc{u}_{h}^m\right)\big|_{\tilde{E}}
	- \frac{\Delta t}{|\tilde{E}|}\int_{\partial \tilde{E}\setminus \partial \Om}
	(\vrho_{-}^m - \vrho_{+}^m)(\vc{u}_{h} \cdot \nu)^- dS(x)
	+ \vrho_{h}^{m-1}\big|_{\tilde{E}},
	\\ & \ge  -\Delta t \left(\vrho^m_{h} \Div \vc{u}_{h}^m\right) \big|_{\tilde{E}}
	+ \vrho_{h}^{m-1}\big|_{\tilde{E}}, 
\end{align*}
where we have also used the relation $\vc{u}_{h} \cdot \nu=(\vc{u}_{h} \cdot \nu)^++(\vc{u}_{h} \cdot \nu)^-$, 
$\vrho^m_{h}$ is constant on $\tilde{E}$, and that $\vrho^m_{h}$ 
attains its minimal value on $\tilde{E}$. Consequently,
\begin{equation*}%\label{cracker}
	\vrho_{h}^m|_{\tilde{E}} \geq 
	\vrho_{h}^{m-1}\big|_{\tilde{E}}
	\left(\frac{1}{1 + \Delta t \|\Div \vc{u}^m_{h}\|_{L^\infty(\Omega)}}\right).
\end{equation*}
\end{proof}

We now turn to the existence of solutions to our nonlinear--implicit discrete 
problem. We will apply a topological degree argument, thereby reducing the proof to 
exhibiting a solution to a linear problem.

\begin{lemma}
For each fixed $h > 0$, there exists a solution 
$$
(\vrho^m_{h},\vc{w}^m_{h},\vc{u}^m_{h}) 
\in Q_{h}(\Omega)\times\vc{W}_{h}(\Omega)
\times \vc{V}_{h}(\Omega), \quad 
\vrho^m_h(\cdot)>0, \quad m=1,\dots,M,
$$
to the nonlinear--implicit discrete problem 
posed in Definition \ref {def:num-scheme}. 
\end{lemma}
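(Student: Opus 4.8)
The plan is to solve the discrete system \eqref{FEM:contequation}--\eqref{FEM:momentumeq} for fixed $m$ by induction, treating the map that sends $(\vrho_h^m,\vc{w}_h^m,\vc{u}_h^m)$ to the residual of the scheme as a finite-dimensional vector field and applying the degree machinery of Lemma \ref{lem:degree}. So suppose inductively that $\vrho_h^{m-1}\in Q_h(\Om)$ with $\vrho_h^{m-1}(\cdot)>0$ is given (the base case $m=1$ is the hypothesis on $\vrho_h^0$). The unknown $(\vrho_h,\vc{w}_h,\vc{u}_h)$ ranges over the finite-dimensional space $S_h:=Q_h\times\vc{W}_h\times\vc{V}_h$, and the scheme defines a continuous map $F\colon S_h\to S_h$ (via the Riesz representation of the functionals $\phi_h\mapsto(\text{LHS of }\eqref{FEM:contequation})-\int_\Om\vrho_h^{m-1}\phi_h$ and $(\vc{\eta}_h,\vc{v}_h)\mapsto(\text{LHS of }\eqref{FEM:momentumeq})-\int_\Om\vc{f}_h^m\vc{v}_h$) whose zeros are exactly the solutions we seek. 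Continuity of $F$ is immediate since everything is polynomial in the coefficients except the upwind terms, which are continuous because $s\mapsto s^\pm$ is continuous.

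Next I would introduce the homotopy $H(\cdot,\alpha)$, $\alpha\in[0,1]$, that linearly interpolates between $F$ and a simple reference map whose solvability is obvious. The natural choice is to scale down the nonlinearity: in the momentum block replace $p(\vrho_h^m)$ by $\alpha\,p(\vrho_h^m)$ and $\vc{f}_h^m$ by $\alpha\,\vc{f}_h^m$, and in the continuity block replace the upwind flux by $\alpha$ times itself, so that at $\alpha=0$ the system decouples into $\int_\Om\vrho_h^m\phi_h\,dx=\int_\Om\vrho_h^{m-1}\phi_h\,dx$ (forcing $\vrho_h^m=\vrho_h^{m-1}$) together with the linear homogeneous Stokes-type pair in $(\vc{w}_h^m,\vc{u}_h^m)$, which has only the trivial solution by the discrete Hodge decomposition \eqref{eq:Vh-decomp} and the discrete Poincar\'e inequalities \eqref{Poincare1}--\eqref{Poincare2}: testing with $\vc{v}_h=\vc{u}_h^m$ and $\vc{\eta}_h=\vc{w}_h^m$ gives $\mu\norm{\Curl\vc{w}_h^m}^2$-type control and hence $\vc{u}_h^m=0$, $\vc{w}_h^m=0$. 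Thus $d_{S_h}(H(\cdot,0),\tilde S_h,0)=1$ by properties (1)--(2) of Lemma \ref{lem:degree}, and homotopy invariance (property (3)) will give $d_{S_h}(F,\tilde S_h,0)=1\neq0$, whence $F^{-1}(0)\neq\emptyset$ by property (4).

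To run the homotopy argument I must produce a ball $\tilde S_h=\Set{q_h\in S_h;\ \norm{q_h}\le C}$, with $C$ possibly depending on $h$ and on $\vrho_h^{m-1}$, such that $0\notin H(\partial\tilde S_h,\alpha)$ for every $\alpha\in[0,1]$; equivalently, every solution of $H((\vrho_h,\vc{w}_h,\vc{u}_h),\alpha)=0$ satisfies a uniform (in $\alpha$) bound. This is the a priori estimate and is the main obstacle. For the density part I would first observe that, exactly as in Lemma \ref{lemma:vrho-props} (whose proof used only the structure of \eqref{FEM:contequation-newform}, here with the flux scaled by $\alpha$, which does not affect the sign computation), any solution has $\vrho_h\ge0$, indeed $\vrho_h>0$; then taking $\phi_h\equiv1$ in the $\alpha$-scaled continuity equation gives $\int_\Om\vrho_h\,dx=\int_\Om\vrho_h^{m-1}\,dx$, so that $\norm{\vrho_h}_{L^1(\Om)}$ is bounded independently of $\alpha$, and by equivalence of norms on the finite-dimensional space $Q_h$ this controls $\norm{\vrho_h}$ in any norm (with an $h$-dependent constant, which is harmless here). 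With $\vrho_h$ — hence $\alpha\,p(\vrho_h)$ — bounded, I test the momentum block with $(\vc{\eta}_h,\vc{v}_h)=(\vc{w}_h,\vc{u}_h)$: using the antisymmetry in the second equation to eliminate the $\Curl$ cross-terms, one obtains $\mu\norm{\Curl\vc{w}_h}_{L^2}^2+(\mu+\lambda)\norm{\Div\vc{u}_h}_{L^2}^2\le\big(\norm{\alpha p(\vrho_h)}_{L^2}+\norm{\alpha\vc{f}_h^m}_{L^2}\big)\norm{\Div\vc{u}_h}_{L^2}+\text{(absorbed terms)}$, and then \eqref{Poincare1} controls the $\vc{V}_h^{0,\perp}$-component of $\vc{u}_h$ while \eqref{Poincare2} controls $\vc{w}_h$, and the first equation of \eqref{FEM:momentumeq} together with $\vc{u}_h=\Curl\vc{w}_h+\vc{z}_h$ pins down the remaining component of $\vc{u}_h$. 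This yields a bound on $\norm{(\vrho_h,\vc{w}_h,\vc{u}_h)}$ uniform in $\alpha\in[0,1]$; choosing $C$ strictly larger than this bound guarantees $0\notin H(\partial\tilde S_h,\alpha)$ and completes the argument, with the positivity $\vrho_h^m(\cdot)>0$ already recorded along the way.
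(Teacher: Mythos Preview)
Your approach is essentially the paper's: a topological-degree/homotopy argument reducing to a linear problem at $\alpha=0$. The paper scales only the upwind flux and the pressure (keeping $\vc f_h^m$), so its $\alpha=0$ problem is the inhomogeneous linear mixed system \eqref{eq:linsys1}--\eqref{eq:linsys2}, whose well-posedness it obtains from the Babu\v{s}ka--Brezzi condition; your additional scaling of $\vc f_h^m$ makes the $\alpha=0$ velocity problem homogeneous with only the trivial solution---either route yields nonzero degree. For the a~priori bound uniform in $\alpha$, the paper appeals forward to the full stability estimate \eqref{eq:lastinstab} (renormalized scheme, $P(\vrho)$, etc.), whereas your route---positivity from Lemma~\ref{lemma:vrho-props}, mass conservation $\int_\Om\vrho_h\,dx=\int_\Om\vrho_h^{m-1}\,dx$, then equivalence of norms on the finite-dimensional space $Q_h$---is more direct and perfectly adequate here, since the ball $\tilde S_h$ may depend on $h$.

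Two small corrections. Testing the momentum block with $(\vc\eta_h,\vc v_h)=(\vc w_h,\vc u_h)$ produces $\mu\|\vc w_h\|_{L^2}^2+(\mu+\lambda)\|\Div\vc u_h\|_{L^2}^2$ on the left, not $\|\Curl\vc w_h\|_{L^2}^2$; and the Hodge splitting of $\vc u_h$ is $\vc u_h=\Curl\vc\zeta_h+\vc z_h$ with $\vc\zeta_h\in\vc W_h^{0,\perp}$, not $\Curl\vc w_h+\vc z_h$---then proceed as in \eqref{eq:curlcomponent}--\eqref{eq:stabilityeq4} to control $\|\vc u_h\|_{\vc L^2}$. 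Finally, $H(\cdot,0)$ is not the identity but an affine bijection, so properties (1)--(2) of Lemma~\ref{lem:degree} do not literally give degree $=1$; what you can conclude (and all you need) is degree $=\pm1\neq0$, which suffices for property~(4).
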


\begin{proof}
We argue by induction. Assume for $m=1,\ldots, k-1$ that 
there exists a solution 
$$
(\vrho^m_{h},\vc{w}^m_{h},\vc{u}^m_{h}) 
\in S_h:=Q^+_{h}(\Omega)\times\vc{W}_{h}(\Omega)
\times \vc{V}_{h}(\Omega)
$$
to the discrete problem of Definition \ref {def:num-scheme}. Here and below we denote by 
$Q_{h}^+(\Omega)$ the strictly positive functions in $Q_{h}(\Omega)$. 
Moreover, the norm $\norm{\cdot}$ on $S_h$ is defined by
$\norm{(\vrho_{h},\vc{w}_{h},\vc{u}_{h})}^2=
\norm{\vrho_h}_{L^2(\Om)}^2+\norm{\vc{w}_{h}}_{L^2(\Om)}^2
+\norm{\vc{u}_{h}}_{L^2(\Om)}^2$.

The claim is that we can find a solution for $m=k$:
\begin{equation}\label{eq:k-solution}
	(\vrho^k_{h},\vc{w}^k_{h},\vc{u}^k_{h}) \in S_h.
\end{equation}
To this end, we introduce the mapping 
\begin{align*}
	&H:Q^+_{h}(\Omega)\times\vc{W}_{h}(\Omega) \times \vc{V}_{h}(\Omega)\times [0,1] 
	\rightarrow Q_{h}(\Omega)\times\vc{W}_{h}(\Omega) \times \vc{V}_{h}(\Omega),
	\\ & H(\vrho_{h},\vc{w}_{h},\vc{u}_{h},\alpha) = 
	\left(z_{h}(\alpha), \vc{y}_{h}(\alpha), \vc{x}_{h}(\alpha)\right),
\end{align*}
where the triplet $(z_{h}(\alpha),\vc{y}_{h}(\alpha), \vc{x}_{h}(\alpha))$
is defined by
\begin{align*}
	\int_{\Omega}z_{h}(\alpha) \phi_{h}\ dx &
	= \alpha\sum_{E \in E_{h}}\int_{\binner}\left(\vrho_+ (\vc{u}_{h}\cdot \nu)^+
	+\vrho_{-}(\vc{u}_{h} \cdot \nu)^-\right)\phi_{h}\ dS(x) \\ 
	&\qquad\qquad + \int_{\Omega}\frac{\vrho_{h} - \vrho_{h}^{k-1}}{\Delta t}\phi_{h}\ dx, 
	\quad   \forall \phi_{h} \in Q_{h}(\Omega),
\end{align*}
\begin{align*}
	\int_{\Omega}\vc{x}_{h}(\alpha) \vc{v}_{h}\ dx &
	= \int_{\Omega} \mu \Curl \vc{w}_{h} \vc{v}_{h} 
	+ (\lambda + \mu)\Div \vc{u}_{h}\Div \vc{v}_{h}\ dx \\ & 
	\qquad - \alpha \int_{\Omega} P(\vrho_{h})\Div \vc{v}_{h}\ dx
	-\int_{\Omega} \vc{f}_{h}\vc{v}_{h}\ dx, \quad \forall \vc{v}_{h} \in \vc{V}_{h}(\Omega),
\end{align*}
\begin{equation*}
	\int_{\Omega}\vc{y}_{h}\vc{\eta}_{h}\ dx 
	= \int_{\Omega}\vc{w}_{h}\vc{\eta}_{h} - \vc{u}_{h}\Curl \vc{\eta}_{h}\ dx, 
	\quad \forall \vc{\eta}_{h}\in \vc{W}_{h}(\Omega).
\end{equation*}

Solving $H(\vrho_{h}, \vc{u}_{h},\vc{w}_{h},1)=0$ is 
equivalent to finding a solution \eqref{eq:k-solution}  
to the nonlinear--implicit discrete problem posed in Definition \ref {def:num-scheme}. 

Let us fix an arbitrary $\alpha \in [0,1]$, and consider a solution 
$(\vrho_{h}(\alpha), \vc{u}_{h}(\alpha),\vc{w}_{h}(\alpha))$ belonging to 
$Q^+_{h}(\Omega)\times\vc{W}_{h}(\Omega)\times \vc{V}_{h}(\Omega)$ of 
the corresponding problem
$$
H(\vrho_{h}(\alpha), \vc{u}_{h}(\alpha), \vc{w}_{h}(\alpha), \alpha) = 0.
$$ 
We claim that there is a constant $C_{\dagger}>0$, independent of $\alpha$, such that
\begin{equation}\label{eq:alphabound}
	\norm{\vc{w}_{h}(\alpha)}_{\vc{W}^{\Curl ,2}(\Omega)}^2 
	+ \norm{\vc{u}_{h}(\alpha)}^2_{\vc{W}^{\Div,2}(\Omega)} 
	+ \norm{\vrho_{h}(\alpha)}^\gamma_{L^\gamma(\Omega)}
	 \leq C_{\dagger}.
\end{equation}
Indeed, repeating the arguments leading to estimate \eqref{eq:lastinstab} 
in Section \ref{sec:basic-est} we conclude that \eqref{eq:alphabound} holds
with $C_{\dagger} = C \|\vc{f}\|_{L^2(0,T;\vc{L}^2(\Omega))}^2 
+ \|\vrho_{h}^{k-1}\|_{L^\gamma(\Omega)}^\gamma$, where the constant 
$C$ is the constant appearing in \eqref{eq:lastinstab}. 
Here, we have also used that $\Delta t = O(h)$. 
Let $\tilde{S}_{h} \subset S_{h}=Q_{h}^+(\Omega)
\times \vc{V}_{h}(\Omega) \times \vc{W}_{h}(\Omega)$ be a ball 
of sufficiently large radius, cf~\eqref{eq:alphabound}.
Then, since every solution of $H(\vrho_{h}, \vc{u}_{h}, \vc{w}_{h}, \alpha)= 0$ 
lies strictly inside $\tilde{S}_{h}$, 
\begin{equation}\label{eq:notzerobcd}
	0 \notin H(\partial \tilde{S}_{h},\alpha), 
	\qquad \forall \alpha \in [0,1].
\end{equation}

We claim that $H(\cdot, \cdot)$ is continuous on $\tilde{S}_{h} \times [0,1]$.
Let $(\vrho_{h}, \vc{u}_{h}, \vc{w}_{h})\in \tilde{S}_{h}$. By equivalence of norms on 
finite dimensional spaces, the functions $\vrho_{h}, \vc{u}_{h}, \vc{w}_{h}$ 
are bounded on $\Om$.  In view of this and 
Lemma \ref{lemma:inverse}, the claim follows. 

By the virtue of \eqref{eq:notzerobcd} and the 
continuity of $H(\cdot, \cdot)$, we have by Lemma \ref{lem:degree} that
\begin{equation*}%\label{eq:indepofalpha}
	d_{S_{h}}(H(\cdot,\alpha), \tilde{S}_{h}, 0)\quad 
	\textrm{is independent of $\alpha \in [0,1]$}.
\end{equation*}
The proof will be completed by proving that $d_{S_{h}}(H(\cdot,\alpha=0), \tilde{S}_{h}, 0) \neq 0$. 
To see this, observe that the problem $H(\vrho_{h}, \vc{u}_{h}, \vc{w}_{h},0) = 0$ is
equivalent to finding a triplet $(\vrho_{h}, \vc{u}_{h},\vc{w}_{h}) \in \tilde{S}_{h}$ satisfying
\begin{equation}\label{eq:linsys1}
	\int_{\Omega}\vrho_{h}\phi_{h}\ dx 
	= \int_{\Omega} \vrho^{k-1}_{h}\phi_{h}\ dx , 
	\quad \forall \phi_{h} \in Q_{h}(\Omega),
\end{equation}
and
\begin{equation}\label{eq:linsys2}
	\begin{split}
		\int_{\Omega}\mu \Curl \vc{w}_{h} \vc{v}_{h} 
		+ (\lambda + \mu) \Div \vc{u}_{h} \Div \vc{v}_{h}\ dx &
		= \int_{\Omega} \vc{f}_{h}\vc{v}_{h}\ dx, 
		\quad \forall \vc{v}_{h} \in \vc{V}_{h}(\Omega), \\
		\int_{\Omega}\Curl \vc{\eta}_{h}\vc{u}_{h} 
		- \vc{w}_{h}\vc{\eta}_{h}\ dx &= 0, 
		\quad \forall \vc{\eta}_{h} \in \vc{W}_{h}(\Omega).
	\end{split}
\end{equation}
Clearly, \eqref{eq:linsys1} has the solution $\vrho_{h} = \vrho_{h}^{k-1}$. 
Moreover, \eqref{eq:linsys2} 
is a system on mixed form admitting a unique solution provided that 
the finite element spaces satisfy the Babsuka--Brezzi condition. 
However, the commuting diagram property satisfied by our finite element spaces
immediately renders the Babuska--Brezzi condition 
satisfied (cf.~Theorem \ref{theorem:mixedelliptic}). 
\end{proof}

% *** Section: Basic estimates ***

\section{Basic estimates}\label{sec:basic-est}
In this section we establish a few estimates to be used later on, including 
square-integrability of the pressure and weak time-continuity of the density. 
However, we begin with the following lemma providing us with 
a renormalized formulation of the continuity scheme \eqref{FEM:contequation}.

\begin{lemma}[Renormalized continuity scheme]
Fix any $m=1,\ldots,M$ and let $(\vrho_{h}^m, \vc{u}_{h}^{m}) \in Q_{h} \times \vc{V}_{h}$ satisfy the continuity scheme \eqref{FEM:contequation}. 
Then $(\vrho_{h}^m, \vc{u}_{h}^{m})$ also satisfies 
the renormalized continuity scheme
\begin{equation}\label{FEM:renormalized}
	\begin{split}
		&\int_{\Omega}B(\vrho_{h}^m)\phi_{h}\ dx \\
		&\qquad - \Delta t\sum_{\Gamma \in \Gamma^I_{h}} \int_\Gamma \left(B(\vrho^m_-)(\vc{u}^{m}_h \cdot \nu)^+ 
		+ B(\vrho^m_+)(\vc{u}^{m}_h \cdot \nu)^-\right) \jump{\phi_{h}}_\Gamma\ dx \\
		&\qquad + \Delta t \int_{\Omega}b(\vrho_{h}^m)\Div \vc{u}^{m}_{h}\phi_{h}\ dx 
		+  \int_{\Omega}B''(\xi(\vrho_{h}^m, \vrho_{h}^{m-1})) \jump{\vrho_{h}^{m-1}}^2\phi_{h}\ dx  \\
		&\qquad  
		+\Delta t\sum_{\Gamma \in \Gamma^I_{h}}
		\int_{\Gamma}B''(\xi^\Gamma(\vrho^m_{+},\vrho^m_{-}))
		\jump{\vrho^m_{h}}^2_{\Gamma}(\phi_{h})_{-}(\vc{u}_{h}^{m} \cdot \nu)^+ \\
		&\qquad \qquad \qquad \qquad -B''(\xi^\Gamma(\vrho^m_{-},
		\vrho^m_{+}))\jump{\vrho^m_{h}}^2_{\Gamma}(\phi_{h})_{+}(\vc{u}_{h}^{m} \cdot \nu)^-\ dS(x) \\
		&= \int_{\Omega}B(\vrho_{h}^{m-1})\phi_{h}\ dx,
		\qquad \forall \phi_{h} \in Q_{h}(\Omega),
	\end{split}
\end{equation} 
for any $B\in C[0,\infty)\cap C^2(0,\infty)$ with $B(0)=0$ 
and $b(\vrho) := \vrho B'(\vrho) - B(\vrho)$. Given two positive real numbers $a_1$ and $a_2$, we denote by $\xi(a_1,a_2)$ and $\xi^\Gamma(a_1,a_2)$ two numbers 
between $a_1$ and $a_2$ (they will be precisely defined below).
\end{lemma}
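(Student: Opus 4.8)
The plan is to derive the renormalized scheme \eqref{FEM:renormalized} directly from the continuity scheme \eqref{FEM:contequation} by a clever choice of test function, following the standard DiPerna--Lions philosophy adapted to the discrete (finite volume / DG with piecewise constants) setting. Since $\vrho_h^m$ and $\vrho_h^{m-1}$ are piecewise constant, for a convex-like nonlinearity $B$ the chain rule fails exactly, and the discrepancies are precisely the second-order Taylor remainder terms containing $B''$; the whole point of the lemma is to keep track of these remainders with full sign information rather than discard them. So the first step is to test \eqref{FEM:contequation} (or rather its equivalent element-based form \eqref{FEM:contequation-newform}) with the piecewise constant function $\phi_h B'(\vrho_h^m)$ — more carefully, since we want the final identity to hold against an arbitrary $\phi_h\in Q_h$, I would on each element $E$ insert the test function equal to $B'(\vrho_h^m|_E)$ times the appropriate normalized indicator, exploiting that $B'(\vrho_h^m)$ is itself in $Q_h$.

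The second step is the bookkeeping of the three types of quadratic remainders. First, the time-difference term: writing $B(\vrho_h^{m-1}) - B(\vrho_h^m) = B'(\vrho_h^m)(\vrho_h^{m-1}-\vrho_h^m) + \tfrac12 B''(\xi)(\vrho_h^{m-1}-\vrho_h^m)^2$ with $\xi=\xi(\vrho_h^m,\vrho_h^{m-1})$ between the two values by Taylor's theorem with Lagrange remainder, produces the $\int_\Omega B''(\xi)\jump{\vrho_h^{m-1}}^2\phi_h$ term (note the use of $\jump{\vrho^{m-1}}=\vrho^m-\vrho^{m-1}$, the forward time-difference notation). Second, the convective flux term: on a face $\Gamma$ the upwind flux of $\vrho_h^m$ times $B'(\vrho_h^m)$ must be compared with the upwind flux of $B(\vrho_h^m)$; here one Taylor-expands $B(\vrho^m_\mp)$ around the value $B(\vrho^m_\pm)$ on the other side — this is where $\xi^\Gamma(a_1,a_2)$ enters — and the leftover quadratic terms, weighted by the positive/negative parts of $\vc u_h^m\cdot\nu$ and by the one-sided traces $(\phi_h)_\pm$, give exactly the face-sum with the two $B''(\xi^\Gamma)$ contributions and the indicated signs. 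Third, reassembling the linear-in-$B'$ pieces: the element mass term gives $\int B(\vrho_h^m)\phi_h$, and the discrete ``transport'' piece $\vrho_h^m B'(\vrho_h^m)\Div\vc u_h^m$ combined with $-B(\vrho_h^m)\Div\vc u_h^m$ produces the $\Delta t\int b(\vrho_h^m)\Div\vc u_h^m\phi_h$ term via the definition $b(\vrho)=\vrho B'(\vrho)-B(\vrho)$ and the identity from Remark \ref{rem:E-VS-Gamma} relating the face sum to $\vrho_h^m\Div\vc u_h^m$ plus a jump correction (which is where part of the face quadratic term actually comes from).

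The main obstacle I anticipate is getting the face (convective) terms exactly right: one has to be scrupulous about which trace is taken in which direction, how the upwind choice interacts with the sign of $\vc u_h^m\cdot\nu$, and how the jump in the test function $[\phi_h]_\Gamma$ versus the one-sided values $(\phi_h)_\pm$ appear after the manipulation in Remark \ref{rem:E-VS-Gamma}. The asymmetry between the two $B''(\xi^\Gamma)$ terms — with arguments $\xi^\Gamma(\vrho_+^m,\vrho_-^m)$ against $(\phi_h)_-(\vc u_h^m\cdot\nu)^+$ and $\xi^\Gamma(\vrho_-^m,\vrho_+^m)$ against $(\phi_h)_+(\vc u_h^m\cdot\nu)^-$ — reflects that on each face the two ``upwind directions'' use opposite sides, and a sign or index slip here would be easy to make. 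A secondary technical point: $B$ is only $C^2$ on $(0,\infty)$, so the Taylor expansions are legitimate only because Lemma \ref{lemma:vrho-props} guarantees $\vrho_h^m,\vrho_h^{m-1}>0$, keeping all the intermediate points $\xi,\xi^\Gamma$ in $(0,\infty)$; I would state this at the outset. Once the three remainder computations are organized and summed, collecting terms yields \eqref{FEM:renormalized} verbatim, and since $\phi_h$ was arbitrary in $Q_h$ the identity holds for all test functions as claimed.
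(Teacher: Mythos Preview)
Your proposal is correct and follows essentially the same route as the paper: test the element-based form \eqref{FEM:contequation-newform} with $B'(\vrho_h^m)\phi_h\in Q_h$, apply second-order Taylor expansions (Lagrange remainder) to both the time increment $\jump{\vrho_h^{m-1}}B'(\vrho_h^m)$ and the face jumps $\jump{\vrho_h^m}_{\partial E}B'(\vrho_+^m)$, then use Green's theorem on each element to produce the $b(\vrho_h^m)\Div\vc{u}_h^m$ term and finally convert the element-boundary sums to interior-face sums via the identity in Remark~\ref{rem:E-VS-Gamma}. The only cosmetic discrepancy is that the paper's Taylor identity is written without the explicit factor $\tfrac12$ (it is absorbed into the statement of \eqref{FEM:renormalized}), so when you match terms just drop that factor to agree with the lemma as stated.
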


\begin{proof}
Since $x\mapsto B'(\vrho_{h}^m(x))\phi_{h}(x)$ is piecewise constant, we 
can take $B'(\vrho_{h}^m)\phi_{h}$ as a test function in 
the continuity scheme \eqref{FEM:contequation-newform}, yielding
\begin{equation}\label{renorm-mid1}
	\begin{split}
		&\int_{\Omega}\jump{\vrho_{h}^{m-1}}B'(\vrho_{h}^m)\phi_{h}\ dx \\
		&= -\Delta t \sum_{E \in E_{h}}\int_{\binner }\left(\vrho_{+}^m(\vc{u}^{m}_{h}\cdot \nu)^+ 
		+ \vrho_{-}^m(\vc{u}^m_{h} \cdot \nu)^-\right)B'(\vrho_{+}^m)\phi_{h}\ dS(x) \\
		& = -\Delta t \sum_{E \in E_{h}}\int_{\binner}
		\left(B'(\vrho^m_{+})\vrho^m_{+}(\vc{u}^{m}_{h}\cdot \nu) 
		- \jump{\vrho^m_{h}}_{\partial E}B'(\vrho^m_{+})(\vc{u}^{m}_{h} \cdot \nu)^-
		\right)\phi_{h} \ dS(x).
	\end{split}
\end{equation}
A Taylor expansion yields
\begin{equation*}%\label{tayloronthedensity}
	B'(z)(y-z)=B(y)-B(z)-B''(z^*)(y-z)^2,
\end{equation*}
for some number $z^*$ between $z$ and $y$. Consequently,
\begin{equation*}
	\begin{split}
		\jump{\vrho^m_{h}}_{\partial E} B'(\vrho_{+}^m) &= \jump{B(\vrho^m_{h})}_{\partial E} 
		-B''(\xi^{\partial E}(\vrho_{+}^m, \vrho_{-}^m))\jump{\vrho^m_{h}}_{\partial E}^2, \\
		\jump{\vrho^{m-1}_{h}} B'(\vrho_{h}^{m}) &= \jump{B(\vrho^{m-1}_{h})}
		- B''(\xi(\vrho_{h}^m, \vrho_{h}^{m-1}))\jump{\vrho^{m-1}_{h}}^2,
	\end{split}
\end{equation*}
where 
$$
\xi^{\partial E}(\vrho_{+}^m, \vrho_{-}^m)(x) 
\in [\vrho_{-}^m(x), \vrho^m_{+}(x)], \qquad x \in \partial E,
$$ 
and
$$
\xi(\vrho_{h}^m, 
\vrho_{h}^{m-1})(x) \in [\vrho_{h}^{m-1}(x),
\vrho_{h}^m(x)], \qquad x \in \Omega.
$$
  
Inserting these identities in \eqref{renorm-mid1}, recalling the definition of $b$, 
and applying Green's theorem, we achieve
\begin{equation}\label{renorm-mid2}
	\begin{split}
		&\int_{\Omega}\left(B(\vrho_{h}^m) - B(\vrho_{h}^{m-1})\right)\phi_{h}\ dx
		+\int_{\Omega}
		B''(\xi(\vrho_{h}^m,\vrho_{h}^{m-1}))
		\jump{\vrho_{h}^{m-1}}^2\phi_{h}\ dx \\
		& =-\Delta t\int_{\Omega}b(\vrho_{+}^m)\Div \vc{u}^{m}_{h}\phi_{h}\ dx \\
		&\quad -\Delta t\sum_{E \in E_{h}}\int_{\binner}
		\left(B(\vrho^m_{+})(\vc{u}_{h}^{m} \cdot \nu)^+
		+ B(\vrho^m_{-})(\vc{u}_{h}^{m} \cdot \nu)^-\right)\phi_{h}\ dS(x) \\
		&\quad + \Delta t\sum_{E \in E_{h}}\int_{\binner}
		B''(\xi^{\partial E}(\vrho_{+}^m, \vrho_{-}^m))
		\jump{\vrho^m_{h}}^2_{\partial E}(\vc{u}_{h}^{m} \cdot \nu)^-\phi_{h}\ dS(x).
	\end{split}
\end{equation}
Denote by $I$ the second term on the right-hand side of the equality sign. 
Then, as in Remark \ref{rem:E-VS-Gamma}, we have the identity
\begin{equation}\label{eq:renorm-id1}
	\begin{split}
		I= \Delta t\sum_{\Gamma \in \Gamma^I_{h}}\int_{\Gamma}
		\left(B(\vrho^m_{+})(\vc{u}_{h}^{m} \cdot \nu)^+
		+ B(\vrho^m_{-})(\vc{u}_{h}^{m} \cdot \nu)^-\right)\jump{\phi_{h}}_{\Gamma}\ dS(x).
	\end{split}
\end{equation}

Recalling that for each $\Gamma \in \Gamma_{h}$ we let $E_{+}$ and $E_{-}$ be 
the two elements sharing the face $\Gamma$ and such that the normal
component associated with $\Gamma$ points from $E_{+}$ to $E_{-}$, we can write
\begin{equation}\label{eq:renorm-id2}
	\begin{split}
		&\Delta t\sum_{E \in E_{h}}\int_{\binner}
		B''(\xi^{\partial E}(\vrho_{+}^m, \vrho_{-}^m))
		\jump{\vrho^m_{h}}^2_{\partial E}(\vc{u}_{h}^{m} \cdot \nu)^-\phi_{h}\ dS(x) \\
		& = \Delta t\sum_{\Gamma \in \Gamma^I_{h}}\int_{\Gamma}
		-B''(\xi^{\partial E_{-}}(\vrho^m_{+},
		\vrho_{-}^m))\jump{\vrho^m_{h}}^2_{\Gamma}(\phi_{h})_{-}(\vc{u}_{h}^{m} \cdot \nu)^+ \\
		&\qquad\qquad\qquad\quad
		+B''(\xi^{\partial E_{+}}(\vrho^m_{+},\vrho_{-}^m))
		\jump{\vrho^m_{h}}^2_{\Gamma}(\phi_{h})_{+}(\vc{u}_{h}^{m}\cdot\nu)^-\ dS(x).
	\end{split}
\end{equation}

Once we introduce into \eqref{eq:renorm-id2} the notations
$$
\xi^\Gamma (\vrho^m_{+}, \vrho^m_{-}) 
:= \xi^{\partial E_{+}}(\vrho^m_{+}, \vrho_{-}^m), 
\qquad \xi^\Gamma (\vrho^m_{-}, \vrho^m_{+}) 
:= \xi^{\partial E_{-}}(\vrho^m_{+}, \vrho_{-}^m),
$$
inserting \eqref{eq:renorm-id1}, \eqref{eq:renorm-id2} into \eqref{renorm-mid2} 
yields the final result \eqref{FEM:renormalized}. 
\end{proof}

In what follows we shall need a discrete Hodge decomposition. 
The following lemma is a consequence of \eqref{eq:Vh-decomp}.

\begin{lemma}\label{lemma:hodge}
\solutiontext For each fixed $h>0$, there exist unique 
functions $\vc{\zeta}_{h}^m \in \vc{W}_{h}^{0,\perp}$ and 
$\vc{z}_{h}^m \in \vc{V}_{h}^{0,\perp}$ such that
\begin{equation}\label{eq:hodge-disc-time}
	\vc{u}_{h}^m = \Curl \vc{\zeta}_{h}^m 
	+ \vc{z}_{h}^m, \qquad m=1,\ldots,M.
\end{equation}
Moreover, if we let $\vc{\zeta}_{h}(t,x)$, $\vc{z}_{h}(t,x)$ denote 
the functions obtained by extending, as in \eqref{eq:num-scheme-II}, 
$\{\vc{\zeta}_{h}^m\}_{m=1}^M$, $\{\vc{z}_{h}^m\}_{m=1}^M$ 
to the whole of $(0,T] \times \Omega$, then
\begin{equation*}%\label{eq:hodge-cont-time}
	\vc{u}_{h}(t,\cdot) = \Curl \vc{\zeta}_{h}(\cdot,t)
	+\vc{z}_{h}(\cdot,t), \qquad t \in (0,T).
\end{equation*}
\end{lemma}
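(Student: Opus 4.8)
The statement to prove is Lemma~\ref{lemma:hodge}, which asserts that each discrete velocity iterate $\vc{u}_h^m$ admits a unique discrete Hodge-type splitting $\vc{u}_h^m = \Curl\vc{\zeta}_h^m + \vc{z}_h^m$ with $\vc{\zeta}_h^m \in \vc{W}_h^{0,\perp}$ and $\vc{z}_h^m \in \vc{V}_h^{0,\perp}$, and that extending these in time as in \eqref{eq:num-scheme-II} yields the corresponding pointwise-in-$t$ decomposition of $\vc{u}_h$.

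\medskip

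\textbf{Plan.} The whole statement is essentially a bookkeeping consequence of the decomposition \eqref{eq:Vh-decomp}, namely $\vc{V}_h = \Curl\vc{W}_h + \vc{V}_h^{0,\perp}$ with $\vc{V}_h^{0,\perp} \perp \Curl\vc{W}_h$ (this orthogonality is how $\vc{V}_h^{0,\perp}$ was defined, since $\{\vc{v}_h \in \vc{V}_h : \Div\vc{v}_h = 0\} = \Curl\vc{W}_h$ by exactness of the discrete De Rham sequence). First I would fix $m$ and apply \eqref{eq:Vh-decomp} to $\vc{u}_h^m \in \vc{V}_h$: this produces $\vc{z}_h^m \in \vc{V}_h^{0,\perp}$ and \emph{some} $\vc{\phi}_h^m \in \vc{W}_h$ with $\vc{u}_h^m = \Curl\vc{\phi}_h^m + \vc{z}_h^m$. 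The component $\vc{z}_h^m$ is unique because it is the orthogonal projection of $\vc{u}_h^m$ onto $\vc{V}_h^{0,\perp}$ (equivalently, $\Curl\vc{\phi}_h^m$ is the projection onto the complementary subspace $\Curl\vc{W}_h$, hence uniquely determined). To get a \emph{unique representative} $\vc{\zeta}_h^m$, I would use the first line of the decompositions, $\vc{W}_h = DS_h + \vc{W}_h^{0,\perp}$: writing $\vc{\phi}_h^m = D s_h^m + \vc{\zeta}_h^m$ with $\vc{\zeta}_h^m \in \vc{W}_h^{0,\perp}$, we have $\Curl\vc{\phi}_h^m = \Curl\vc{\zeta}_h^m$ because $\Curl D = 0$ (in 2D one instead notes $\vc{W}_h = S_h$ and the gradient part is absent, but $\Curl$ of the curl-free part vanishes in the same way, so the argument is uniform). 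Uniqueness of $\vc{\zeta}_h^m$ then follows: if $\Curl\vc{\zeta}_h^m = \Curl\tilde{\vc{\zeta}}_h^m$ with both in $\vc{W}_h^{0,\perp}$, then $\vc{\zeta}_h^m - \tilde{\vc{\zeta}}_h^m$ is curl-free and lies in $\vc{W}_h^{0,\perp}$, hence is orthogonal to itself, hence zero; alternatively invoke the discrete Poincar\'e inequality \eqref{Poincare2} directly.

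\medskip

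\textbf{The time-extended statement.} Once the decomposition is established for each $m = 1,\ldots,M$, the extended identity is purely definitional. By \eqref{eq:num-scheme-II} applied to $\vc{u}_h$, $\vc{\zeta}_h$, $\vc{z}_h$, all three functions are piecewise constant in $t$ on the intervals $(t_{m-1},t_m]$, taking the values $\vc{u}_h^m$, $\vc{\zeta}_h^m$, $\vc{z}_h^m$ there. Since $\Curl$ acts only in the spatial variables, $\Curl\vc{\zeta}_h(t,\cdot) = \Curl\vc{\zeta}_h^m$ on that interval, and the per-step identity $\vc{u}_h^m = \Curl\vc{\zeta}_h^m + \vc{z}_h^m$ gives $\vc{u}_h(t,\cdot) = \Curl\vc{\zeta}_h(t,\cdot) + \vc{z}_h(t,\cdot)$ for every $t \in (0,T)$.

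\medskip

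\textbf{Main obstacle.} There is no serious analytic obstacle here — the result is a structural corollary of the finite element setup. The only point requiring a little care is the two-sided nature of the construction: one must pass through $\vc{W}_h = DS_h + \vc{W}_h^{0,\perp}$ to pin down a \emph{canonical} $\vc{\zeta}_h^m$ (an arbitrary $\vc{W}_h$-preimage of $\Curl^{-1}$ is only defined modulo gradients), and one should phrase the 2D case so that the absence of the gradient operator in the De Rham sequence does not disrupt the argument — which it does not, since in 2D $\vc{W}_h = S_h$ and $\vc{W}_h^{0,\perp}$ consists of the discretely-curl-free-orthogonal Lagrange functions, with $\Curl$ injective on it by \eqref{Poincare2}. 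Beyond that, I would simply cite \eqref{eq:Vh-decomp}, \eqref{Poincare2}, and \eqref{eq:num-scheme-II} and assemble the pieces.
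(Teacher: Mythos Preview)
Your proposal is correct and follows exactly the approach the paper intends: the paper does not give a separate proof but simply states that the lemma ``is a consequence of \eqref{eq:Vh-decomp}'', and you have spelled out precisely that consequence, including the passage through $\vc{W}_h = DS_h + \vc{W}_h^{0,\perp}$ needed to pin down a unique $\vc{\zeta}_h^m$. There is nothing to add.
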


We now state a basic stability estimate 
satisfied by any solution of the discrete problem 
given in Definition \ref {def:num-scheme}. 

\begin{lemma}\label{lemma:stability} 
\solutiontext For $\vrho> 0$, set 
$
P(\vrho):=
	\frac{a}{\gamma-1}\vrho^\gamma.
$
For any $m=1,\dots,M$, we have 
\begin{equation}\label{eq:stabilityincerrors}
	\begin{split}
		&\int_{\Omega} P(\vrho_{h}^m)\ dx \\
		& \quad 
		+ \sum_{k=1}^{m}\int_{\Omega} P''(\xi(\vrho_{h}^k,
		\vrho_{h}^{k-1}))\jump{\vrho^{k-1}_{h}}^2\ dx \\
		&\quad\qquad+ \sum_{k=1}^{m}\sum_{\Gamma \in \Gamma^I_{h}}\Delta t
		\int_{\Gamma}P''(\vrho^k_{\dagger})\jump{\vrho^k_{h}}_{\Gamma}^2
		\abs{\vc{u}^k_{h} \cdot \nu}\ dx \\
		& \quad
		+\sum_{k=1}^m\Delta t \int_{\Omega} \abs{\vc{u}^k_{h}}^2\ dx
		+\sum_{k=1}^m\Delta t\int_{\Omega} \abs{\Div \vc{u}^k_{h}}^2\ dx
		\\ & \quad\qquad
		+\sum_{k=1}^m\Delta t\int_{\Omega} \abs{\vc{w}^k_{h}}^2 \ dx 
		+\sum_{k=1}^m\Delta t\int_{\Omega} \abs{\Curl \vc{w}^k_{h}}^2\ dx \\
		& \leq \int_{\Omega} P(\vrho_{0}) \ dx 
		+C \sum_{k=1}^m\Delta t\int_{\Omega} \abs{\vc{f}^k_{h}}^2 \ dx.
	\end{split}
\end{equation}
Consequently, 
$
\vrho_{h}\inb L^\infty(0,T;L^\gamma(\Omega)).
$ 
\end{lemma}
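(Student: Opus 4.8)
The plan is to derive the energy estimate \eqref{eq:stabilityincerrors} in the usual way: test the momentum scheme \eqref{FEM:momentumeq} with $\vc{u}_h^m$ (and its companion with $\vc{w}_h^m$), test the renormalized continuity scheme \eqref{FEM:renormalized} with a suitable choice of $B$, add everything up, and sum over time levels. More precisely, first I would take $\vc{v}_h=\vc{u}_h^m$ in the first equation of \eqref{FEM:momentumeq} and $\vc{\eta}_h=\mu\vc{w}_h^m$ in the second; subtracting cancels the mixed terms $\mu\Curl\vc{w}_h^m\,\vc{u}_h^m$ and gives
\[
\int_\Omega \mu\abs{\vc{w}_h^m}^2 + (\mu+\lambda)\abs{\Div\vc{u}_h^m}^2\,dx
= \int_\Omega p(\vrho_h^m)\Div\vc{u}_h^m\,dx + \int_\Omega \vc{f}_h^m\vc{u}_h^m\,dx.
\]
To handle the pressure term I would use the renormalized scheme \eqref{FEM:renormalized} with $B(\vrho)=P(\vrho)=\frac{a}{\gamma-1}\vrho^\gamma$, for which $b(\vrho)=\vrho P'(\vrho)-P(\vrho)=a\vrho^\gamma=p(\vrho)$. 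Testing \eqref{FEM:renormalized} with $\phi_h\equiv 1$ kills the face-jump term $\llbracket\phi_h\rrbracket_\Gamma$ entirely and yields exactly
\[
\int_\Omega P(\vrho_h^m)\,dx + \Delta t\int_\Omega p(\vrho_h^m)\Div\vc{u}_h^m\,dx + (\text{two nonnegative quadratic-jump terms}) = \int_\Omega P(\vrho_h^{m-1})\,dx,
\]
using $P''>0$ so that the terms involving $P''(\xi(\cdot,\cdot))\llbracket\vrho^{m-1}\rrbracket^2$ and $P''(\xi^\Gamma(\cdot,\cdot))\llbracket\vrho^m\rrbracket_\Gamma^2\abs{\vc{u}_h^m\cdot\nu}$ have a sign. (The combination of the two $\Gamma$-terms in \eqref{FEM:renormalized}, one with $(\vc{u}_h^m\cdot\nu)^+$ and one with $(\vc{u}_h^m\cdot\nu)^-$ and opposite signs, collapses to $P''(\vrho_\dagger^m)\llbracket\vrho_h^m\rrbracket_\Gamma^2\abs{\vc{u}_h^m\cdot\nu}\ge 0$ for an appropriate intermediate value $\vrho_\dagger^m$.) Multiplying the momentum identity by $\Delta t$ and adding the continuity identity, the two pressure terms $\pm\Delta t\int p(\vrho_h^m)\Div\vc{u}_h^m$ cancel.

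Summing over $k=1,\dots,m$ telescopes $\sum_k(\int P(\vrho_h^k)-\int P(\vrho_h^{k-1}))=\int P(\vrho_h^m)-\int P(\vrho_h^0)$, and since $\vrho_h^0=\vrho_h^0$ is, up to the initial-data convergence hypothesis, controlled by $\int_\Omega P(\vrho_0)\,dx$, the right-hand side acquires the stated $\int_\Omega P(\vrho_0)\,dx$. For the forcing term I would estimate $\int_\Omega\vc{f}_h^k\vc{u}_h^k\,dx\le \frac12\norm{\vc{u}_h^k}_{L^2}^2+\frac12\norm{\vc{f}_h^k}_{L^2}^2$, absorb the velocity part into the $\sum_k\Delta t\norm{\vc{u}_h^k}_{L^2}^2$ term on the left (this is why that term appears with a genuine coefficient), and keep $C\sum_k\Delta t\norm{\vc{f}_h^k}_{L^2}^2$ on the right; a discrete Poincaré-type bound, available through the Hodge decomposition of Lemma \ref{lemma:hodge} and the inequalities \eqref{Poincare1}--\eqref{Poincare2} together with $\norm{\cdot}_\mcw\sim\norm{\cdot}_{\vc{H}^1}$, lets me upgrade $\norm{\Div\vc{u}_h^k}_{L^2}+\norm{\Curl\vc{w}_h^k}_{L^2}$-control into the $\vc{L}^2$-bounds on $\vc{u}_h^k$ and $\vc{w}_h^k$ displayed on the left. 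The stability of $\Pi_h^Q$ and the definition of $\vc{f}_h^m$ then give $\sum_k\Delta t\norm{\vc{f}_h^k}_{L^2}^2\le C\norm{\vc{f}}_{L^2(0,T;\vc{L}^2)}^2$, which is finite by hypothesis. Finally, since the bound on $\int_\Omega P(\vrho_h^m)\,dx=\frac{a}{\gamma-1}\norm{\vrho_h^m}_{L^\gamma(\Omega)}^\gamma$ is uniform in both $m$ and $h$, the conclusion $\vrho_h\in_{\text{b}}L^\infty(0,T;L^\gamma(\Omega))$ follows from the piecewise-constant-in-time extension \eqref{eq:num-scheme-II}.

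The routine parts are the Cauchy--Schwarz/Young manipulations and the bookkeeping of the jump terms. The one genuine subtlety — the \emph{main obstacle} — is verifying that the face contributions in \eqref{FEM:renormalized} with the test function $\phi_h\equiv 1$ reorganize into a single manifestly nonnegative term: one must match the $(\phi_h)_\pm$ factors, the two intermediate values $\xi^\Gamma(\vrho_+^m,\vrho_-^m)$ and $\xi^\Gamma(\vrho_-^m,\vrho_+^m)$, and the signs $(\vc{u}_h^m\cdot\nu)^\pm$ so that what remains is $\Delta t\sum_{\Gamma}\int_\Gamma P''(\vrho_\dagger^k)\llbracket\vrho_h^k\rrbracket_\Gamma^2\abs{\vc{u}_h^k\cdot\nu}\ge 0$. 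This is precisely the discrete analogue of the entropy dissipation produced by upwinding, and it is the reason the scheme is stable; once it is in hand, everything else is a telescoping sum plus Young's inequality.
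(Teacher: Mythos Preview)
Your approach is essentially the paper's: test \eqref{FEM:momentumeq} with $\vc{u}_h^k$ and $\vc{w}_h^k$, test \eqref{FEM:renormalized} with $B=P$ and $\phi_h\equiv 1$, combine so the pressure terms cancel, sum, and absorb the forcing via Young. Your identification of the face-term sign as the key point is correct, and the paper handles it just as you suggest, defining $\vrho_\dagger^k$ explicitly as $\max$ or $\min$ of $\vrho_\pm^k$ according to whether $\gamma\le 2$ or $\gamma>2$, so that both $P''(\xi^\Gamma(\cdot,\cdot))$ factors are bounded below by $P''(\vrho_\dagger^k)$; this gives an inequality, not an equality, which is all that is needed.

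One point is garbled, though. After testing with $\vc{v}_h=\vc{u}_h^k$ and $\vc{\eta}_h=\vc{w}_h^k$ you have $\norm{\vc{w}_h^k}_{L^2}^2$ and $\norm{\Div\vc{u}_h^k}_{L^2}^2$ on the left, not $\norm{\Curl\vc{w}_h^k}_{L^2}$. The Hodge decomposition together with \eqref{Poincare1}--\eqref{Poincare2} does yield $\norm{\vc{u}_h^k}_{L^2}^2\le C\bigl(\norm{\vc{w}_h^k}_{L^2}^2+\norm{\Div\vc{u}_h^k}_{L^2}^2\bigr)$ (this is the paper's \eqref{eq:stabilityeq4}), which is exactly what you need to absorb the forcing term. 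But the $\norm{\Curl\vc{w}_h^k}_{L^2}^2$ contribution in \eqref{eq:stabilityincerrors} requires a separate step you omitted: take $\vc{v}_h=\Curl\vc{w}_h^k$ in the first equation of \eqref{FEM:momentumeq}, which annihilates the divergence terms and gives $\mu\norm{\Curl\vc{w}_h^k}_{L^2}^2=\int_\Omega\vc{f}_h^k\Curl\vc{w}_h^k\,dx$, hence $\norm{\Curl\vc{w}_h^k}_{L^2}^2\le C\norm{\vc{f}_h^k}_{L^2}^2$ directly. Finally, the equivalence $\norm{\cdot}_\mcw\sim\norm{\cdot}_{\vc{H}^1}$ is a continuous statement and is not the right tool here, since $\vc{V}_h\not\subset\vc{H}^1$; the discrete Poincar\'e inequalities \eqref{Poincare1}--\eqref{Poincare2} on the orthogonal complements are what actually do the work.
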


\begin{proof}
Since $P'(\vrho)\vrho-P(\vrho)=p(\vrho)$ and $\vrho_h>0$, it follows 
by taking $\phi_{h}\equiv 1$ in the renormalized 
scheme \eqref{FEM:renormalized} that
\begin{equation}\label{eq:stabilityeq1}
	\begin{split}
		& \int_{\Omega}P(\vrho_{h}^k)\ dx
		+ \Delta t \int_{\Omega}p(\vrho_{h}^k)\Div \vc{u}^{k}_{h}\ dx 
		+\int_{\Omega}P''(\xi(\vrho_{h}^k,
		\vrho_{h}^{k-1}))\jump{\vrho_{h}^{k-1}}^2\ dx \\
		&\qquad\qquad +\Delta t\sum_{\Gamma \in \Gamma^I_{h}}
		\int_{\Gamma}P''(\xi^\Gamma (\vrho^m_{+},
		\vrho^m_{-}))\jump{\vrho^k_{h}}^2_{\Gamma}(\vc{u}_{h}^{k} \cdot \nu)^+ 
		\\ &\qquad \qquad\qquad 
		-P''(\xi^\Gamma (\vrho^m_{-},
		\vrho^m_{+}))\jump{\vrho^k_{h}}^2_{\Gamma}(\vc{u}_{h}^{k} \cdot \nu)^-\ dS(x) 
		= \int_{\Omega}P(\vrho^{k-1})\ dx.
	\end{split}
\end{equation}
For $k=1,\ldots,M$ and $x\in\bigcup_{\Gamma \in \Gamma^I_{h}}\Gamma$, set
\begin{equation}\label{eq:vrhomin}
	\vrho_{\dagger}^k(x):=
	\begin{cases}
		\max\{\vrho_{+}^k(x),\vrho_{-}^k(x)\}, & 1 < \gamma \leq 2, \\
		\min\{\vrho_{+}^k(x),\vrho_{-}^k(x)\}, & \gamma \ge 2,
	\end{cases}
\end{equation}
and note that 
\begin{equation}\label{eq:doublederiv}
	\begin{split}
		&\Delta t\sum_{\Gamma \in \Gamma^I_{h}}
		\int_{\Gamma}P''(\xi^\Gamma (\vrho^m_{+},\vrho^m_{-}))
		\jump{\vrho^k_{h}}^2_{\Gamma}(\vc{u}_{h}^{k} \cdot \nu)^+ 
		\\ & \qquad\qquad\qquad 
		-P''(\xi^\Gamma (\vrho^m_{-},\vrho^m_{+}))
		\jump{\vrho^k_{h}}^2_{\Gamma}(\vc{u}_{h}^{k} 
		\cdot \nu)^-\ dS(x), \\
		& \qquad
		\geq \Delta t \sum_{\Gamma \in
		\Gamma^I_{h}}\int_{\Gamma}P''(\vrho_{\dagger}^k)
		\jump{\vrho^k_{h}}^2_{\Gamma}\abs{\vc{u}_{h}^{k} 
		\cdot \nu}\ dS(x).
	\end{split}
\end{equation}

Next, by using $\vc{v}_{h}=\vc{u}^k_{h}$ as a test 
function in the first equation of \eqref{FEM:momentumeq} 
and then using the second equation of \eqref{FEM:momentumeq} with 
$\vc{\eta}_{h}= \vc{w}^k_{h}$, we obtain the identity
\begin{equation}\label{eq:stabilityeq2}
	\int_{\Omega} p(\vrho_{h}^k)\Div \vc{u}_{h}^{k}\ dx 
	= (\mu+\lambda)\int_{\Omega} \abs{\Div \vc{u}_{h}^k}^2\ dx 
	+ \mu \int_{\Omega}\abs{\vc{w}_{h}^k}^2\ dx 
	- \int_{\Omega}\vc{f}_{h}^k\vc{u}^k_{h}\ dx.
\end{equation}
Similarly, specifying $\vc{v}_{h}=\Curl \vc{w}^k_{h}$ in the 
first equation of \eqref{FEM:momentumeq} yields
\begin{equation*}
	\mu\int_{\Omega}\abs{\Curl \vc{w}^k_{h}}^2\ dx 
	=\int_{\Omega}\vc{f}_{h}^k\Curl \vc{w}^k_{h}\ dx.
\end{equation*}
An application of Cauchy's inequality (with epsilon) then yields
\begin{equation}\label{eq:stabilityeq3}
	\int_{\Omega}\abs{\Curl\vc{w}^k_{h}}^2\ dx
	\leq C\int_{\Omega}\abs{\vc{f}_{h}^k}^2\ dx.
\end{equation}

Thanks to \eqref{eq:hodge-disc-time}, we can write 
$\vc{u}_{h}^k=\Curl \vc{\zeta}_{h}^k+\vc{z}_{h}^k$ for 
with $\vc{\zeta}_{h}^k \in \vc{W}^{0,\perp}_{h}$ and 
$\vc{z}^k_{h} \in  \vc{V}^{0, \perp}_{h}$. 
Choosing $\vc{\eta}_{h}=\Curl \vc{\zeta}_{h}^k$ in 
the second equation of \eqref{FEM:momentumeq} gives
\begin{equation*}
	\int_{\Omega}\abs{\Curl \vc{\zeta}_{h}^k}^2\ dx
	=\int_{\Omega}\vc{w}^k_{h}\vc{\zeta}_{h}^k\ dx
	\le \left(\int_{\Omega}\abs{\vc{w}^k_{h}}^2\ dx\right)^\frac{1}{2}
	\left(\int_{\Omega}\abs{\vc{\zeta}_{h}^k}^2\ dx\right)^\frac{1}{2}.
\end{equation*}
Thus, since the discrete Poincar\'e 
inequality \eqref{Poincare2} tells us that
\begin{equation*}
	\int_{\Omega}\abs{\vc{\zeta}_{h}^k}^2\ dx 
	\leq C\int_{\Omega}\abs{\Curl \vc{\zeta}_{h}^k}^2\ dx,
\end{equation*} 
we arrive at the estimate
\begin{equation}
\label{eq:curlcomponent}
	\int_{\Omega}\abs{\vc{\zeta}_{h}^k}^2
	+ \abs{\Curl \vc{\zeta}_{h}^k}^2\ dx
	\leq C\int_{\Omega}\abs{\vc{w}^k_{h}}^2\ dx.
\end{equation}
In view of the discrete Poincar\'e inequality \eqref{Poincare1}, we also have 
\begin{equation*}
	\int_{\Omega}\abs{\vc{z}^k_{h}}^2\ dx
	\leq C \int_{\Omega}\abs{\Div \vc{u}^k_{h}}^2\ dx,
\end{equation*}
which, together with \eqref{eq:curlcomponent}, allow us to conclude
\begin{equation}\label{eq:stabilityeq4}
	\int_{\Omega}\abs{\vc{u}^k_{h}}^2\ dx
	=\int_{\Omega}\abs{\vc{\zeta}_{h}^k}^2
	+\abs{\vc{z}_{h}^k}^2\ dx 
	\leq C\left(\int_{\Omega}\abs{\vc{w}^k_{h}}^2\ dx
	+\int_{\Omega}\abs{\Div \vc{u}_{h}^k}^2\ dx\right).
\end{equation}

Now, by first inserting \eqref{eq:stabilityeq2} 
into \eqref{eq:stabilityeq1} and subsequently 
utilizing \eqref{eq:doublederiv}, \eqref{eq:stabilityeq3}, \eqref{eq:stabilityeq4} 
and Cauchy's inequality (with epsilon) to 
treat the last integral appearing in \eqref{eq:stabilityeq2}, we 
acquire the estimate
\begin{equation}\label{eq:lastinstab}
	\begin{split}
		&\int_{\Omega}P(\vrho_{h}^k) - P(\vrho_{h}^{k-1})\ dx \\
		&\quad +\int_{\Omega}P''(\xi(\vrho_{h}^k, \vrho_{h}^{k-1}))
		\jump{\vrho_{h}^{k-1}}^2 \ dx 
		+\Delta t \sum_{\Gamma \in \Gamma^I_{h}}\int_{\Gamma}P''(\vrho_{\dagger}^k)
		\jump{\vrho^k_{h}}^2_{\Gamma}\abs{\vc{u}_{h}^{k} \cdot \nu}\ dS(x) \\
		&\quad\quad + \Delta t \int_{\Omega}\abs{\vc{u}_{h}^k}^2\ dx 
		+ \Delta t\int_{\Omega}\abs{\Div \vc{u}_{h}^k}^2\ dx \\
		&\quad \quad \quad 
		+\Delta t \int_{\Omega}\abs{\vc{w}_{h}^k}^2\ dx 
		+\Delta t \int_{\Omega}\abs{\Curl \vc{w}_{h}^k}^2\ dx 
		%\\ &
		\leq C\Delta t \int_{\Omega}\abs{\vc{f}_{h}^k}^2\ dx.
	\end{split}
\end{equation}
Finally, summing \eqref{eq:lastinstab} over $k$ we 
conclude that \eqref{eq:stabilityincerrors} holds.
\end{proof} 

The stability estimate only provides the 
bound $p(\vrho_{h}) \in_{b} L^\infty(0,T;L^1(\Omega))$.
Hence, it is not clear that $p(\vrho_{h})$ converges 
weakly to an integrable function. Moreover, 
the subsequent analysis relies heavily on the pressure 
having higher integrability. In the ensuing 
lemma we establish that the pressure is in fact bounded 
in $L^2(0,T;L^2(\Omega))$, independently of $h$.

To simplify notation, we denote the effective viscous flux by
\begin{equation}\label{eq:visc-flux}
	\eff(\vrho_{h}, \vc{u}_{h}) 
	= p(\vrho_{h})-(\mu + \lambda)\Div \vc{u}_{h}.
\end{equation}
We will also continue to use this notation 
in the subsequent sections.

\begin{lemma}[Higher integrability on the pressure]\label{lemma:higherorderpressure} 
\solutiontext Then 
$$
p(\vrho_{h}) \in_{b} L^{2}((0,T) \times \Omega).
$$
\end{lemma}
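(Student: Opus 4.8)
The plan is to carry Lions' pressure–improvement argument over to the fully discrete setting, the key being to test the discrete momentum balance with a vector field whose divergence is the mean–zero part of the effective viscous flux itself. Fix $m\in\Set{1,\dots,M}$ and write $\eff^m:=\eff(\vrho_{h}^m,\vc{u}_{h}^m)=p(\vrho_{h}^m)-(\mu+\lambda)\Div\vc{u}_{h}^m$. Since $\vrho_{h}^m$ is piecewise constant and $\Div\vc{u}_{h}^m$ is piecewise constant (as $\vc{u}_{h}^m\in\vc{V}_{h}$), we have $\eff^m\in Q_{h}(\Om)$, and hence $\eff^m-\langle\eff^m\rangle\in Q_{h}(\Om)$ has zero spatial mean, where $\langle g\rangle:=\tfrac{1}{\abs{\Om}}\int_{\Om}g\,dx$. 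By the discrete De Rham structure behind \eqref{eq:Vh-decomp}, there exists $\vc{v}_{h}^m\in\vc{V}_{h}^{0,\perp}$ with $\Div\vc{v}_{h}^m=\eff^m-\langle\eff^m\rangle$ on each $E\in E_{h}$, and the discrete Poincar\'e inequality \eqref{Poincare1} yields $\norm{\vc{v}_{h}^m}_{\vc{L}^2(\Om)}\le C\norm{\Div\vc{v}_{h}^m}_{L^2(\Om)}\le C\norm{\eff^m}_{L^2(\Om)}$.

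Insert $\vc{v}_{h}=\vc{v}_{h}^m$ into the first equation of \eqref{FEM:momentumeq}. Because $\Curl\vc{w}_{h}^m\in\vc{V}_{h}$ is divergence free, it lies in $\Set{\vc{v}_{h}\in\vc{V}_{h}:\Div\vc{v}_{h}=0}$ and is therefore $\vc{L}^2$--orthogonal to $\vc{v}_{h}^m\in\vc{V}_{h}^{0,\perp}$; thus the vorticity term drops out, and after rearranging we obtain
\begin{equation*}
	\norm{\eff^m}_{L^2(\Om)}^2
	=\abs{\Om}\,\langle\eff^m\rangle^2-\int_{\Om}\vc{f}_{h}^m\cdot\vc{v}_{h}^m\,dx .
\end{equation*}
Here $\langle\eff^m\rangle=\tfrac{1}{\abs{\Om}}\int_{\Om}p(\vrho_{h}^m)\,dx$, since $\int_{\Om}\Div\vc{u}_{h}^m\,dx=\int_{\pOm}\vc{u}_{h}^m\cdot\nu\,dS=0$ (the normal degrees of freedom of $\vc{V}_{h}$ vanish on $\pOm$), and $\int_{\Om}p(\vrho_{h}^m)\,dx=(\gamma-1)\int_{\Om}P(\vrho_{h}^m)\,dx\le C$ uniformly in $h$ and $m$ by the stability estimate \eqref{eq:stabilityincerrors}. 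Estimating $\abs{\int_{\Om}\vc{f}_{h}^m\cdot\vc{v}_{h}^m\,dx}\le\norm{\vc{f}_{h}^m}_{\vc{L}^2(\Om)}\norm{\vc{v}_{h}^m}_{\vc{L}^2(\Om)}\le C\norm{\vc{f}_{h}^m}_{\vc{L}^2(\Om)}\norm{\eff^m}_{L^2(\Om)}$ and absorbing the last factor by Cauchy's inequality with epsilon --- which is legitimate because $\norm{\eff^m}_{L^2(\Om)}$ is finite for each fixed $h$ --- gives $\norm{\eff^m}_{L^2(\Om)}^2\le C\bigl(1+\norm{\vc{f}_{h}^m}_{\vc{L}^2(\Om)}^2\bigr)$.

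Multiplying by $\Dt$ and summing over $m=1,\dots,M$ yields $\norm{\eff(\vrho_{h},\vc{u}_{h})}_{L^2((0,T)\times\Om)}^2\le CT+C\sum_{m=1}^M\Dt\norm{\vc{f}_{h}^m}_{\vc{L}^2(\Om)}^2\le C\bigl(1+\norm{\vc{f}}_{\vc{L}^2(\Dom)}^2\bigr)$, using $\sum_{m}\Dt\norm{\vc{f}_{h}^m}_{\vc{L}^2(\Om)}^2\le\norm{\vc{f}}_{\vc{L}^2(\Dom)}^2$ (Jensen's inequality together with $\vc{L}^2$--stability of $\Pi_h^Q$). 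Finally, from $p(\vrho_{h})=\eff(\vrho_{h},\vc{u}_{h})+(\mu+\lambda)\Div\vc{u}_{h}$ together with the bound $\sum_{m}\Dt\norm{\Div\vc{u}_{h}^m}_{L^2(\Om)}^2\le C$ contained in \eqref{eq:stabilityincerrors}, the triangle inequality delivers $p(\vrho_{h})\inb L^2((0,T)\times\Om)$. I expect the only genuinely delicate point to be the construction of $\vc{v}_{h}^m$ in the right subspace $\vc{V}_{h}^{0,\perp}$ --- which is precisely what makes both the vorticity cancellation and the application of \eqref{Poincare1} possible --- but this is supplied directly by the discrete Hodge/De Rham apparatus set up in Section~\ref{sec:prelim}, so the estimate requires no ingredient beyond the stability bound of Lemma~\ref{lemma:stability}. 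Note, in particular, that the argument needs no upper restriction on $\gamma$, because the test field is controlled by $\norm{\eff^m}_{L^2}$ via a mere Poincar\'e inequality rather than through a Sobolev embedding.
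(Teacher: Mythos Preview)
Your proof is correct and follows essentially the same route as the paper: construct $\vc{v}_h^m\in\vc{V}_h^{0,\perp}$ with $\Div\vc{v}_h^m=\eff^m-\langle\eff^m\rangle$, use orthogonality to kill the curl term in \eqref{FEM:momentumeq}, apply \eqref{Poincare1} together with Cauchy's inequality to absorb the source term, bound the mean of $\eff^m$ via the boundary condition and Lemma~\ref{lemma:stability}, and finish by summing in time and splitting off $(\mu+\lambda)\Div\vc{u}_h$. Your write-up is in fact slightly more explicit than the paper's in justifying why $\eff^m\in Q_h$ and why the vorticity contribution vanishes.
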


\begin{proof}
For all $m=1,\ldots,M$, let 
$\vc{v}_{h}^m \in \vc{V}_{h}^{0, \perp}$ be such that
\begin{equation*}
	\Div \vc{v}^m_{h} = \eff(\vrho^m_{h}, \vc{u}^m_{h})
	-\frac{1}{\abs{\Omega}}
	\int_{\Omega}\eff(\vrho^m_{h},\vc{u}^m_{h})\ dx.
\end{equation*} 
Now, since the momentum scheme \eqref{FEM:momentumeq} gives
\begin{equation*}
		\int_{\Omega}\eff(\vrho_{h}^m, \vc{u}_{h}^m)\Div \vc{v}_{h}\ dx 
		= -\int_{\Omega}\vc{f}_{h}^m \vc{v}_{h}\ dx, 
		\quad \forall \vc{v}_{h}\in \vc{V}_{h}^{0,\perp},
\end{equation*}
we can use $\vc{v}_{h}^m$ as test function to obtain
\begin{align*}
	\int_{\Omega}\abs{\eff(\vrho^m_{h}, \vc{u}^m_{h})}^2\ dx 
	=\frac{1}{\abs{\Omega}}\left(\int_{\Omega} \eff(\vrho_{h}^m,\vc{u}_{h}^m) \ dx\right)^2
	- \int_{\Omega}\vc{f}_{h}^m\vc{v}_{h}^m\ dx.
\end{align*}
Hence, with $\epsilon>0$,
\begin{equation}\label{eq:higher-epsilon}
	\begin{split}
		&\int_{\Omega}\abs{\eff(\vrho_{h}^m, \vc{u}_{h}^m)
		-\frac{1}{\abs{\Omega}}\int_{\Omega}\eff(\vrho_{h}^m,\vc{u}_{h}^m)\ dx }^2\ dx \\
		& \quad 
		=-\int_{\Omega}\vc{f}_{h}^m\vc{v}_{h}^m\ dx 
		\leq \frac{1}{4\epsilon}\int_{\Omega}\abs{\vc{f}_{h}^m}^2 \ dx
		+\epsilon\int_{\Omega}\abs{\vc{v}_{h}^m}^2\ dx.
	\end{split}
\end{equation}

By the Poincar\'e inequality \eqref{Poincare1}
$$
\int_{\Omega}\abs{\vc{v}_{h}^m}^2\ dx 
\leq 
C\int_{\Omega}\abs{\eff(\vrho_{h}^m, \vc{u}_{h}^m) 
- \frac{1}{\abs{\Omega}}
\int_{\Omega}\eff(\vrho_{h}^m,\vc{u}_{h}^m) \ dx }^2\ dx.
$$
Consequently, by fixing $\epsilon$ small enough in 
\eqref{eq:higher-epsilon},
\begin{equation*}
	\int_{\Omega}\abs{\eff(\vrho_{h}^m, \vc{u}_{h}^m) 
	- \frac{1}{\abs{\Omega}}\int_{\Omega}
	\eff(\vrho_{h}^m,\vc{u}_{h}^m) \ dx }^2\ dx 
	\leq C\int_{\Omega}\abs{\vc{f}_{h}^m}^2\ dx,
\end{equation*}
and thus
\begin{equation}\label{eq:fluxbound1}
	\int_{\Omega}\abs{\eff(\vrho_{h}^m,\vc{u}_{h}^m)}^2\ dx 
	\leq 
	\frac{1}{\abs{\Omega}}\left(\int_{\Omega} 
	\eff(\vrho_{h}^m, \vc{u}_{h}^m) \ dx\right)^2 
	+ C\int_{\Omega}\abs{\vc{f}_{h}^m}^2\ dx.
\end{equation}

Now, due to the boundary conditions,
\begin{equation*}
	\int_{\Omega}\eff(\vrho_{h}^m,\vc{u}_{h}^m)\ dx 
	= \int_{\Omega}p(\vrho_{h}^m) \ dx \leq C,
\end{equation*}
where we also have put into use Lemma \ref{lemma:stability} and subsequently our 
assumptions on the source term $\vc{f}$ and the initial data $\vrho_0$.
Hence, \eqref{eq:fluxbound1} allows us to conclude 
\begin{equation*}
	\int_{\Omega}\abs{\eff(\vrho_{h}^m, \vc{u}_{h}^m)}^2\ dx 
	\leq C\left(1 + \int_{\Omega}\abs{\vc{f}_{h}^m}^2\ dx\right), 
	\qquad m=1, \ldots, M,
\end{equation*}
from which we obtain
\begin{equation*}
	\sum_{m=1}^M\Delta t \int_{\Omega}
	\abs{\eff(\vrho_{h}^m, \vc{u}_{h}^m)}^2\ dx 
	\leq C\left(T + \sum_{m=1}^M \Delta t 
	\int_{\Omega}\abs{\vc{f}_{h}^m}^2\ dx\right).
\end{equation*}
In view of the definition of $\eff$, this immediately yields 
\begin{align*}
	&\sum_{m=1}^M\Delta t \int_{\Omega} 
	\abs{p(\vrho_{h})}^{2}\ dx 
	\\ & \qquad 
	\leq C\left( T + \sum_{m=1}^M\Delta t 
	\int_{\Omega}\abs{\Div \vc{u}_{h}^m}^2\ dx 
	+ \sum_{m=1}^M \Delta t \int_{\Omega}
	\abs{\vc{f}_{h}^m}^2\ dx \right),
\end{align*}
which, due to Lemma \ref{lemma:stability} 
and $\vc{f}\in L^2((0,T)\times \Om)$, concludes the proof. 
\end{proof}

We conclude this section by establishing a weak time continuity of the density approximation. 
For this purpose we shall need the following technical lemma, which provides 
a bound on the artificial diffusion introduced by the upwind discretization 
of the continuity equation.

\begin{lemma} \label{lemma:productionbound} 
\solutiontext There exists a constant $C>0$, 
depending only on the shape regularity of $E_h$, 
the size of $|\Om|$, and the final time $T$, such that 
\begin{equation}\label{eq:productionerror}
	\begin{split}
		&\left| \sum_{E \in E_{h}}\int_{0}^T 
		\int_{\binner}\jump{\vrho_{h}}_{\partial E}(\vc{u}_{h} \cdot \nu)^-
		(\Pi_h^Q\phi - \phi)\ dS(x)dt\right|
		\\ &  \qquad \leq 
		C
		\norm{D\phi}_{L^\infty(0,T;\vc{L}^\infty(\Omega))}
		h^\frac{1}{2},
	\end{split}
\end{equation}
for any $\phi \in L^\infty(0,T;W^{1,\infty}(\Omega))$.
\end{lemma}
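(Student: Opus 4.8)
The plan is to estimate the left-hand side of \eqref{eq:productionerror} by a Cauchy--Schwarz splitting that pairs the jump term $\jump{\vrho_h}_{\partial E}$ (controlled by the artificial diffusion appearing in the stability estimate \eqref{eq:stabilityincerrors}) against the interpolation error $\Pi_h^Q\phi-\phi$ (controlled by Lemma~\ref{lemma:interpolation}). First I would rewrite the sum over elements as a sum over interior faces $\Gamma\in\Gamma^I_h$, so that on each face the integrand involves $\jump{\vrho_h}_\Gamma$, $\abs{\vc{u}_h\cdot\nu}$, and the face-trace of $\Pi_h^Q\phi-\phi$. On a single time level $t^k$ and a single face $\Gamma$, apply Cauchy--Schwarz in the form
\begin{equation*}
	\int_\Gamma \abs{\jump{\vrho_h^k}_\Gamma}\,\abs{\vc{u}_h^k\cdot\nu}^{1/2}\cdot
	\abs{\vc{u}_h^k\cdot\nu}^{1/2}\,\abs{\Pi_h^Q\phi-\phi}\ dS(x)
	\le \left(\int_\Gamma \jump{\vrho_h^k}_\Gamma^2\abs{\vc{u}_h^k\cdot\nu}\ dS\right)^{1/2}
	\left(\int_\Gamma \abs{\vc{u}_h^k\cdot\nu}\,\abs{\Pi_h^Q\phi-\phi}^2\ dS\right)^{1/2},
\end{equation*}
so the first factor is exactly (up to the weight $P''(\vrho_\dagger^k)$, which I would bound below using the positive lower bound on $\vrho_h$ from Lemma~\ref{lemma:vrho-props} together with the $L^\infty(L^\gamma)$ bound — or more simply absorb via the explicit form of the dissipation term in \eqref{eq:stabilityincerrors}) the dissipation controlled by the energy estimate.

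Next I would sum over $k$ and over faces, applying the discrete Cauchy--Schwarz inequality once more in $(k,\Gamma)$ to separate the two global factors. The sum over $k,\Gamma$ of the first factor squared, times $\Delta t$, is bounded by a constant via Lemma~\ref{lemma:stability}. For the second factor, I need $\sum_k\sum_\Gamma \Delta t\int_\Gamma \abs{\vc{u}_h^k\cdot\nu}\,\abs{\Pi_h^Q\phi-\phi}^2\ dS(x)$. Here $\abs{\Pi_h^Q\phi-\phi}\le Ch\norm{D\phi}_{L^\infty}$ pointwise by Lemma~\ref{lemma:interpolation} (in its $L^\infty$ form, or by the pointwise Bramble--Hilbert estimate for piecewise-constant interpolation), so this is bounded by $Ch^2\norm{D\phi}_{L^\infty(0,T;\vc{L}^\infty)}^2$ times $\sum_k\sum_\Gamma\Delta t\int_\Gamma\abs{\vc{u}_h^k\cdot\nu}\ dS(x)$. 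The latter face-integral of $\abs{\vc{u}_h\cdot\nu}$ is where the factor $h^{-1}$ enters: by the trace/inverse estimates (Lemmas~\ref{lemma:edgebounds} and \ref{lemma:inverse}) one has $\int_\Gamma\abs{\vc{u}_h^k\cdot\nu}\ dS(x)\le Ch^{-1}\int_{E}\abs{\vc{u}_h^k}\ dx$ for an adjacent element $E$, and summing over faces and using $\abs{\vc{u}_h^k}\le\norm{\vc{u}_h^k}_{\vc{L}^2}\abs{\Om}^{1/2}$ plus $\sum_k\Delta t\norm{\vc{u}_h^k}_{\vc{L}^2}^2\le C$ from Lemma~\ref{lemma:stability}, we get $\sum_k\sum_\Gamma\Delta t\int_\Gamma\abs{\vc{u}_h^k\cdot\nu}\ dS(x)\le Ch^{-1}$.

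Combining: the whole expression is bounded by $C\cdot 1^{1/2}\cdot\left(h^2\norm{D\phi}_{L^\infty}^2\cdot h^{-1}\right)^{1/2}=C\norm{D\phi}_{L^\infty(0,T;\vc{L}^\infty(\Om))}h^{1/2}$, which is exactly \eqref{eq:productionerror}. The step I expect to be the main obstacle is keeping the weights straight in the dissipation term: the energy estimate \eqref{eq:stabilityincerrors} produces $P''(\vrho_\dagger^k)\jump{\vrho_h^k}_\Gamma^2\abs{\vc{u}_h^k\cdot\nu}$, and since $P''(\vrho)=a\gamma\vrho^{\gamma-2}$ degenerates or blows up as $\vrho\to 0$ or $\vrho\to\infty$ depending on whether $\gamma<2$ or $\gamma>2$, I must use the definition \eqref{eq:vrhomin} of $\vrho_\dagger^k$ together with the lower bound on the density (Lemma~\ref{lemma:vrho-props}) and the $L^\infty(L^\gamma)$ bound to ensure $P''(\vrho_\dagger^k)$ is bounded below by a positive constant on the region where $\jump{\vrho_h}$ is being measured — or, alternatively, to estimate $\jump{\vrho_h^k}_\Gamma^2\abs{\vc{u}_h^k\cdot\nu}$ directly against $P''(\vrho_\dagger^k)\jump{\vrho_h^k}_\Gamma^2\abs{\vc{u}_h^k\cdot\nu}$ using a pointwise comparison. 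This is a technical bookkeeping matter rather than a conceptual difficulty, but it is the place where care is required; everything else is Cauchy--Schwarz plus the interpolation and inverse estimates already recorded in Section~\ref{sec:prelim}.
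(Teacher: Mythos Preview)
Your overall architecture—Cauchy--Schwarz on each face to split the jump from the interpolation error, then invoke the stability estimate for one factor and the interpolation/trace/inverse estimates for the other—is exactly the paper's plan, and your bookkeeping for the second factor (leading to $Ch^2\norm{D\phi}_{L^\infty}^2\cdot h^{-1}$) is essentially correct.

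The gap is precisely where you anticipate it, but it is not merely bookkeeping. You propose an \emph{unweighted} Cauchy--Schwarz, which forces you to bound $\sum_{m}\sum_{\Gamma}\Delta t\int_\Gamma\jump{\vrho_h^m}_\Gamma^2\abs{\vc{u}_h^m\cdot\nu}\,dS$. The stability estimate only controls this sum with the weight $P''(\vrho_\dagger^m)$, so you need a uniform-in-$h$ positive lower bound on $P''(\vrho_\dagger^m)$. Neither of your proposed fixes delivers this: Lemma~\ref{lemma:vrho-props} gives a positive lower bound on $\vrho_h^m$ that depends on $\norm{\Div \vc{u}_h^k}_{L^\infty(\Omega)}$, which is \emph{not} bounded independently of $h$ (only the $L^2$ norm is). And for $\gamma<2$ you would instead need an $L^\infty$ upper bound on $\vrho_\dagger^m=\max(\vrho_+^m,\vrho_-^m)$, which the $L^\infty(L^\gamma)$ estimate does not provide.

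The paper's remedy is to keep the weight inside the Cauchy--Schwarz: introduce an auxiliary convex function $B$ (equal to $z^\gamma$ for $\gamma\le 2$ and $z^2$ for $\gamma>2$) and split as
\[
I^2\le\Bigl(\sum_{m,\Gamma}\Delta t\int_\Gamma B''(\vrho_\dagger^m)\jump{\vrho_h^m}_\Gamma^2\abs{\vc{u}_h^m\cdot\nu}\Bigr)
\Bigl(\sum_{m,\Gamma}\Delta t\int_\Gamma \bigl(B''(\vrho_\dagger^m)\bigr)^{-1}\abs{\vc{u}_h^m\cdot\nu}\abs{\phi_h^m-\phi^m}^2\Bigr).
\]
The first factor is bounded by the renormalized scheme \eqref{FEM:renormalized} together with the higher integrability $\vrho_h\in_b L^{2\gamma}$ from Lemma~\ref{lemma:higherorderpressure} (needed when $\gamma>2$ to control $\int b(\vrho_h)\Div\vc{u}_h$). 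For the second factor, $(B'')^{-1}$ is either constant (when $\gamma>2$) or bounded by $C(1+\vrho_+^m+\vrho_-^m)$ (when $\gamma\le 2$), and the latter is controlled in $L^2$ again via Lemma~\ref{lemma:higherorderpressure}. So the missing ingredient in your argument is not a pointwise bound on $P''$, but the higher-integrability estimate on the density, used on the \emph{other} side of the weighted splitting.
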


\begin{proof}
We shall need the auxiliary function
$$
B(z) = 
\begin{cases}
	z^2,& \gamma > 2, \\
	z^\gamma, & \gamma \leq 2.
\end{cases}
$$
Moreover, set 
\begin{equation}\label{eq:phim-phimh}
	\phi^m(x) = \frac{1}{\Delta t}
	\int_{t^{m-1}}^{t^m}\phi(s,x)\ ds, 
	\quad 
	\phi_{h}^m = \Pi_{h}^Q \phi^m, 
	\quad  m=1,\ldots,M.
\end{equation}
Using $B''(z) > 0$ for $z>0$ and H\"older's inequality, we obtain 
\begin{align*}
		I^2 & :=\left|\sum_{m=1}^M\sum_{E \in E_{h}}
		\Delta t \int_{\binner}\jump{\vrho^m_{h}}_{\partial E}
		(\vc{u}^m_{h} \cdot \nu)^-
		(\phi^m_{h} - \phi^m)\ dS(x) \right|^2 \\ 
		& \leq \left(\sum_{m=1}^M 
		\sum_{E \in E_{h}}\Delta t
		\int_{\binner}B''(\vrho^m_{\dagger})
		\jump{\vrho^m_{h}}^2\abs{\vc{u}^m_{h} \cdot \nu}
		\ dS(x)\right) \\
		& \qquad \times \left(\sum_{m=1}^M 
		\sum_{E \in E_{h}}\Delta t\int_{\binner}
		\left(B''(\vrho^m_{\dagger})\right)^{-1}\abs{\vc{u}^m_{h} \cdot \nu}
		\abs{\phi^m_{h} - \phi^m}^2\ dS(x)  \right),
		\\ & =: I_1\times I_2,
\end{align*}
where the ``intermediate" numbers $\vrho_{\dagger}^m$ are 
defined in \eqref{eq:vrhomin}.

If $1 < \gamma \leq 2$, then Lemma \ref{lemma:stability} can be applied:
\begin{equation}\label{eq:contconveq1}
	I_1\leq C \int_{\Omega}B(\vrho_{0})\ dx
	+\sum_{m=1}^M \Delta t \int_{\Omega}
	\abs{\vc{f}_{h}^m}^2\ dx.
\end{equation}
However, \eqref{eq:contconveq1} continues to hold in the 
case $\gamma \geq 2$. This follows directly from 
the renormalized scheme \eqref{FEM:renormalized}, with $\phi_{h}:= 1$, 
together with the fact that 
\begin{align*}
	&\sum_{m=1}^M \Delta t \int_{\Omega}b(\vrho^m_{h}) 
	\Div \vc{u}^m_{h}\ dxdt \\
	&\qquad \leq \left(\sum_{m=1}^M \Delta t
	\int_{\Omega} \abs{\vrho_{h}^m}^4\ dx \right)^\frac{1}{2}
	\left(\sum_{m=1}^M 
	\Delta t\int_{\Omega} \abs{\Div \vc{u}_{h}^m}^2\ dx \right)^\frac{1}{2},
\end{align*}
which is bounded by Lemmas \ref{lemma:stability} 
and \ref{lemma:higherorderpressure}.

Next, using Lemma \ref{lemma:interpolation}, we have that
\begin{equation}\label{eq:prop1}
	\begin{split}
		I_2 & \leq h^2 \norm{D\phi}_{L^\infty(0,T;\vc{L}^\infty(\Omega))}^2
		\left(\sum_{m=1}^M\sum_{E \in E_{h}} \Delta t
		\int_{\binner}\left[\left(B''(\vrho^m_{\dagger})\right)^{-1}\right]^2
		\ dS(x)\right)^\frac{1}{2}
		\\ &\qquad\qquad\qquad\qquad\qquad\qquad\quad\times 
		\left(\sum_{m=1}^M\sum_{E \in E_{h}} \Delta t\int_{\binner}
		\abs{\vc{u}^m_{h} \cdot \nu}^2\ dS(x)\right)^\frac{1}{2}.
	\end{split}
\end{equation}
Thanks to Lemma \ref{lemma:edgebounds} and Lemma \ref{lemma:inverse},
\begin{equation}\label{eq:prop2}
	\int_{\partial E}\abs{\vc{u}^m_{h} \cdot \nu}^2\ dS(x) 
	\leq c\, h^{-1}\int_{E}\abs{\vc{u}_{h}^m}^2\ dx.
\end{equation}
Moreover, since
$$
\left(B''(\vrho_{\dagger}^m)\right)^{-1} 
\leq \abs{\vrho^m_{+}+ \vrho^m_{-}}^{2-\gamma} 
\leq C(1+ \vrho^m_{+} +\vrho^m_{-}),
$$
whenever $1< \gamma \le 2$, and 
$\left(B''(\vrho^m_{\dagger})\right)^{-1}
=\frac{1}{2}$, whenever $\gamma>2$, 
Lemma \ref{lemma:edgebounds} also gives
$$
\int_{\partial E} \left[\left(B''(\vrho^m_{\dagger})\right)^{-1}\right]^2\ dS(x) 
\leq Ch^{-1}\left(|E|+ \int_{E\,\cup\,\mathcal{N}(E)}
\abs{\vrho_{h}^m}^2\ dx\right),
$$
where $\mathcal{N}(E)$ denotes the union of the neighboring elements of $E$. 
Observe that
\begin{equation}\label{eq:prop3}
	\begin{split}
		&\sum_{E \in E_{h}} h^{-1}\left(|E|+\sum_{F \in \mathcal{N}(E)}
		\int_{F}\abs{\vrho_{h}^m}^2\ dx\right) \\
		& \qquad 
		\leq h^{-1}\left(|\Omega| 
		+ \int\limits_{\bigcup\limits_{E \in E_{h}} 
		\left(E\,\cup\,\mathcal{N}(E)\right)}
		|\vrho_{h}^m|^2\ dx\right) \\
		& \qquad\leq h^{-1}\left(|\Omega| + (N+2)\int_{\Omega}
		|\vrho_{h}^m|^2\ dx\right),
	\end{split}
\end{equation}
where we have utilized the fact that 
$$
\abs{\bigcup\limits_{E \in E_{h}} 
\left(E\,\cup\, \mathcal{N}(E)\right)} 
\leq (N+2)|\Omega|,
$$
which is true since the maximal cardinality of the 
set $\Set{(\mathcal{N}(E)\cup E)\cap F}_{E \in E_{h}}$ is 
$N+2$ for any $F \in E_{h}$. Inserting \eqref{eq:prop2} and \eqref{eq:prop3} 
into \eqref{eq:prop1}, we have arrived at
\begin{align*}
	I_2 &\leq h^2 \norm{D\phi}_{L^\infty(0,T;\vc{L}^\infty(\Omega))}^2
	h^{-\frac{1}{2}}h^{-\frac{1}{2}} 
	\\ &\qquad \quad\times 
	\left(T+\sum_{m=1}^M \Delta t \int_{\Omega} \abs{\vrho_{h}^m}^2\ dx\right)^\frac{1}{2} 
	\left(\sum_{m=1}^M \Delta t\int_{\Omega}\abs{\vc{u}_{h}^m}^2\ dx\right)^\frac{1}{2}
	\\& \leq  C\, h \norm{D\phi}_{L^\infty(0,T;\vc{L}^\infty(\Omega))}^2,
\end{align*}
where Lemmas \ref{lemma:stability} and \ref{lemma:higherorderpressure} 
have been used to work out the last inequality. 
This concludes the proof of \eqref{eq:productionerror}.
\end{proof}

To simplify the notation, let us introduce the 
interpolation operator
\begin{equation}
	\left(\Pi_{\mathcal{L}}f\right)(t) 
	= f^{m-1} + \frac{t - t^{m-1}}{\Delta t}(f^{m} - f^{m-1}), 
	\qquad t \in (t^{m-1},t^m).
\end{equation}

% Time continuity of the density
\begin{lemma}\label{lemma:timecont}
\solutiontext
Then
$$
\frac{\partial}{\partial t}
\left(\Pi_{\mathcal{L}}\vrho_{h}\right) 
\in_{b} L^1(0,T;W^{-1,1}(\Omega)).
$$
\end{lemma}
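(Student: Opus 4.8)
The plan is to test the discrete continuity scheme \eqref{FEM:contequation} against an arbitrary $\phi \in C^\infty_c(\Omega)$ (extended trivially in time on each interval) and show that the resulting action of $\frac{\partial}{\partial t}(\Pi_{\mathcal{L}}\vrho_h)$ is controlled by $\norm{\phi}_{W^{1,\infty}}$, uniformly in $h$, after integrating in $t$. Concretely, on $(t^{m-1},t^m)$ one has $\frac{\partial}{\partial t}(\Pi_{\mathcal{L}}\vrho_h) = \frac{1}{\Delta t}(\vrho_h^m - \vrho_h^{m-1})$, so
\begin{equation*}
	\int_0^T \int_\Omega \frac{\partial}{\partial t}(\Pi_{\mathcal{L}}\vrho_h)\,\phi \ dxdt
	= \sum_{m=1}^M \int_\Omega (\vrho_h^m - \vrho_h^{m-1})\,\phi^m \ dx,
\end{equation*}
where $\phi^m$ is the time-average of $\phi$ over $(t^{m-1},t^m)$ as in \eqref{eq:phim-phimh}. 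The first step is to replace $\phi^m$ by its piecewise-constant interpolant $\phi_h^m = \Pi_h^Q \phi^m$, which is a legal test function in \eqref{FEM:contequation}; the error is exactly the quantity controlled by Lemma \ref{lemma:productionbound} (more precisely, its proof shows that after the Green's-theorem manipulation of Remark \ref{rem:E-VS-Gamma} the face terms involving $\Pi_h^Q\phi-\phi$ are $O(h^{1/2})\norm{D\phi}_{L^\infty(L^\infty)}$, hence negligible).

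Having reduced to $\phi_h^m \in Q_h$, I would invoke the scheme \eqref{FEM:contequation} itself to rewrite $\sum_m \int_\Omega (\vrho_h^m - \vrho_h^{m-1})\phi_h^m\ dx$ as the upwind flux sum
\begin{equation*}
	\sum_{m=1}^M \Delta t \sum_{\Gamma \in \Gamma_h^I} \int_\Gamma \left(\vrho_-^m (\vc{u}_h^m \cdot \nu)^+ + \vrho_+^m (\vc{u}_h^m \cdot \nu)^-\right)\jump{\phi_h^m}_\Gamma \ dS(x).
\end{equation*}
The second step is to bound this: write $\jump{\phi_h^m}_\Gamma = \jump{\phi_h^m - \phi^m}_\Gamma + \jump{\phi^m}_\Gamma$, noting $\jump{\phi^m}_\Gamma = 0$ since $\phi^m$ is continuous, so again only the interpolation defect survives, and $|\jump{\phi_h^m - \phi^m}_\Gamma| \lesssim h \norm{D\phi}_{L^\infty(L^\infty)}$ by Lemma \ref{lemma:interpolation}. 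It then remains to estimate $\sum_m \Delta t \sum_\Gamma \int_\Gamma (|\vrho_h^m| |\vc{u}_h^m \cdot \nu|)\, h\, dS(x)$. Converting the face integrals to element integrals via Lemma \ref{lemma:edgebounds} (each $\Gamma \subset \partial E$ contributes $h^{-1}\int_E$), Cauchy–Schwarz in space gives $\sum_m \Delta t\, \norm{\vrho_h^m}_{L^2(\Omega)}\norm{\vc{u}_h^m}_{L^2(\Omega)}$, which is finite and $h$-independent by Lemma \ref{lemma:stability} together with the higher integrability $\vrho_h \in_{\mathrm b} L^2((0,T)\times\Omega)$ of Lemma \ref{lemma:higherorderpressure}. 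Since $C^\infty_c(\Omega)$ is dense in $W^{1,\infty}_0(\Omega) \hookrightarrow C_0(\Omega)$, whose dual contains $W^{-1,1}$, and the bound is linear in $\norm{\phi}_{W^{1,\infty}}$, this yields $\frac{\partial}{\partial t}(\Pi_{\mathcal{L}}\vrho_h)\in_{\mathrm b} L^1(0,T;W^{-1,1}(\Omega))$ after a duality/closure argument (one gets the bound for a.e.\ fixed $t$-slice, or equivalently against $\phi \in L^\infty(0,T;W^{1,\infty})$, from which the $L^1(W^{-1,1})$ statement follows).

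The main obstacle is bookkeeping rather than conceptual: one must carefully separate the genuine upwind flux (where $\jump{\phi^m}=0$ saves a full power of $h$) from the interpolation and quadrature defects, and then feed each piece the right a priori estimate so that the surviving power of $h$ is nonnegative. In particular the term $\sum_\Gamma \int_\Gamma |\vrho_h||\vc u_h\cdot\nu|\,h\,dS$ is borderline: the trace-to-volume conversion costs $h^{-1}$ and the inverse/trace estimates must be arranged (as in the proof of Lemma \ref{lemma:productionbound}) so that the remaining factor is exactly $h^0 \cdot \norm{\vrho_h}_{L^2}\norm{\vc u_h}_{L^2}$, at which point $\Delta t = O(h)$ plays no further role and Lemmas \ref{lemma:stability}--\ref{lemma:higherorderpressure} close the estimate. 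The higher-integrability bound on $\vrho_h$ is essential here; the $L^\infty(L^\gamma)$ bound alone would not suffice to pair against $\vc u_h \in L^2(\vc L^2)$ in two or three dimensions for $\gamma$ close to $1$.
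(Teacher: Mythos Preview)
Your argument is correct in substance, and the second step—writing $\jump{\phi_h^m}_\Gamma=\jump{\phi_h^m-\phi^m}_\Gamma$, extracting one factor of $h$ from Lemma~\ref{lemma:interpolation}, and canceling it against the $h^{-1}$ from the trace estimate of Lemma~\ref{lemma:edgebounds}—is a clean way to close the bound using only $\vrho_h\in_{\mathrm b} L^2$ and $\vc{u}_h\in_{\mathrm b} L^2$ from Lemmas~\ref{lemma:stability} and~\ref{lemma:higherorderpressure}. One small clarification: your ``first step'' incurs no error at all. Since $\vrho_h^m-\vrho_h^{m-1}\in Q_h$ and $\Pi_h^Q$ is the $L^2$-orthogonal projection onto $Q_h$, one has $\int_\Omega(\vrho_h^m-\vrho_h^{m-1})\phi^m\,dx=\int_\Omega(\vrho_h^m-\vrho_h^{m-1})\phi_h^m\,dx$ exactly; there is nothing for Lemma~\ref{lemma:productionbound} to control here.

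The paper's proof takes a slightly different route: after testing with $\phi_h^m$, it applies Green's theorem (as in the manipulation preceding \eqref{eq:tcont1}) to rewrite the upwind face sum as the volume advection term $\int_\Omega \vrho_h^m\vc{u}_h^m\, D\phi^m\,dx$ plus a residual face term involving $\jump{\vrho_h^m}_{\partial E}(\vc{u}_h^m\cdot\nu)^-(\phi_h^m-\phi^m)$, the latter being exactly what Lemma~\ref{lemma:productionbound} was designed to bound by $Ch^{1/2}\|D\phi\|_{L^\infty}$. That decomposition is slightly more work here but pays off later: the same identity is reused verbatim in the convergence proof for the continuity scheme (Lemma~\ref{lemma:densityconv}), where the volume term $\int\vrho_h\vc{u}_h\,D\phi$ is precisely the quantity whose limit one needs via Lemma~\ref{lemma:convergenceofrhou}. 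Your direct estimate is perfectly adequate for the present lemma, but it does not isolate that term.
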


\begin{proof}
Fix $\phi \in L^\infty(0,T;W^{1,\infty}(\Omega))$, and recall the 
definitions of $\phi^m$, $\phi_{h}^m$, cf.~\eqref{eq:phim-phimh}. 
The continuity scheme \eqref{FEM:contequation} 
with $\phi_{h}^m$ as test function reads
\begin{equation}\label{eq:tcont1}
	\begin{split}
		& \Delta t\int_{\Omega} \frac{d}{dt}
		\left(\Pi_{\mathcal{L}}\vrho_{h}\right)\phi^m\ dxdt 
		\\ & \qquad 
		= \Delta t\sum_{\Gamma \in \Gamma^I_{h}}
		\int_\Gamma\left(\vrho^m_-(\vc{u}^{m}_h \cdot \nu)^+ 
		+\vrho^m_+(\vc{u}^{m}_h \cdot \nu)^-\right)
		\jump{\phi^m_{h}}_\Gamma\ dS(x).
	\end{split}
\end{equation}
Since the traces of $\phi^m$ taken from either side of a 
face are equal, we can write
\begin{align*}
	& \sum_{\Gamma \in \Gamma^I_{h}}
	\int_\Gamma \left(\vrho^m_-(\vc{u}^{m}_h \cdot \nu)^+ 
	+\vrho^m_+(\vc{u}^{m}_h \cdot \nu)^-\right)\jump{\phi^m_{h}}_\Gamma\ dx \\
	&=\sum_{\Gamma \in \Gamma^I_{h}}\int_{\Gamma} \left( \vrho^m_{+}(\vc{u}^m_{h} \cdot \nu)^- 
	+ \vrho^m_{-}(\vc{u}^m_{h} \cdot \nu)^+\right)\jump{\phi^m_{h} - \phi^m}\ dS(x), \\
	&= -\sum_{E \in E_{h}}\int_{\binner} \left( \vrho^m_{+}(\vc{u}^m_{h} \cdot \nu)^+ 
	+ \vrho^m_{-}(\vc{u}^m_{h} \cdot \nu)^-\right)(\phi^m_{h}- \phi^m)\ dS(x), \\
	&= \int_{\Omega}- \Div (\vrho^m_{h}\vc{u}^m_{h}(\phi^m_{h} - \phi^m))\ dx 
	+ \sum_{E \in E_{h}}\int_{\binner }\jump{\vrho^m_{h}}_{\partial E}(\vc{u}^m_{h} \cdot \nu)^-
	(\phi^m_{h} - \phi^m)\ dS(x) \\
	&= \int_{\Omega}\vrho^m_{h}\vc{u}^m_{h}\cdot D\phi^m\ dx  
	+ \sum_{E \in E_{h}}\int_{\binner}
	\jump{\vrho^m_{h}}_{\partial E}(\vc{u}^m_{h} \cdot \nu)^-
	(\phi^m_{h} - \phi^m)\ dS(x).
\end{align*}

By summing \eqref{eq:tcont1} over $m$, taking 
absolute values, and using the above identity, we find
\begin{equation*}
	\begin{split}
		&\left|\sum_{m=1}^M\Delta t \int_{\Omega} 
		\frac{d}{dt}\left(\Pi_{\mathcal{L}}
		\vrho_{h}\right)\phi^m\ dxdt\right| \\
		&\qquad \qquad \leq \left|\sum_{m=1}^M \Delta t \int_{\Omega}\vrho_{h}^m 
		\vc{u}_{h}^m D\phi^m\ dx\right| \\
		&\qquad \qquad \qquad+\left|\sum_{m=1}^M \sum_{E \in E_{h}}\Delta t
		\int_{\binner}\jump{\vrho^m_{h}}_{\partial E}(\vc{u}^m_{h} \cdot \nu)^-
		(\phi_{h}^m - \phi^m)\ dS(x)\right|.
	\end{split}
\end{equation*}
Using Lemma \ref{lemma:productionbound}, together with an 
application of H\"older's inequality, we deduce
\begin{align*}
	&\left|\sum_{m=1}^M\Delta t \int_{\Omega} 
	\frac{d}{dt}\left(\Pi_{\mathcal{L}}
	\vrho_{h}\right)\phi^m\ dx\right| \\
	&\leq \left(\sum_{m=1}^M \Delta t 
	\int_{\Omega}|\vrho_{h}^m|^2\ dx\right)^\frac{1}{2}
	\left(\sum_{m=1}^M \Delta t
	\int_{\Omega}\abs{\vc{u}_{h}^m}^2 \right)^\frac{1}{2}
	\norm{D\phi}_{L^\infty(0,T;\vc{L}^\infty(\Omega))} \\
	&\qquad \qquad + C \, h^\frac{1}{2}
	\norm{D\phi}_{L^\infty(0,T;\vc{L}^\infty(\Omega))}.
\end{align*}
By Lemmas \ref{lemma:stability} and \ref{lemma:higherorderpressure}, the first two 
factors on the right--hand side is bounded, so we conclude that
\begin{align*}
	&\left|\int_{\Delta t}^T\int_{\Omega}\frac{d}{dt}
	\left(\Pi_{\mathcal{L}}\vrho_{h}\right)\phi\ dxdt\right| \\
	&=\left|\sum_{m=1}^M\Delta t \int_{\Omega} \frac{d}{dt}
	\left(\Pi_{\mathcal{L}}\vrho_{h}\right)\phi^m\ dx\right| 
	\leq C\, (1+h^\frac{1}{2})
	\norm{D\phi}_{L^\infty(0,T;\vc{L}^\infty(\Omega))}.
\end{align*}
\end{proof}

\section{Convergence} \label{sec:conv}

\solutiontext In this section we establish that a subsequence 
of $\{\left(\vrho_{h}, \vc{w}_{h}, \vc{u}_{h}\right)\}_{h>0}$ 
converges to a weak solution of the semi--stationary Stokes system, thereby proving 
Theorem \ref{theorem:mainconvergence}. The proof is divided into 
several steps: 
\begin{enumerate}
	\item{}Convergence of the continuity scheme.
	\item{}Weak sequential continuity of the discrete viscous flux.
	\item{}Strong convergence of the density.
	\item{}Convergence of the velocity scheme.
\end{enumerate}

Our starting point is that the results of Section \ref{sec:basic-est} assure us that the approximate 
solutions $(\vc{w}_{h}, \vc{u}_{h}, \vrho_{h})$ satisfy the following 
$h$--independent bounds:
$$
\vrho_{h} \in_{b} L^\infty(0,T;L^\gamma(\Omega))
\cap L^{2\gamma}((0,T)\times \Omega)
$$
and
$$
\vc{w}_{h} \in_{b} L^2(0,T;\vc{W}^{\Curl ,2}_{0}(\Omega)), 
\quad \vc{u}_{h} \in_{b} L^2(0,T;\vc{W}^{\Div,2}_{0}(\Omega)).
$$
Consequently, we may assume that there exist functions 
$\vrho,\vc{w},\vc{u}$ such that 
\begin{equation}\label{eq:basic-conv}
	\begin{split}
		& \vrho_{h} \overset{h\to 0}{\weak} \vrho, \quad 
		\text{in $L^\infty(0,T;L^\gamma(\Omega))
		\cap L^{2\gamma}((0,T)\times \Omega)$},\\
		& \vc{w}_{h} \overset{h\to 0}{\weak} \vc{w}, \quad 
		\text{in $L^2(0,T;\vc{W}^{\Curl ,2}_{0}(\Omega))$}, \\
		& \vc{u}_{h} \overset{h\to 0}{\weak} \vc{u}, \quad 
		\text{in $L^2(0,T;\vc{W}^{\Div,2}_{0}(\Omega))$}.
	\end{split}
\end{equation}
Moreover, 
\begin{equation*}
	\vrho_h^\gamma \overset{h\to 0}{\weak}\overline{\vrho^\gamma}, 
	\quad 
	\vrho_h^{\gamma+1} 
	\overset{h\to 0}{\weak}
	\overline{\vrho^{\gamma+1}}, 
	\quad
	\vrho_h\log\vrho_h \overset{h\to 0}{\weak} \overline{\vrho\log\vrho},
\end{equation*}
where each $\overset{h\to 0}\weak$ signifies weak convergence 
in a suitable $L^p$ space with $p>1$.

Finally, $\vrho_h$, $\vrho_h\log\vrho_h$ converge 
respectively to $\vrho$, $\overline{\vrho\log\vrho}$ 
in $C([0,T];L^p_{\text{weak}}(\Om))$ for some 
$1<p<\gamma$, cf.~Lemma \ref{lem:timecompactness} 
and also \cite{Feireisl:2004oe,Lions:1998ga}. 
In particular, $\vrho$, $\vrho\log \vrho$, and 
$\overline{\vrho\log\vrho}$ belong to $C([0,T];L^p_{\text{weak}}(\Om))$.

\subsection{Density scheme} \label{subsec:conv-density}

\begin{lemma}[Convergence of $\vrho_{h} \vc{u}_{h}$]\label{lemma:convergenceofrhou}
Given \eqref{eq:basic-conv},
$$
\vrho_{h}\vc{u}_{h} \overset{h\to 0}{\weak} 
\vrho\vc{u} \quad
\text{in the sense of distributions on $\Dom$.} 
$$
\end{lemma}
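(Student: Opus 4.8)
The plan is to prove the convergence $\vrho_h\vc{u}_h \weak \vrho\vc{u}$ in $\mathcal{D}'(\Dom)$ by combining the weak convergences in \eqref{eq:basic-conv} with a compactness argument that upgrades one of the two factors to strong convergence in an appropriate topology. The product of two merely weakly convergent sequences does not converge to the product of the limits in general, so the strategy is the classical one: show that $\vrho_h \to \vrho$ strongly in $C([0,T]; W^{-1,2}(\Om))$ (or some negative Sobolev space), while $\vc{u}_h \weak \vc{u}$ weakly in $L^2(0,T;\vc{L}^2(\Om))$; the pairing of a strongly convergent sequence in $W^{-1,2}$ with a weakly convergent sequence in $W^{1,2}$-type spaces then passes to the limit. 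Since $\vc{u}_h \in_b L^2(0,T;\mcw(\Om))$ and $\norm{\cdot}_{\mcw}$ is equivalent to the $\vc{H}^1$-norm on the relevant subspace, we do have the requisite $L^2(0,T;\vc{H}^1(\Om))$ bound on $\vc{u}_h$ to make this duality work.

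First I would collect the two ingredients. From Lemma \ref{lemma:stability} we have $\vrho_h \in_b L^\infty(0,T;L^\gamma(\Om))$ (hence $\in_b L^\infty(0,T; L^2(\Om))$ after noting $\vrho_h\in_b L^{2\gamma}$, or working with $L^\gamma \hookrightarrow W^{-1,2}$ via Sobolev embedding when $\gamma \geq \frac{2N}{N+2}$, which holds here), and from Lemma \ref{lemma:timecont} we have $\partial_t(\Pi_{\mathcal L}\vrho_h) \in_b L^1(0,T; W^{-1,1}(\Om))$. One first checks that $\Pi_{\mathcal L}\vrho_h$ and $\vrho_h$ have the same weak-$\star$ limit and that $\norm{\Pi_{\mathcal L}\vrho_h - \vrho_h}$ is small in a suitable sense (they differ only in how the piecewise-constant-in-time profile is connected, and the jump terms are controlled by the stability estimate and $\Dt = O(h)$). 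Then, since $L^\gamma(\Om) \hookrightarrow W^{-1,2}(\Om)$ compactly (for $N=2,3$, $\gamma>1$ this embedding is at least continuous; one uses $L^\gamma \hookrightarrow\hookrightarrow W^{-1,2}$ when $\gamma > \frac{2N}{N+2}$, and otherwise interpolates with the $L^{2\gamma}((0,T)\times\Om)$ bound to get a spatial space that embeds compactly into $W^{-1,2}$), an Aubin–Lions–Simon type argument applied to $\Pi_{\mathcal L}\vrho_h$ — bounded in $L^\infty(0,T;L^\gamma)$ with time derivative bounded in $L^1(0,T;W^{-1,1})$ — yields a subsequence converging strongly in $C([0,T]; W^{-1,2}(\Om))$, with limit necessarily $\vrho$. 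Alternatively, one can invoke Lemma \ref{lem:timecompactness} directly: $\vrho_h$ is bounded in $L^\infty(0,T;L^\gamma)$ and the maps $t\mapsto \langle \vrho_h(t),\Phi\rangle$ are equicontinuous for $\Phi$ in a dense subset of $(L^\gamma)^\star$ by the bound on $\partial_t\Pi_{\mathcal L}\vrho_h$, giving convergence in $C([0,T];L^p_{\mathrm{weak}}(\Om))$; combined with the compact embedding $L^p(\Om)\hookrightarrow\hookrightarrow W^{-1,2}(\Om)$ this again promotes to strong convergence in $C([0,T];W^{-1,2}(\Om))$.

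With $\vrho_h \to \vrho$ strongly in $C([0,T];W^{-1,2}(\Om))$ and $\vc{u}_h \weak \vc{u}$ in $L^2(0,T;\vc{H}^1(\Om)\cap\mcw(\Om))$, the conclusion follows: for any test function $\vc\psi \in \vc{C}_c^\infty(\Dom)$, write
\begin{equation*}
	\iint \vrho_h\vc{u}_h\cdot\vc\psi \, dx dt
	= \int_0^T \langle \vrho_h(t), \vc{u}_h(t)\cdot\vc\psi(t)\rangle_{W^{-1,2},W^{1,2}}\, dt,
\end{equation*}
and note that $\vc{u}_h\cdot\vc\psi \weak \vc{u}\cdot\vc\psi$ in $L^2(0,T;W^{1,2}(\Om))$ while $\vrho_h \to \vrho$ strongly in $L^2(0,T;W^{-1,2}(\Om))$ (since $C([0,T];W^{-1,2})\hookrightarrow L^2(0,T;W^{-1,2})$); the product of a strongly convergent sequence and a weakly convergent sequence in dual Banach spaces converges to the product of the limits, giving $\iint \vrho_h\vc{u}_h\cdot\vc\psi \to \iint \vrho\vc{u}\cdot\vc\psi$.

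The main obstacle I expect is the first step: establishing strong compactness of $\vrho_h$ in a negative Sobolev space with only the weak data available (an $L^\infty_t L^\gamma_x$ bound with $\gamma$ possibly close to $1$, and a time-derivative bound only in the very weak space $L^1(0,T;W^{-1,1})$). One must be careful that the chosen spatial space into which $L^\gamma$ (or $L^{2\gamma}$) embeds compactly is dual to a space in which $\vc{u}_h\cdot\vc\psi$ is bounded — this is where the equivalence of $\norm{\cdot}_\mcw$ with the $\vc{H}^1$-norm, and hence the $\vc{H}^1$ (not merely $\mcw$) regularity of $\vc{u}_h$, is essential, and one should also double-check the interplay with $\Dt=O(h)$ when comparing $\vrho_h$ with its time-interpolant $\Pi_{\mathcal L}\vrho_h$. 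A secondary technical point is handling the boundary/trace of the test function, but since $\vc\psi$ has compact support in $\Dom$ this causes no difficulty.
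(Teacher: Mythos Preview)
Your proposal has a genuine gap at its very foundation: the claim that $\vc{u}_h \in_b L^2(0,T;\vc{H}^1(\Om))$ is false. The velocity approximations live in the div-conforming N\'ed\'elec space $\vc{V}_h \subset \vc{W}^{\Div,2}_0(\Om)$, whose elements have continuous \emph{normal} traces across faces but discontinuous \emph{tangential} traces; consequently $\Curl \vc{u}_h \notin L^2(\Om)$ in general, and $\vc{u}_h \notin \mcw(\Om)$. The equivalence of $\norm{\cdot}_{\mcw}$ with the $\vc{H}^1$ norm is a statement about the \emph{continuous} space $\mcw(\Om)$; it says nothing about the discrete velocities. The auxiliary variable $\vc{w}_h$ is bounded in $\vc{W}^{\Curl,2}$, but $\vc{w}_h$ is \emph{not} $\Curl \vc{u}_h$ --- it is only a weak discrete curl. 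Thus the duality pairing $\langle \vrho_h,\vc{u}_h\cdot\vc{\psi}\rangle_{W^{-1,2},W^{1,2}}$ in your final display is not available, and the whole Aubin--Lions route breaks down. This is precisely the difficulty the paper singles out in the introduction (``the element spaces utilized for the velocity approximations are merely div or curl conforming'').

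The paper's argument is substantially different. It first uses the discrete Hodge decomposition $\vc{u}_h = \Curl \vc{\zeta}_h + \vc{z}_h$ with $\vc{z}_h \in \vc{V}_h^{0,\perp}$. The curl part is shown to converge \emph{strongly} in $L^2(0,T;\vc{L}^2(\Om))$ (Lemma \ref{lemma:compactcurlpart}), because it depends only on $\vc{f}_h$ and $\vc{w}_h$, so $\vrho_h \Curl\vc{\zeta}_h \weak \vrho\, \Curl\vc{\zeta}$ follows easily. For the delicate part $\vrho_h \vc{z}_h$, the paper proves a \emph{spatial translation estimate} for $\vc{z}_h$ (Lemma \ref{lemma:spacetranslation2} and the appendix) --- this plays the role your missing $H^1$ bound would have played --- then mollifies in $x$ to $\vc{z}_h^\epsilon$, uses the translation estimate to make the error $\vc{z}_h - \vc{z}_h^\epsilon$ small uniformly in $h$, and finally, for fixed $\epsilon$, writes $\vrho_h\vc{z}_h^\epsilon$ as a discrete time derivative of $\vrho_h \vc{Z}_h^\epsilon$ minus $\vc{Z}_h^\epsilon\partial_t(\Pi_{\mathcal L}\vrho_h)$, where $\vc{Z}_h^\epsilon$ is a time primitive that converges strongly in $C^k$. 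The bound $\partial_t(\Pi_{\mathcal L}\vrho_h)\in_b L^1(0,T;W^{-1,1})$ then suffices to pass to the limit in each piece.
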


\begin{proof}
By virtue of Lemma \ref{lemma:hodge}, there exist 
sequences $\{\vc{\zeta}_{h}\}_{h>0}$, 
$\{\vc{z}_{h}\}_{h>0}$ satisfying
\begin{align*}
	& \vc{u}_{h}(\cdot,t) = \Curl \vc{\zeta}_{h}(\cdot,t)
	+\vc{z}_{h}(\cdot,t),
	\\ & \vc{\zeta}_{h}(\cdot,t) \in \vc{W}_{h}^{0,\perp},
	\quad \vc{z}_{h}(t,\cdot) \in \vc{V}_{h}^{0,\perp},
\end{align*}
for all $t\in (0,T)$. In Lemma \ref{lemma:compactcurlpart} below we prove that 
$$
\Curl \vc{\zeta}_{h} \to \Curl \vc{\zeta} 
\quad \text{in $L^2(0,T;\vc{L}^2(\Omega))$.}
$$
As a consequence, $\Curl \vc{\zeta}_{h}\,\vrho_{h} \weak \Curl \vc{\zeta}\,\vrho$ 
in the sense of distributions. 

It remains to prove that
$$
\vrho_{h}\vc{z}_{h} \weak \vrho \vc{z} 
\quad \text{in the sense of distributions.}
$$
To this end, we adapt the proof of \cite[Lemma 5.1]{Lions:1998ga} to 
our specific discrete setting. We begin by introducing the regularized field 
$\vc{z}_{h}^\epsilon=\kappa^\epsilon \underset{(x)}{\star} \vc{z}_{h}$, 
where $\kappa^\epsilon$ is a standard regularizing kernel 
and $\underset{(x)}{\star}$ denotes the convolution product (in $x$). 
Lemma \ref{lemma:spacetranslation2} guarantees that 
$$
\norm{\vc{z}_{h}^\epsilon-\vc{z}_{h}}_{L^2(0,T;\vc{L}^2(\Omega))} 
\to 0 \quad \textrm{as $\epsilon \to 0$, uniformly in $h$.}
$$
In addition, for any $k$ and $p$, since 
$\vc{z}_{h}^\epsilon \in L^2(0,T;\vc{W}^{k,p}(\Omega))$ 
we have that $\vc{z}_{h}^\epsilon \overset{h\to 0}{\weak} 
\vc{z}^\epsilon$ in $L^2(0,T;\vc{W}^{k,p}(\Omega))$.
Moreover, $\vc{z}^\epsilon \overset{\epsilon\to 0}{\to} \vc{z}$ in 
$L^2(0,T;\vc{L}^2(\Omega))$. Hence, by writing 
$\vrho_{h}\vc{z}_{h} = \vrho_{h}(\vc{z}_{h}-\vc{z}_{h}^\epsilon)
+\vrho_{h}\vc{z}_{h}^\epsilon$ it suffices 
to prove $\vrho_{h}\vc{z}_{h}^\epsilon 
\overset{h\to 0}{\weak} \vrho \vc{z}^\epsilon$ for each fixed $\epsilon>0$.

Next, let us introduce auxiliary functions 
$\vc{Z}^{\epsilon,m}_{h} 
\in \vc{V}_{h}$, $m=1,\ldots,M$, defined by
$$
\vc{Z}^{\epsilon,m}_{h}(x)
=\Delta t \sum_{k=0}^m \vc{z}^{\epsilon,m}_{h}(x), 
\qquad \vc{z}^{\epsilon,m}_{h}=
\kappa^\epsilon \star\vc{z}_{h}^m.
$$
We extend $\{\vc{Z}^{\epsilon,m}_{h}\}_{m=1}^M$ to 
a function $\vc{Z}_{h}^\epsilon$ defined on $(-\Delta t, T]\times \Om$ by setting
$$
\vc{Z}_{h}^\epsilon(t,\cdot) 
= \vc{Z}_{h}^{\epsilon,m}(\cdot), 
\qquad t \in (t^{m-1}, t^m], \quad m=1,\ldots, M,
$$
and $\vc{Z}_h^\epsilon(t,\cdot) = \vc{Z}_h^{\epsilon,0}$, for $t \in (-\Delta t , 0]$.
In view of the regularity of $\vc{z}_{h}^\epsilon$,
\begin{equation}\label{eq:rhou-strongconvwhenintegrated-in-time}
	\vc{Z}_{h}^\epsilon(t,\cdot) \to 
	\vc{Z}^\epsilon(t,\cdot) = \int_{0}^{t}\vc{z}^\epsilon(s,\cdot) \ ds
	\quad \text{in $C^k(\Omega)$ for any $k \geq 0$,}
\end{equation}
uniformly in $t$ on $[0,T]$.

Now, we write
\begin{align*}
	\vrho_{h}^m\vc{z}_{h}^{\epsilon,m}
	=\frac{\vrho_{h}^m \vc{Z}_{h}^{\epsilon,m} 
	- \vrho^{m-1}_{h}\vc{Z}^{\epsilon,m-1}_{h}}{\Delta t}
	- \vc{Z}_{h}^{\epsilon,m-1}
	\frac{\vrho_{h}^m-\vrho^{m-1}_{h}}{\Delta t},
\end{align*}
which alternatively can be written as
$$
\vrho_{h}\vc{z}_{h}^\epsilon 
= \frac{\partial}{\partial t}\Pi_{\mathcal{L}}\left(\vrho_{h}\vc{Z}_{h}^\epsilon\right) 
- \vc{Z}_{h}^\epsilon(\cdot - \Delta t,\cdot)
\frac{\partial}{\partial t} \left(\Pi_{\mathcal{L}}\vrho_{h}\right),
$$
on $(t_{m-1},t_m]\times \Om$, $m=1,\dots,M$.  

Fix $\phi \in C^\infty_{c}(\Dom)$. Summation by parts gives
\begin{align*}
	&\int_{0}^T \int_{\Omega}
	\frac{\partial}{\partial t}\Pi_{\mathcal{L}}
	\left(\vrho_{h}\vc{Z}_{h}^\epsilon\right) \phi\ dxdt 
	\\ & \qquad 
	= -\int_{\Delta t}^T\int_{\Omega} \vrho_{h}(t-\Delta t,x) 
	\vc{Z}_{h}^\epsilon(t-\Delta t,x) 
	\frac{\partial}{\partial t} \left(\Pi_{\mathcal{L}}\phi_{h}\right)\ dxdt,
\end{align*}
where $\phi_{h}(t,\cdot)=\frac{1}{\Delta t}
\int_{t^{m-1}}^{t^m}\phi(s,\cdot) \ ds$ for $t\in (t_{m-1},t_{m})$. 

Thanks to \eqref{eq:basic-conv} and 
\eqref{eq:rhou-strongconvwhenintegrated-in-time}, 
$\vrho_{h}\vc{Z}_{h}^\epsilon \overset{h\to 0}{\weak} 
\vrho \vc{Z}^\epsilon$ in $L^{2\gamma}(0,T;L^{2\gamma}(\Omega))
\cap L^{\infty}(0,T;L^\gamma(\Omega))$, and hence
$$
\frac{\partial}{\partial t}
\Pi_{\mathcal{L}}\left(\vrho_{h}\vc{Z}_{h}^\epsilon\right) 
\overset{h\weak 0}{\weak} 
\frac{\partial}{\partial t}\left(\vrho \vc{Z}^\epsilon\right) 
\quad \text{in the sense of distributions on $\Dom$.}
$$

In addition, Lemma \ref{lemma:timecont} tells us that
$\frac{\partial}{\partial t} \left(\Pi_{\mathcal{L}} \vrho_{h}\right)
\in_{b} L^1(0,T;W^{-1,1}(\Omega))$, and thus
$$
\vc{Z}_{h}^\epsilon(\cdot-\Delta t,\cdot)\frac{\partial}{\partial t} 
\left(\Pi_{\mathcal{L}} \vrho_{h}\right) 
\overset{h\to 0}{\weak} 
\vc{Z}^\epsilon \frac{\partial}{\partial t} \vrho
$$
in the sense of distributions on $\Dom$.

We conclude observing that
$\vrho \vc{z}^\epsilon = \frac{d}{dt}\left(\vrho \vc{Z}^\epsilon\right) 
- \vc{Z}^\epsilon \frac{\partial \vrho}{\partial t}$.
\end{proof}

In the proof of the previous lemma we utilized 

\begin{lemma}\label{lemma:compactcurlpart} 
Given \eqref{eq:basic-conv}, define $\Set{(\vc{\zeta}_{h},\vc{z}_{h})}_{h>0}$ in terms 
of the decomposition $\vc{u}_{h}(t,\cdot) = 
\Curl \vc{\zeta}_{h}(t,\cdot) + \vc{z}_{h}(t,\cdot)$ with 
$\vc{\zeta}_{h}(t,\cdot) \in \vc{W}_{h}^{0,\perp}$, 
$\vc{z}_{h}(t,\cdot) \in \vc{V}_{h}^{0,\perp}$, $t \in (0,T)$. Then
\begin{equation}\label{eq:wh-curlwh-strongconv}
	\vc{w}_{h}  \overset{h\to 0}{\to} \vc{w}, \quad 
	\Curl\vc{\zeta}_{h} \overset{h\to 0}{\to} \Curl \vc{\zeta} 
	\quad \text{in $L^2(0,T;\vc{L}^2(\Omega))$.}
\end{equation}
\end{lemma}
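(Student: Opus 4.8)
The guiding observation is that, thanks to the discrete Hodge decomposition \eqref{eq:Vh-decomp}, the pair $(\vc{w}_h,\vc{\zeta}_h)$ solves a \emph{linear} discrete mixed problem whose only data is the source term $\vc{f}_h$. Since $\vc{f}_h\to\vc{f}$ strongly in $\vc{L}^2(\Dom)$ (which is immediate from the definition of $\vc{f}_h$, as $\vc{f}\in\vc{L}^2(\Dom)$), the weak convergences already available can be promoted to strong convergence of the relevant curls by a ``weak convergence plus convergence of norms'' argument; no time compactness is needed.

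First I would isolate the decoupled system. Using the discrete identity $\Div \Curl \vc{\eta}_h=0$ ($\vc{\eta}_h\in\vc{W}_h$), I test the first line of \eqref{FEM:momentumeq} with $\vc{v}_h=\Curl \vc{\eta}_h$: the divergence and pressure contributions vanish and there remains
\[
\int_\Omega\mu\,\Curl \vc{w}_h^m\,\Curl \vc{\eta}_h\ dx=\int_\Omega\vc{f}_h^m\,\Curl \vc{\eta}_h\ dx,\qquad \forall\,\vc{\eta}_h\in\vc{W}_h .
\]
Inserting $\vc{u}_h^m=\Curl \vc{\zeta}_h^m+\vc{z}_h^m$ into the second line of \eqref{FEM:momentumeq} and recalling that $\vc{z}_h^m\in\vc{V}_h^{0,\perp}$ is orthogonal to $\Curl \vc{W}_h$, one obtains
\[
\int_\Omega\vc{w}_h^m\,\vc{\eta}_h\ dx=\int_\Omega\Curl \vc{\zeta}_h^m\,\Curl \vc{\eta}_h\ dx,\qquad \forall\,\vc{\eta}_h\in\vc{W}_h .
\]
From Lemma \ref{lemma:stability}, the bound \eqref{eq:curlcomponent}, and the discrete Poincar\'e inequality \eqref{Poincare2} we have $\vc{\zeta}_h\inb L^2(0,T;\vc{W}^{\Curl,2}_{0}(\Om))$, so along a subsequence $\vc{\zeta}_h\weak\vc{\zeta}$ in that space, while $\vc{w}_h\weak\vc{w}$ by \eqref{eq:basic-conv}.

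Next I would pass to the limit in these two identities. Testing with $\vc{\eta}_h=\Pi_h^W\vc{\eta}$ for $\vc{\eta}$ in a dense class of smooth fields of $\vc{W}^{\Curl,2}_{0}(\Om)$, against a piecewise-constant-in-time sampling of an arbitrary $\psi\in C_c^\infty(0,T)$, and using the commuting diagram property $\Curl \Pi_h^W\vc{\eta}=\Pi_h^V\Curl \vc{\eta}$ together with Lemma \ref{lemma:interpolation} (so $\Pi_h^W\vc{\eta}\to\vc{\eta}$ and $\Curl \Pi_h^W\vc{\eta}\to\Curl \vc{\eta}$ in $\vc{L}^2(\Om)$) and $\vc{f}_h\to\vc{f}$ in $\vc{L}^2(\Dom)$, a density argument produces, for a.e.\ $t$ and every $\vc{\eta}\in\vc{W}^{\Curl,2}_{0}(\Om)$,
\[
\mu\int_\Omega\Curl \vc{w}(t)\,\Curl \vc{\eta}\ dx=\int_\Omega\vc{f}(t)\,\Curl \vc{\eta}\ dx,\qquad \int_\Omega\vc{w}(t)\,\vc{\eta}\ dx=\int_\Omega\Curl \vc{\zeta}(t)\,\Curl \vc{\eta}\ dx .
\]
Since $\vc{w}(t),\vc{\zeta}(t)\in\vc{W}^{\Curl,2}_{0}(\Om)$ for a.e.\ $t$, these identities may in particular be used with $\vc{\eta}=\vc{w}(t)$ and with $\vc{\eta}=\vc{\zeta}(t)$.

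The strong convergences then follow in a three-step cascade, each step pairing a discrete energy identity with the limit identities via a strong--weak product. (i) Testing the first line of \eqref{FEM:momentumeq} with $\vc{v}_h=\Curl \vc{w}_h^m$ gives $\mu\iint\abs{\Curl \vc{w}_h}^2\ dxdt=\iint\vc{f}_h\,\Curl \vc{w}_h\ dxdt\to\iint\vc{f}\,\Curl \vc{w}\ dxdt=\mu\iint\abs{\Curl \vc{w}}^2\ dxdt$, so $\Curl \vc{w}_h\to\Curl \vc{w}$ in $\vc{L}^2(\Dom)$. (ii) Testing the second decoupled identity with $\vc{\eta}_h=\vc{w}_h^m$ gives $\iint\abs{\vc{w}_h}^2\ dxdt=\iint\Curl \vc{\zeta}_h\,\Curl \vc{w}_h\ dxdt\to\iint\Curl \vc{\zeta}\,\Curl \vc{w}\ dxdt=\iint\abs{\vc{w}}^2\ dxdt$ by (i), so $\vc{w}_h\to\vc{w}$ in $\vc{L}^2(\Dom)$. (iii) Testing the second decoupled identity with $\vc{\eta}_h=\vc{\zeta}_h^m$ gives $\iint\abs{\Curl \vc{\zeta}_h}^2\ dxdt=\iint\vc{w}_h\,\vc{\zeta}_h\ dxdt\to\iint\vc{w}\,\vc{\zeta}\ dxdt=\iint\abs{\Curl \vc{\zeta}}^2\ dxdt$ by (ii), so $\Curl \vc{\zeta}_h\to\Curl \vc{\zeta}$ in $\vc{L}^2(\Dom)$. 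Steps (i)--(ii) give the first convergence in \eqref{eq:wh-curlwh-strongconv} and step (iii) the second. The step I expect to be most delicate is the passage to the limit in the decoupled equations: one must exploit the commuting diagram property of the N\'ed\'elec spaces to approximate test functions in $\vc{W}^{\Curl,2}_{0}(\Om)$, and one must check that the weak limits $\vc{w}$ and $\vc{\zeta}$ really belong to $\vc{W}^{\Curl,2}_{0}(\Om)$ (so that they are admissible in the limit identities), which uses that $\vc{W}_h$ has vanishing tangential degrees of freedom on $\partial\Om$.
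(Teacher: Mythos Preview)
Your proof is correct and follows essentially the same route as the paper: decouple $(\vc{w}_h,\vc{\zeta}_h)$ from the density via the discrete Hodge decomposition, pass to the limit in the resulting linear identities using interpolated test functions, and upgrade weak to strong convergence by the ``weak convergence $+$ convergence of norms'' trick. The only organizational difference is that you insert an extra step (i) proving $\Curl\vc{w}_h\to\Curl\vc{w}$ strongly and then cascade through (ii)--(iii), whereas the paper gets $\norm{\vc{w}_h}_{L^2}^2\to\norm{\vc{w}}_{L^2}^2$ directly from the identity $\mu\iint|\vc{w}_h|^2=\iint\vc{f}_h\,\Curl\vc{\zeta}_h$ by pairing the strong convergence of $\vc{f}_h$ with the weak convergence of $\Curl\vc{\zeta}_h$, bypassing your step (i); both arguments are valid and yours yields a little more.
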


\begin{proof} 
Subtract the first equation of \eqref{FEM:momentumeq} 
with $\vc{v}_h = \Curl \vc{\xi}_h^m$ from $\mu$ times the 
second equation of \eqref{FEM:momentumeq}. 
Multiplying the result with $\Delta t$ and summing 
over all $m=1,\ldots,M$ yields 
\begin{equation}\label{eq:curlconv1}
	\begin{split}
		&\int_0^T \int_\Om \mu \Curl \vc{\eta}_h \Curl \vc{\zeta}_h 
		- \mu \Curl \vc{w}_h \Curl \vc{\xi}_h \ dxdt \\
		&\qquad = \int_0^T\int_\Om \mu \vc{w}_h\vc{\eta}_h
		- \vc{f}_h \Curl \vc{\xi}_h \ dxdt,
	\end{split}
\end{equation}
for all $\vc{\eta}_h, \vc{\xi}_h$ that are piecewise constant 
in time with values in $\vc{W}_h(\Om)$. 
Fixing $\vc{\eta},\vc{\xi} \in C_c^\infty((0,T)\times \Om)$, we use 
in \eqref{eq:curlconv1} the test functions
\begin{align*}
	& \vc{\xi}_h(t,\cdot) = \vc{\xi}_h^m(\cdot) 
	:=\frac{1}{\Delta t}\int_{t^{m-1}}^{t^m} 
	\Pi_h^W \vc{\xi}(\cdot, s) \ ds,
	\quad \text{$t\in (t_{m-1},t_m)$, $m=1,\ldots,M$.} 
	\\ & \vc{\zeta}_h(t,\cdot) =\vc{\zeta}_h^m(\cdot) 
	:=\frac{1}{\Delta t}\int_{t^{m-1}}^{t^m} 
	\Pi_h^W \vc{\zeta}(\cdot, s) \ ds,
	\quad \text{$t\in (t_{m-1},t_m)$, $m=1,\ldots,M$.}
\end{align*}
Due to Lemma \ref{lemma:interpolation}, 
$\Curl \vc{\xi}_h \to \Curl \vc{\xi}$ and 
$\Curl \vc{\eta}_h \to \Curl \vc{\eta}$ in $L^2(0,T;\vc{L}^2(\Omega))$. 
As a consequence, keeping in mind \eqref{eq:basic-conv}, 
we let $h\to 0$ in \eqref{eq:curlconv1} to obtain
\begin{equation}\label{eq:curlconv2}
	\begin{split}
		&\int_0^T \int_\Om \mu \Curl \vc{\eta} \Curl \vc{\zeta}
		-\mu \Curl \vc{w} \Curl \vc{\xi} \ dxdt \\
		&\qquad = \int_0^T\int_\Om \mu \vc{w}\vc{\eta}
		-\vc{f}\Curl \vc{\zeta} \ dxdt,
		\quad \forall \vc{\eta},\vc{\xi} \in \vc{C}_c^\infty((0,T)\times \Om).
	\end{split}
\end{equation}

%Density is Theorem 2.6 in the first (or second) edition.
Since $\vc{C}_c^\infty((0,T)\times \Om)$
is dense in $L^2(0,T;\vc{W}^{\Curl, 2}_0(\Om))$ (\cite{Girault:1986fu}), we 
conclude that \eqref{eq:curlconv2} holds for all 
$\vc{\eta},\vc{\xi} \in L^2(0,T;\vc{W}^{\Curl, 2}_0(\Om))$.
Hence, taking $\vc{\eta} = \vc{w}$, $\vc{\xi}=\vc{\zeta}$ 
in \eqref{eq:curlconv2},
\begin{equation}\label{eq:whatwewant}
	0 = \int_0^T \int_\Om \mu \abs{\vc{w}}^2 
	- \vc{f}\Curl \vc{\xi} \ dxdt.
\end{equation}

Next, setting $\vc{\eta}_h = \vc{w}_h$ and 
$\vc{\xi}_h = \vc{\zeta}_h$ 
in \eqref{eq:curlconv1}, we observe that
\begin{equation*}%\label{eq:disc-whatwewant}
	0 = \int_0^T\int_\Om \mu \abs{\vc{w}_h}^2
	-\vc{f}_h\Curl \vc{\xi}_h \ dxdt.
\end{equation*}
Letting $h\to 0$ and comparing the result 
with \eqref{eq:whatwewant} reveals that
$$
\lim_{h \to 0}\int_0^T\int_\Om 
\mu\abs{\vc{w}_h}^2 \ dxdt 
=\int_0^T\int_\Om \mu\abs{\vc{w}}^2 \ dxdt,
$$
which implies the first part of \eqref{eq:wh-curlwh-strongconv}:
\begin{equation}\label{eq:wstrong}
	\vc{w}_h \to \vc{w} \quad 
	\text{in $L^2(0,T;\vc{L}^2(\Omega))$.}
\end{equation}

To prove the second part of \eqref{eq:wh-curlwh-strongconv}, we 
make use of $\vc{\eta}_{h}=\vc{\zeta}_{h}^m$ 
as a test function in the second equation 
of \eqref{FEM:momentumeq}, sum the result over $m=1,\ldots, M$, 
and subsequently send $h$ to zero:
\begin{align*}
	\lim_{h \to 0}
	\int_{0}^T\int_{\Omega}
	\abs{\Curl \vc{\zeta}_{h}}^2\ dxdt
	& = \lim_{h \to 0} \int_{0}^T \int_{\Omega}\vc{w}_{h}\vc{\zeta}_{h}\ dxdt 
	\\ & \overset{\eqref{eq:wstrong}}{=} 
	\int_{0}^T\int_{\Omega}\vc{w}\vc{\zeta}\ dxdt
	= \int_{0}^T \int_{\Omega}|\Curl \vc{\zeta}|^2\ dxdt,
\end{align*}
where the last equality follows by arguing along the 
lines leading up to \eqref{eq:whatwewant}.
\end{proof}

During the proof of Lemma \ref{lemma:convergenceofrhou} we made use of a 
spatial compactness property stated in the next lemma.

\begin{lemma}\label{lemma:spacetranslation2} 
Given \eqref{eq:basic-conv}, define $\Set{(\vc{\zeta}_{h},\vc{z}_{h})}_{h>0}$ in terms
of the decomposition $\vc{u}_{h}(\cdot,t)=\Curl \vc{\zeta}_{h}(t,\cdot)
+\vc{z}_{h}(\cdot,t)$ with $\vc{\zeta}_{h}(\cdot,t) \in \vc{W}_{h}^{0,\perp}$, 
$\vc{z}_{h}(t,\cdot) \in \vc{V}_{h}^{0,\perp}$, for $t \in (0,T)$. 
Then, for any $\xi\in\mathbb{R}^N$,
$$
\norm{\vc{z}_{h}(t,\cdot)-\vc{z}_{h}(t,\cdot-\xi)}_{L^2(0,T;\vc{L}^2(\Omega_\xi)}^2 
\leq C\left( |\xi|^\frac{4-N}{2} + |\xi|^2\right)
\norm{\Div \vc{z}_{h}}_{L^2(0,T;L^2(\Omega))}^2,
$$
where $\Om_\xi = \Set{x \in \Om: \operatorname{dist}(x, \partial \Om)> \xi}$.
\end{lemma}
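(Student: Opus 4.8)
The plan is to establish the inequality pointwise in $t\in(0,T)$ and then integrate over $t$; fix such a $t$ and write $\vc{z}_h:=\vc{z}_h(t,\cdot)\in\vc{V}_h^{0,\perp}$. Three structural facts drive the argument. First, $\vc{z}_h$ is div--conforming, so across every interior face $\Gamma$ the jump $\jump{\vc{z}_h}_\Gamma$ is \emph{tangential}. Second, since by exactness of the discrete De~Rham sequence the divergence free elements of $\vc{V}_h$ are exactly $\Curl\vc{W}_h$, membership in $\vc{V}_h^{0,\perp}=(\Curl\vc{W}_h)^\perp\cap\vc{V}_h$ gives the discrete curl--freeness $\int_\Om\vc{z}_h\cdot\Curl\vc{\eta}_h\,dx=0$ for all $\vc{\eta}_h\in\vc{W}_h$. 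Third, the discrete Poincar\'e inequality \eqref{Poincare1} lets us replace $\norm{\vc{z}_h}_{\vc{L}^2(\Om)}$ by $\norm{\Div\vc{z}_h}_{L^2(\Om)}$ at the end. As a preliminary I extend $\vc{z}_h$ by zero to $\R^N$; since $\vc{z}_h\cdot\nu$ vanishes on $\pOm$, the extension has square--integrable divergence $(\Div\vc{z}_h)\vc{1}_\Om$ and all its jumps remain interior and tangential.

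The second step is to decompose the increment along the translation segment. On $\Set{x-(1-s)\xi:s\in[0,1]}$ the lowest order N\'ed\'elec/Raviart--Thomas field is affine away from the finitely many crossing points $y_1,\dots,y_k$ where the segment meets a face, whence
\[
\vc{z}_h(x)-\vc{z}_h(x-\xi)=\Bigl(\int_0^1(D\vc{z}_h)(x-(1-s)\xi)\,ds\Bigr)\xi+\sum_{j=1}^{k}\jump{\vc{z}_h}_{\Gamma_j}(y_j).
\]
For the affine term one uses that on each element $D\vc{z}_h=\tfrac1N(\Div\vc{z}_h)\,\mathbf{I}$; Cauchy--Schwarz in $s$, Fubini and a measure preserving change of variables bound its $\vc{L}^2(\Om_\xi)$ norm by $C\abs{\xi}\norm{\Div\vc{z}_h}_{L^2(\Om)}$, which yields the $\abs{\xi}^2$ contribution. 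For the jump sum I would combine: the geometric estimate that $\Set{x\in\Om_\xi:\text{the segment meets }\Gamma}$ has measure $\lesssim\abs{\xi}\abs{\Gamma}$ while any one segment meets $\lesssim\abs{\xi}/h+1$ faces (used via Cauchy--Schwarz over the crossings); the scaled trace inequality of Lemma~\ref{lemma:edgebounds} and the inverse inequality of Lemma~\ref{lemma:inverse} to pass between face and element $L^2$ norms; and a global bound on the tangential jumps,
\[
\sum_{\Gamma\in\Gamma_h^I}h\int_\Gamma\abs{\jump{\vc{z}_h}_\Gamma}^2\,dS\ \lesssim\ h^{\beta}\norm{\Div\vc{z}_h}_{L^2(\Om)}^2,\qquad \beta=\beta(N)\ \text{large enough,}
\]
which fails for a generic member of $\vc{V}_h$ and is where the discrete curl--freeness is decisive. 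Treating the regimes $\abs{\xi}\le h$ and $\abs{\xi}>h$ separately (and using $h\le\abs{\xi}$ in the latter) then produces the $\abs{\xi}^{(4-N)/2}$ contribution, the $N$--dependence of the exponent arising from the $(N{-}1)$--dimensional face measure. Summing the two contributions, replacing $\norm{\vc{z}_h}$ by $\norm{\Div\vc{z}_h}$ via \eqref{Poincare1} where needed, and integrating in $t$ completes the proof.

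The main obstacle is the tangential jump bound above. The generic trace and inverse estimates only deliver $\sum_\Gamma h\int_\Gamma\abs{\jump{\vc{z}_h}_\Gamma}^2\,dS\lesssim\norm{\vc{z}_h}_{\vc{L}^2(\Om)}^2$, which carries no power of $h$ and is useless here — indeed the whole estimate is false on all of $\vc{V}_h$, as a divergence free, oscillating (hence tangentially jumping) field shows. One must therefore convert the orthogonality $\int_\Om\vc{z}_h\cdot\Curl\vc{\eta}_h\,dx=0$ — where for $\vc{\eta}_h\in\vc{W}_h$ the tangential trace vanishes, so \eqref{eq:integrationbyparts} carries no boundary term and the distributional curl of $\vc{z}_h$, concentrated as tangential jumps on the skeleton, is tested against all of $\vc{W}_h$ — into quantitative control of those jumps, by choosing a discrete test field built from the jumps themselves and invoking the commuting diagram property of the lowest order spaces. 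This is the only point where the precise structure of the N\'ed\'elec spaces, beyond div--conformity, is needed, and it is what ultimately makes the spatial compactness of $\vc{u}_h$ (Lemma~\ref{lemma:convergenceofrhou}) work.
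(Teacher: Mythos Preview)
Your reduction to a pointwise--in--$t$ spatial estimate followed by time integration is exactly what the paper does; the paper's proof of this lemma is in fact a two--line invocation of the appendix result (Theorem~\ref{theorem:spacetranslation}) at each fixed $t$. So the substance of your proposal is really an attempt to prove that appendix theorem, and here your approach differs from the paper's and carries a genuine gap.

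The paper does \emph{not} decompose along the segment $[x-\xi,x]$. Instead it projects $\vc{z}_h$ onto the Crouzeix--Raviart space via a face--averaging operator $\Pi_h^R$, uses Brenner's estimate to bound $\norm{\vc{z}_h-\Pi_h^R\vc{z}_h}_{\vc{L}^2}^2$ by $\sum_E h^2\norm{D\vc{z}_h}_{\vc{L}^2(E)}^2+\sum_\Gamma h^{2-N}\bigl|\int_\Gamma[\vc{z}_h]\,dS\bigr|^2$, and then applies Stummel's translation estimate to $\Pi_h^R\vc{z}_h$. The tangential jump term is controlled by Lemma~\ref{lemma:maxbound}, which in turn rests on a \emph{duality} argument (Lemma~\ref{lemma:curlcontrol}): one solves a mixed Hodge--Laplace problem with source $\Curl\vc{\phi}$ to manufacture a discrete $\vc{W}_h$ function whose curl approximates $\Curl\vc{\phi}$, and the orthogonality $\int_\Om\vc{z}_h\cdot\Curl\vc{\eta}_h\,dx=0$ then yields $\norm{\Curl\vc{z}_h}_{\vc{W}^{-1,2}}\lesssim h\norm{\Div\vc{z}_h}_{L^2}$. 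This is more than ``choosing a discrete test field built from the jumps''; the approximation rate of the mixed method is what produces the crucial power of $h$.

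The gap in your segment approach is the regime $|\xi|>h$. After Cauchy--Schwarz over the $\sim|\xi|/h$ crossings and the prism/Fubini step you outline, the jump contribution is bounded by
\[
\Bigl(\tfrac{|\xi|}{h}+1\Bigr)\,|\xi|\sum_{\Gamma\in\Gamma_h^I}\int_\Gamma\bigl|\jump{\vc{z}_h}_\Gamma\bigr|^2\,dS
\ \lesssim\ \Bigl(\tfrac{|\xi|}{h}+1\Bigr)\,|\xi|\,h^{\beta-1}\,\norm{\Div\vc{z}_h}_{L^2}^2,
\]
and the best $\beta$ obtainable from the discrete curl--freeness (via Lemma~\ref{lemma:maxbound} plus the affine structure of RT$_0$ on faces) is $\beta=(4-N)/2$. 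For $|\xi|>h$ this gives $|\xi|^2 h^{\beta-2}$, which for $N=3$ is $|\xi|^2 h^{-3/2}$ and does \emph{not} reduce to $|\xi|^{1/2}$ or $|\xi|^2$ since $h^{-3/2}$ cannot be bounded using $h<|\xi|$. The Cauchy--Schwarz over crossings is simply too lossy. The paper's CR--projection strategy sidesteps this entirely: the jump error $\norm{\vc{z}_h-\Pi_h^R\vc{z}_h}$ is estimated \emph{once}, with a bound depending only on $h$, and Stummel's estimate for the CR field carries the $|\xi|$--dependence; a final projection onto a mesh of width $\sim|\xi|$ then trades $h$ for $|\xi|$ in the residual $h^{(4-N)/2}$ term.
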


\begin{proof}
For each $t \in (0,T)$ we know that $\vc{z}_{h}(t,\cdot) 
\in \vc{V}_{h}^{0,\perp}(\Omega)$, so Theorem \ref{theorem:spacetranslation} 
can be applied to give
$$
\norm{\vc{z}_{h} (t,\cdot) - \vc{z}_{h}(t,\cdot - \xi)}_{\vc{L}^2(\Omega_\xi)}^2 
\leq C\left( |\xi|^\frac{4-N}{2} + |\xi|^2\right)
\norm{\Div \vc{z}_{h}}_{L^2(\Omega)}^2,
$$
where $C>0$ is independent of $h,\xi,t$. We conclude by integrating over $(0,T)$.
\end{proof}

%*** Consistency of the continuity approximation *** 

\begin{lemma}[Continuity equation]\label{lemma:densityconv}
The limit pair $(\vrho,\vc{u})$ constructed in \eqref{eq:basic-conv} 
is a weak solution of the continuity equation \eqref{eq:contequation} in 
the sense of  Definition \ref{def:weak}.
\end{lemma}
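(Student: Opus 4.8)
The plan is to obtain the weak identity \eqref{eq:weak-rho} directly from the discontinuous Galerkin scheme \eqref{FEM:contequation}: test it against a discretization of an arbitrary smooth test function, perform a summation by parts in time, and let $h\to0$ using \eqref{eq:basic-conv} together with Lemmas \ref{lemma:convergenceofrhou}, \ref{lemma:productionbound}, and \ref{lemma:timecont}. Since essentially all of the genuinely analytic work (the spatial translation estimate underlying $\vrho_h\vc{u}_h\weak\vrho\vc{u}$ and the production bound controlling the upwind artificial diffusion) has already been carried out in the preceding lemmas, the proof amounts to a careful assembly of these ingredients.

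First I would fix $\phi\in C^\infty([0,T)\times\cOm)$; as $\phi$ vanishes in a neighbourhood of $t=T$, the time averages $\phi^m$ and the projections $\phi_h^m=\Pi_h^Q\phi^m$ (cf.~\eqref{eq:phim-phimh}) vanish for all $m$ near $M$ once $h$ is small. Inserting $\phi_h^m$ into \eqref{FEM:contequation} and summing over $m=1,\dots,M$, a summation by parts in the time-difference term yields
\begin{equation*}
	-\int_\Om\vrho_h^0\phi_h^1\ dx-\sum_{m=1}^{M-1}\int_\Om\vrho_h^m\bigl(\phi_h^{m+1}-\phi_h^m\bigr)\ dx
	=\Dt\sum_{m=1}^{M}\sum_{\Gamma\in\Gamma_h^I}\int_\Gamma\bigl(\vrho_-^m(\vc{u}_h^m\cdot\nu)^+ +\vrho_+^m(\vc{u}_h^m\cdot\nu)^-\bigr)\jump{\phi_h^m}_\Gamma\ dS(x).
\end{equation*}
On the right-hand side I would use the algebraic identity already established in the proof of Lemma \ref{lemma:timecont}, which rewrites each face sum as $\int_\Om\vrho_h^m\vc{u}_h^m\cdot D\phi^m\ dx$ plus the upwind production term $\sum_{E\in E_h}\int_{\binner}\jump{\vrho_h^m}_{\partial E}(\vc{u}_h^m\cdot\nu)^-(\phi_h^m-\phi^m)\ dS(x)$.

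It then remains to pass to the limit in each term. For the initial term, $\vrho_h^0\to\vrho_0$ in $L^1(\Om)$ and $\phi_h^1\to\phi(0,\cdot)$ uniformly, so $\int_\Om\vrho_h^0\phi_h^1\ dx\to\int_\Om\vrho_0\,\phi|_{t=0}\ dx$. The discrete time-derivative sum converges to $\int_0^T\int_\Om\vrho\,\phi_t\ dxdt$, since $\vrho_h\weak\vrho$ in $L^{2\gamma}(\Dom)$ while $(\phi_h^{m+1}-\phi_h^m)/\Dt$, extended piecewise constantly in time, converges to $\phi_t$ strongly in every $L^p(\Dom)$ (a weak--strong pairing). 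The convective sum $\Dt\sum_m\int_\Om\vrho_h^m\vc{u}_h^m\cdot D\phi^m\ dx$ converges to $\int_0^T\int_\Om\vrho\vc{u}\cdot D\phi\ dxdt$ by Lemma \ref{lemma:convergenceofrhou}, together with $D\phi^m\to D\phi$ uniformly. Finally, the upwind production term is, by Lemma \ref{lemma:productionbound}, bounded in absolute value by $C\norm{D\phi}_{L^\infty(0,T;\vc{L}^\infty(\Om))}\,h^{1/2}$ and hence vanishes as $h\to0$. Collecting these limits produces exactly \eqref{eq:weak-rho}, and with the regularity recorded in \eqref{eq:basic-conv} this identifies $(\vrho,\vc{u})$ as a weak solution of the continuity equation in the sense of Definition \ref{def:weak}.

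The one point requiring care --- and the main obstacle --- is the passage to the limit in the convective term: Lemma \ref{lemma:convergenceofrhou} only furnishes convergence of $\vrho_h\vc{u}_h$ in the sense of distributions on the \emph{open} cylinder $\Dom$, whereas $D\phi$ need not be compactly supported in $x$. This is resolved by noting that $\vrho_h\vc{u}_h$ is in fact bounded in $L^q(\Dom)$ for some $q>1$ --- by Hölder's inequality from $\vrho_h\inb L^{2\gamma}(\Dom)$ and $\vc{u}_h\inb L^2(0,T;\vc{W}^{\Div,2}_0(\Om))\subset L^2(0,T;\vc{L}^2(\Om))$, giving $q=\tfrac{2\gamma}{\gamma+1}>1$ --- so that the distributional convergence upgrades to weak convergence in $L^q(\Dom)$, which legitimizes the pairing against $D\phi$; the $O(h)$ discrepancy (in $L^\infty$) between the time average $\phi^m$ and $\phi$ itself is harmless.
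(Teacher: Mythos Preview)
Your proposal is correct and follows essentially the same route as the paper: test \eqref{FEM:contequation} with $\phi_h^m$, sum over $m$, rewrite the face sum via the identity from the proof of Lemma~\ref{lemma:timecont} as a bulk convective term plus an upwind production error, kill the latter with Lemma~\ref{lemma:productionbound}, pass to the limit in the convective term via Lemma~\ref{lemma:convergenceofrhou}, and handle the time difference by summation by parts. Your final paragraph on upgrading the distributional convergence of $\vrho_h\vc{u}_h$ to weak $L^{2\gamma/(\gamma+1)}$ convergence so as to pair against $D\phi$ (which is not compactly supported in $x$) is a point the paper leaves implicit; your justification is correct and a welcome addition.
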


\begin{proof}
Fix a test function $\phi \in C_{c}^\infty([0,T)\times\cOm)$, and 
introduce the piecewise constant approximations 
$\phi_{h}:=\Pi_{h}^Q \phi$, $\phi_{h}^m:= \Pi_{h}^Q \phi^m$, and
$\phi^m:=\frac{1}{\Delta t}\int_{t^{m-1}}^{t^m} \phi(t,\cdot)\ dt$. 

Let us employ $\phi^m_{h}$ as test function in 
the continuity scheme \eqref{FEM:contequation} 
and sum over $m=1,\ldots,M$.  The resulting equation reads
\begin{align*}
	&\sum_{m=1}^M \Delta t\int_{\Omega} \frac{d}{dt}
	\left(\Pi_{\mathcal{L}}\vrho_{h}\right)\phi_{h}^m\ dxdt \\
	& \quad = \sum_{\Gamma \in \Gamma^I_{h}}\sum_{m=1}^M \Delta t
	\int_\Gamma \left(\vrho^m_-(\vc{u}^{m}_h \cdot \nu)^+ 
	+ \vrho^m_+(\vc{u}^{m}_h \cdot \nu)^-\right)
	\jump{\phi^m_{h}}_\Gamma\ dS(x).
\end{align*}
As in the proof of Lemma \ref{lemma:timecont} we can rewrite this as
\begin{equation}\label{eq:contconvs}
	\begin{split}
		&\sum_{m=1}^M \Delta t\int_{\Omega} \frac{d}{dt}
		\left(\Pi_{\mathcal{L}}\vrho_{h}\right)\phi_{h}^m\ dxdt \\
		& = \sum_{m=1}^M \Delta t\int_{\Omega}
		\vrho^m_{h}\vc{u}^m_{h} D\phi^m\ dx
		\\ & \qquad\quad
		+\sum_{E \in E_{h}}\sum_{m=1}^M \Delta t\int_{\binner}
		\jump{\vrho^m_{h}}_{\partial E}(\vc{u}^m_{h} \cdot \nu)^-
		(\phi^m_{h} - \phi^m)\ dS(x)\\
		& = \int_{0}^T \int_{\Omega}\vrho_{h}\vc{u}_{h}D\phi\ dxdt 
		\\ & \qquad\quad
		+ \sum_{E \in E_{h}}\int_{0}^T \int_{\binner}
		\jump{\vrho_{h}}_{\partial E}(\vc{u}_{h}\cdot \nu)^-
		(\phi_{h} - \phi)\ dS(x)dt.
	\end{split}
\end{equation}

Lemma \ref{lemma:productionbound} tells us that
\begin{equation*}%\label{eq:contconv0}
	\abs{\sum_{E \in E_{h}}\int_{0}^T \int_{\binner}
	\jump{\vrho_{h}}_{\partial E}(\vc{u}_{h}\cdot \nu)^-
	(\phi_{h}-\phi)\ dS(x)dt} 
	\leq C\, h^\frac{1}{2}
	\norm{D\phi}_{L^\infty(0,T;\vc{L}^\infty(\Omega))}.
\end{equation*}

In view of Lemma \ref{lemma:convergenceofrhou},
$$
\lim_{h \rightarrow 0 }\int_{0}^T \int_{\Omega}\vrho_{h}\vc{u}_{h} D\phi\ dxdt 
= \int_{0}^T \int_{\Omega}\vrho \vc{u} D\phi\ dxdt.
$$

Summation by parts gives
\begin{equation*}%\label{eq:contconv1}
	\begin{split}
		& \sum_{m=1}^M \Delta t\int_{\Omega} \frac{d}{dt}
		\left(\Pi_{\mathcal{L}}\vrho_{h}\right)\phi_{h}^m\ dxdt
		\\ & \quad 
		= -\int_{\Delta t}^T \int_{\Omega} \vrho_{h}(t-\Delta t,x)
		\frac{\partial}{\partial t} \left(\Pi_{\mathcal{L}} \phi_{h}\right)\ dxdt 
		- \int_{\Omega}\vrho_{h}^0\phi_{h}^1\ dx
		\\ & \quad 
		\overset{h\to 0}{\to}
		-\int_{0}^T\int_{\Omega}\vrho \phi_{t}\ dxdt 
		- \int_{\Omega}\vrho_{0}\phi(0,x)\ dx.
	\end{split}
\end{equation*}
where \eqref{eq:basic-conv}, together with the strong convergence 
 $\vrho^0_h \overset{h \to 0}{\to} \vrho_0$, was used to pass to the limit. 
Summarizing, letting $h \to0$ in \eqref{eq:contconvs} delivers 
the desired result \eqref{eq:weak-rho}
\end{proof}

\subsection{Strong convergence of density approximations}\label{sec:strong-conv-of-vrho}
The instrument used to establish the strong 
convergence of the density approximations $\vrho_h$ is 
a weak continuity property of the quantity $\eff(\vrho_{h},\vc{u}_{h})$ 
defined in \eqref{eq:visc-flux}. To derive this property 
we exploit our choice of numerical method 
and the boundary conditions; specifically, the finite 
element spaces, which are chosen 
such that \eqref{eq:theholepoint} below holds.

\begin{lemma}[Discrete effective viscous flux]\label{lemma:effectiveflux} 
Given the convergences in \eqref{eq:basic-conv},
$$
\lim_{h \to 0}\int_{0}^t\int_{\Omega}
\eff(\vrho_{h},\vc{u}_{h})\, \vrho_{h}\ dxds 
=\int_{0}^t\int_{\Omega} \overline{\eff(\vrho,\vc{u})}\,\vrho\ dxds, 
\quad \forall t\in (0,T).
$$
\end{lemma}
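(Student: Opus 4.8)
The plan is to mimic Lions' "effective viscous flux" argument in the discrete setting, exploiting the discrete Hodge decomposition \eqref{eq:Vh-decomp} and the fact that the vorticity $\vc{w}_h$ has been decoupled from the density. The key identity I want to land on is a discrete analogue of Lions' commutator relation: testing the momentum scheme \eqref{FEM:momentumeq} against a cleverly chosen $\vc{v}_h$ should produce $\int \eff(\vrho_h,\vc{u}_h)\,\vrho_h$ up to terms involving $\Curl \vc{w}_h$ and $\vc{f}_h$, all of which converge by the strong convergence of $\vc{w}_h$ and $\Curl \vc{\zeta}_h$ established in Lemma \ref{lemma:compactcurlpart}. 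Concretely, for each fixed time level $m$, solve the discrete div problem (using the surjectivity of $\Div$ from $\vc{V}_h^{0,\perp}$ onto mean-zero piecewise constants, cf.\ the remarks after \eqref{Poincare2}) to obtain $\vc{v}_h^m \in \vc{V}_h^{0,\perp}$ with $\Div \vc{v}_h^m = \vrho_h^m - \frac{1}{|\Om|}\int_\Om \vrho_h^m\,dx$. Insert $\vc{v}_h^m$ as test function in the first equation of \eqref{FEM:momentumeq}, multiply by $\Delta t$, sum over $m$, and integrate in time. Because $\vc{v}_h^m$ is weakly $\Div$-free... rather, because it lives in $\vc{V}_h^{0,\perp}$ and the Hodge decomposition is orthogonal, the term $\int \mu \Curl \vc{w}_h^m\, \vc{v}_h^m$ survives and must be handled — this is where the structure matters.

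The main steps, in order, are: (i) set up the discrete test function $\vc{v}_h^m$ and the corresponding $\vc{v}_h$ on $(0,T)\times\Om$, recording that $\vc{v}_h$ is bounded in $L^2(0,T;\vc{W}^{\Div,2}_0)$ by Lemma \ref{lemma:higherorderpressure} and the discrete Poincar\'e inequality \eqref{Poincare1}; (ii) derive the identity
\begin{equation*}
	\int_0^T\!\!\int_\Om \eff(\vrho_h,\vc{u}_h)\Big(\vrho_h - \tfrac{1}{|\Om|}\!\int_\Om \vrho_h\Big)dxdt
	= \int_0^T\!\!\int_\Om \big(\vc{f}_h - \mu\Curl\vc{w}_h\big)\cdot\vc{v}_h\,dxdt;
\end{equation*}
(iii) pass to the limit on the right-hand side, using $\vc{f}_h \to \vc{f}$ strongly in $\vc{L}^2$, $\mu\Curl\vc{w}_h\to\mu\Curl\vc{w}$ strongly in $\vc{L}^2(\Dom)$ (this needs the strong convergence of $\vc{w}_h$ from \eqref{eq:wstrong} together with \eqref{eq:stabilityeq3}-type control, or more directly the renormalized curl equation), and $\vc{v}_h \weak \vc{v}$ weakly, where $\Div\vc{v} = \vrho - \frac{1}{|\Om|}\int\vrho$ and $\Curl\vc{v}=0$; (iv) carry out the mirror computation in the limit, i.e.\ test the limiting mixed formulation \eqref{def:mixed-weak} (already known to hold for $(\vrho,\vc{w},\vc{u})$ via Lemma \ref{lemma:compactcurlpart} and the weak limits) against $\vc{v}$ to get $\int\int \overline{\eff}\,(\vrho - \frac{1}{|\Om|}\int\vrho) = \int\int (\vc{f}-\mu\Curl\vc{w})\cdot\vc{v}$; (v) subtract, and absorb the constant-in-space correction terms $\frac{1}{|\Om|}\int_\Om\vrho_h$ using the fact that $\int_\Om\eff(\vrho_h,\vc{u}_h)\,dx = \int_\Om p(\vrho_h)\,dx$ (boundary conditions) converges — this handles the discrepancy between $\vrho_h$ and its mean-subtracted version. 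Finally localize in time by inserting $\mathbf{1}_{(0,t)}$ in the time integral (legitimate since all quantities are bounded in suitable $L^p(L^p)$, $p>1$, so the argument goes through with a time cutoff or simply by the $C([0,T];L^p_{\mathrm{weak}})$ convergence of $\vrho_h$).

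The main obstacle I anticipate is step (iii), the strong convergence of $\Curl\vc{w}_h$: the energy estimate \eqref{eq:stabilityeq3} only gives an $h$-uniform $L^2(\Dom)$ \emph{bound}, not strong convergence, and $\vc{w}_h$ is merely $\Curl$-conforming so one cannot naively extract spatial compactness. However, the equation $\mu\int\Curl\vc{w}_h\,\vc{v}_h = \int\vc{f}_h\,\vc{v}_h$ holds for $\vc{v}_h = \Curl\vc{w}_h$ itself, and combined with the strong convergence $\vc{w}_h\to\vc{w}$ from Lemma \ref{lemma:compactcurlpart} one deduces $\|\Curl\vc{w}_h\|_{L^2(\Dom)}\to\|\Curl\vc{w}\|_{L^2(\Dom)}$ by passing to the limit in $\mu\int|\Curl\vc{w}_h|^2 = \int\vc{f}_h\Curl\vc{w}_h$ — so strong convergence does follow, but only after this extra argument, and one must be careful that the limiting curl equation is available. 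A secondary, more bookkeeping-type difficulty is that $\vc{v}_h \weak \vc{v}$ only weakly in $L^2(\Dom)$ while $\eff(\vrho_h,\vc{u}_h)\weak\overline{\eff}$ only weakly as well, so the product $\eff(\vrho_h,\vc{u}_h)\cdot\vrho_h$ on the left must be recognized as $\eff(\vrho_h,\vc{u}_h)\,\Div\vc{v}_h$ and handled through the identity, never as a weak-times-weak product — the whole point of the computation is that the right-hand side is a weak-times-\emph{strong} pairing ($\vc{f}_h-\mu\Curl\vc{w}_h$ strong, $\vc{v}_h$ weak), which is the only place compactness enters.
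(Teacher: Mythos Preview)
Your approach is correct but more roundabout than the paper's, and you miss the one structural simplification the discrete Hodge decomposition is designed to deliver. The paper takes $\vc{v}_h^m \in \vc{V}_h^{0,\perp}$ with $\Div\vc{v}_h^m$ equal to $\vrho_h^m - \Pi_h^Q\vrho$ (modulo a small constant fixing the mean), i.e.\ it subtracts a projection of the \emph{limit} density already at the discrete level; then $\vc{v}_h \weak 0$ in $L^2$, the right-hand side $-\int\vc{f}_h\cdot\vc{v}_h$ tends to zero, and $\lim_{h\to0}\int_0^t\int_\Om\eff(\vrho_h,\vc{u}_h)(\vrho_h-\vrho)\,dxds=0$ drops out in one line with no mirror computation at the continuum level. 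Your choice $\Div\vc{v}_h^m = \vrho_h^m - \tfrac{1}{|\Om|}\int_\Om\vrho_h^m\,dx$ also works but forces the extra steps (iv)--(v). More to the point, the term $\int_\Om\mu\Curl\vc{w}_h^m\cdot\vc{v}_h^m\,dx$ that you carry in step (ii) and flag as the ``main obstacle'' is \emph{identically zero}: $\vc{v}_h^m \in \vc{V}_h^{0,\perp}$ is by construction $L^2$-orthogonal to $\Curl\vc{W}_h\subset\{\vc{v}_h\in\vc{V}_h:\Div\vc{v}_h=0\}$, cf.~\eqref{eq:Vh-decomp}. That orthogonality is precisely why one solves the div-problem in $\vc{V}_h^{0,\perp}$ rather than in all of $\vc{V}_h$ --- it filters out the vorticity automatically and isolates $\eff$ without any compactness argument for $\Curl\vc{w}_h$. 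Your workaround (deducing $\|\Curl\vc{w}_h\|_{L^2}\to\|\Curl\vc{w}\|_{L^2}$ from $\mu\int|\Curl\vc{w}_h|^2=\int\vc{f}_h\cdot\Curl\vc{w}_h$ and its limit) is valid but unnecessary; the paper's proof is accordingly only a few lines.
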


\begin{proof}
For each $m=1,\ldots,M$, consider the problem
\begin{equation}\label{eq:vh-qh}
	\Div \vc{v}^m_{h} = q_h^m - \frac{1}{|\Om|}\int_\Om q_h^0 ~dx, \qquad
	q_h^m:=\vrho_{h}-
	\frac{1}{\Delta t}\int_{t^{m-1}}^{t^m}
	\Pi_{h}^Q \vrho\ dt,
\end{equation}
where $q_h^0 = \vrho^0_h - \vrho_0$.
Observe that $\int_{\Omega}q_{h}^m\ dx=0$. Indeed, using the 
continuity scheme \eqref{FEM:contequation} and 
the continuity equation satisfied by the 
limit $\vrho$, cf.~Lemma \ref{lemma:densityconv},
$$
\int_{\Om} \vrho_{h}\ dx = \int_{\Om} \vrho_h^0\ dx, \qquad
\int_{\Om}\Pi_{h}^Q \vrho\ dx
= \int_{\Om} \vrho\ dx 
= \int_\Om \vrho_0~dx
$$
Thus, there exists a unique solution 
$\vc{v}_{h}^m \in \vc{V}_{h}^{0, \perp}$ of \eqref{eq:vh-qh}. 
We denote by $\vc{v}_{h}(t,\cdot)$,  $q_h(t,\cdot)$ the 
usual ``piecewise constant" extensions of $\left\{v_h^m\right\}_{m=1}^M$, 
$\left\{q_h^m\right\}_{m=1}^M$ to $(0,T)$.

Utilizing $\vc{v}_{h}^m$ as test function, the 
velocity scheme \eqref{FEM:momentumeq} reads
\begin{equation}\label{eq:theholepoint}
	\int_{\Omega}\eff(\vrho_{h}^m, \vc{u}_{h}^m) q_h^m\ dx 
	= \frac{1}{|\Om|}\int_\Om q_h^0~dx\left(\int_\Om \eff(\vrho_h^m, \vc{u}_h^m)~dx\right) - \int_{\Omega}\vc{f}_{h}^m \vc{v}_{h}^m\ dx. 
\end{equation}
Multiplying by $\Delta t$, summing over $m$, and 
using the definition of $q_h^m$, we arrive at
\begin{equation*}
\begin{split}
\int_{0}^t\int_{\Omega}
\eff(\vrho_{h},\vc{u}_{h})
(\vrho_{h} - \vrho)\ dxds 
&= \frac{1}{|\Om|}\int_\Om q_h^0~dx\left(\int_0^t \int_\Om \eff(\vrho_h, \vc{u}_h)~dxdt\right) \\
&\qquad -\int_{0}^t\int_{\Omega}
\vc{f}_{h}\vc{v}_{h}\ dxds,
\end{split}
\end{equation*}
for any $t\in(0,T)$.

In view of Theorem \ref{theorem:spacetranslation}, we have that $\vc{v}_{h} \weak 0$ in $L^2(0,T;\vc{L}^2(\Omega))$. 
Since $\vc{f}_{h} \rightarrow \vc{f}$ in $L^2((0,T)\times \Omega)$ and $\int_\Om q_h^0~dx \rightarrow 0$,
we  conclude the desired result
$$
\lim_{h \rightarrow 0}\int_{0}^t\int_{\Omega}
\eff(\vrho_{h}, \vc{u}_{h})
(\vrho_{h} - \vrho)\ dxds = 0.
$$
\end{proof}

We are now in a position to infer the sought-after strong convergence 
of the density approximations.

\begin{lemma}[Strong convergence of $\vrho_h$]\label{lem:strong-conv}
Suppose that \eqref{eq:basic-conv} holds. Then, passing to 
a subsequence if necessary,
$$
\vrho_{h} \rightarrow \vrho
\quad \text{a.e.~in~$(0,T)\times \Omega$.}
$$

\end{lemma}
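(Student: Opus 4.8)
The plan is to follow the classical Lions--Feireisl strategy for obtaining strong convergence of the density from the weak continuity of the effective viscous flux, adapted to the discrete renormalized formulation already at our disposal. First I would fix a renormalizing function, the natural choice being $B(\vrho)=\vrho\log\vrho$ with $b(\vrho)=\vrho$, and record that the limit density $\vrho$, being square-integrable by \eqref{eq:basic-conv} and a weak solution of the continuity equation by Lemma \ref{lemma:densityconv}, is a renormalized solution by Lemma \ref{lemma:feireisl}. This gives, after integration over $\Om$,
\begin{equation*}
	\int_\Om \vrho(t)\log\vrho(t)\ dx + \int_0^t\int_\Om \vrho\,\Div\vc{u}\ dxds = \int_\Om \vrho_0\log\vrho_0\ dx.
\end{equation*}
On the discrete side I would take $\phi_h\equiv 1$ in the renormalized continuity scheme \eqref{FEM:renormalized} with the same $B$, sum over the time levels, and pass to the limit $h\to 0$; the artificial-diffusion terms (the $B''$ contributions) have a sign and may be dropped after taking a $\liminf$, so that
\begin{equation*}
	\limsup_{h\to 0}\left(\int_\Om \vrho_h(t)\log\vrho_h(t)\ dx + \int_0^t\int_\Om \vrho_h\,\Div\vc{u}_h\ dxds\right) \le \int_\Om \vrho_0\log\vrho_0\ dx,
\end{equation*}
where I must be a little careful that $\int_\Om\vrho_h^0\log\vrho_h^0\,dx \to \int_\Om\vrho_0\log\vrho_0\,dx$, which follows from the assumed a.e.\ and $L^1$ convergence of $\vrho_h^0$ together with a uniform $L^\gamma$ bound.

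Next I would convert the $\Div\vc{u}$ terms into pressure terms using the definition \eqref{eq:visc-flux} of the effective viscous flux: $\Div\vc{u}_h = \frac{1}{\mu+\lambda}\bigl(p(\vrho_h)-\eff(\vrho_h,\vc{u}_h)\bigr)$, and similarly at the limit with $\overline{\eff(\vrho,\vc{u})}$ and $\overline{p(\vrho)}=a\,\overline{\vrho^\gamma}$. Substituting, the decisive cross term $\int_0^t\int_\Om \eff(\vrho_h,\vc{u}_h)\,\vrho_h\,dxds$ appears, and here Lemma \ref{lemma:effectiveflux} is invoked: it converges to $\int_0^t\int_\Om \overline{\eff(\vrho,\vc{u})}\,\vrho\,dxds$. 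Combining the discrete inequality, the limit identity, and this weak-continuity statement, and using that $\overline{\eff(\vrho,\vc{u})} = \overline{p(\vrho)} - (\mu+\lambda)\Div\vc{u}$ (a linear relation that passes to weak limits), all the $\overline{\eff}$ and $\Div\vc{u}$ contributions cancel, leaving
\begin{equation*}
	\limsup_{h\to 0}\int_0^t\int_\Om p(\vrho_h)\,\vrho_h\ dxds \le \int_0^t\int_\Om \overline{p(\vrho)}\,\vrho\ dxds,
\end{equation*}
together with the companion entropy inequality $\int_\Om \overline{\vrho\log\vrho}(t)\,dx \le \int_\Om \vrho(t)\log\vrho(t)\,dx + \tfrac{1}{\mu+\lambda}\bigl(\int_0^t\int_\Om \overline{p(\vrho)}\,\vrho - p(\vrho_h)\vrho_h\bigr)$, which I would organize so that the net conclusion is $\overline{\vrho\log\vrho} \le \vrho\log\vrho$ a.e.

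Since $z\mapsto z\log z$ is convex, Lemma \ref{lem:prelim} already gives the reverse inequality $\vrho\log\vrho \le \overline{\vrho\log\vrho}$ a.e., so in fact $\overline{\vrho\log\vrho}=\vrho\log\vrho$ a.e.\ on $\Dom$. The strict convexity of $z\mapsto z\log z$ on $(0,\infty)$ then lets me apply the second part of Lemma \ref{lem:prelim} to extract a subsequence with $\vrho_h\to\vrho$ a.e.\ on $\Dom$, which is the claim. The main obstacle I anticipate is the bookkeeping in the cancellation step: one must track the $\frac{1}{\mu+\lambda}$ factors, confirm that $p(\vrho_h)\vrho_h = a\vrho_h^{\gamma+1}$ is uniformly integrable (which is exactly why the higher-integrability Lemma \ref{lemma:higherorderpressure}, giving $p(\vrho_h)\inb L^2$ and hence $\vrho_h\inb L^{2\gamma}\subset L^{\gamma+1}$, was proved) so that $\overline{p(\vrho)\vrho}$ makes sense and equals a genuine $L^1$ weak limit, and verify that $\overline{p(\vrho)}\,\vrho \le \overline{p(\vrho)\vrho}$ — again a convexity fact — so that the inequality points in the direction needed to close the argument. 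A secondary technical point is the justification of dropping the nonnegative quadratic jump terms in \eqref{FEM:renormalized} when taking $\limsup$, which requires only their sign, not any quantitative control.
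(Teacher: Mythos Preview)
Your approach is essentially the same as the paper's: renormalize with $B(\vrho)=\vrho\log\vrho$, use Lemma~\ref{lemma:feireisl} for the limit identity and \eqref{FEM:renormalized} with $\phi_h\equiv 1$ for the discrete inequality, invoke Lemma~\ref{lemma:effectiveflux} to cancel the $\eff$ cross terms, and conclude $\overline{\vrho\log\vrho}=\vrho\log\vrho$ via Lemma~\ref{lem:prelim}. The one slip to fix is your displayed inequality $\limsup_h\int\!\!\int p(\vrho_h)\vrho_h \le \int\!\!\int \overline{p(\vrho)}\,\vrho$: the correct (and needed) direction is the opposite, $\overline{p(\vrho)}\,\vrho \le \overline{p(\vrho)\vrho}$, exactly the monotonicity fact you cite two lines later---with that sign corrected the cancellation yields $\int_\Om(\overline{\vrho\log\vrho}-\vrho\log\vrho)(t)\,dx\le -\tfrac{a}{\mu+\lambda}\int_0^t\!\int_\Om(\overline{\vrho^{\gamma+1}}-\overline{\vrho^\gamma}\vrho)\,dxds\le 0$, as in the paper.
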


\begin{proof}	
In view of Lemma \ref{lemma:densityconv}, the 
limit $(\vrho,\vc{u})$ is a weak solution 
of the continuity equation and hence, by Lemma \ref{lemma:feireisl}, 
also a renormalized solution. In particular, 
$$
\left(\vrho\log \vrho\right)_t 
+ \Div \left( \left(\vrho\log\vrho\right)
\vc{u}\right)=\vrho \Div \vc{u} 
\quad \text{in the weak sense on $\cDom$.}
$$

Since $t\mapsto \vrho\log \vrho$ is continuous 
with values in some Lebesgue space 
equipped with the weak topology, we can use this 
equation to obtain for any $t>0$
\begin{equation}\label{eq:stngdenconv-eq1}
	\int_{\Omega} \left(\vrho \log \vrho\right)(t)\ dx
	-\int_{\Omega}\vrho_{0}\log \vrho_{0}\ dx
	= -\int_{0}^t \int_{\Omega}\vrho \Div \vc{u}\ dxds
\end{equation}

Next, we specify $\phi_h\equiv 1$ as test function in the 
renormalized scheme \eqref{FEM:renormalized}, multiply by $\Delta t$,
and sum the result over $m$. Making use of the 
convexity of $z\log z$, we infer for any $m=1,\dots,M$ 
\begin{equation}\label{eq:stngdenconv-eq2}
	\int_{\Omega}\vrho^m_{h}\log \vrho^m_{h}\ dx
	-\int_{\Omega}\vrho^{0}_h\log \vrho^{0}_h\ dx  
	\leq -\sum_{k=1}^m \Delta t\int_{\Omega}\vrho^m_{h}
	\Div \vc{u}^m_{h}\ dxdt.
\end{equation}

In view of the convergences stated at the beginning of this section 
and strong convergence of the initial data, we 
can send $h \to 0$ in \eqref{eq:stngdenconv-eq2} to obtain
\begin{equation}\label{eq:stngdenconv-eq3}
	\int_{\Omega} \Bigl(\overline{\vrho \log \vrho}\Bigr)(t)\ dx
	-\int_{\Omega}\vrho_{0}\log \vrho_{0}\ dx
	\le -\int_{0}^t \int_{\Omega}\overline{\vrho \Div \vc{u}}\ dxds.
\end{equation}

Subtracting \eqref{eq:stngdenconv-eq1} 
from \eqref{eq:stngdenconv-eq3} gives
\begin{align*}%\label{eq:stngdenconv-ineq}
	\int_{\Omega}\Bigl(\overline{\vrho \log \vrho}-\vrho \log \vrho\Bigr)(t)\ dx
	& \leq -\int_{0}^t\int_{\Omega}
	\overline{\vrho\Div \vc{u}}-\vrho \Div \vc{u}\ dxds,
\end{align*}
for any $t\in (0,T)$. Lemma \ref{lemma:effectiveflux} tells us that
\begin{equation*}%\label{eq:stngdenconv-ident}
	\int_{0}^t\int_{\Omega} \overline{\vrho \Div \vc{u}}
	-\vrho \Div \vc{u}\ dxds 
	= \frac{a}{\mu + \lambda}\int_{0}^t\int_{\Omega}
	\overline{\vrho^{\gamma +1}}
	-\overline{\vrho^\gamma} \vrho\ dxds\ge 0,
\end{equation*}
where the last inequality follows 
as in  \cite{Feireisl:2004oe,Lions:1998ga}, so 
the following relation holds: 
$$
\overline{\vrho \log \vrho}=\vrho \log \vrho
\quad \text{a.e.~in $\Dom$.}
$$
Now an application of Lemma \ref{lem:prelim} brings the proof to an end.
\end{proof}

\subsection{Velocity scheme}\label{subsec:conv-velocity}

\begin{lemma}[Velocity equation]
The limit triple $(\vc{w},\vc{u},\vrho)$ 
constructed in \eqref{eq:basic-conv} is a weak solution of 
the velocity equation \eqref{eq:momentumeq} in the sense of \eqref{def:mixed-weak}.
\end{lemma}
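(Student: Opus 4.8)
The plan is to pass to the limit in the discrete momentum scheme \eqref{FEM:momentumeq}, using the convergences collected in \eqref{eq:basic-conv} together with the strong convergence of $\vrho_h$ established in Lemma \ref{lem:strong-conv}. First I would fix test functions $\vc{\eta}, \vc{v} \in \vc{C}^\infty_c((0,T)\times\Om)$ with $\vc{v}\cdot\nu = 0$ on $\pOm$, and replace them by their interpolants: set $\vc{v}_h(t,\cdot) = \vc{v}_h^m(\cdot) := \frac{1}{\Dt}\int_{t^{m-1}}^{t^m}\Pi_h^V\vc{v}(\cdot,s)\,ds$ and $\vc{\eta}_h(t,\cdot) = \vc{\eta}_h^m(\cdot) := \frac{1}{\Dt}\int_{t^{m-1}}^{t^m}\Pi_h^W\vc{\eta}(\cdot,s)\,ds$. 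By Lemma \ref{lemma:interpolation} we have $\vc{v}_h \to \vc{v}$ and $\Div\vc{v}_h \to \Div\vc{v}$ in $L^2(0,T;\vc{L}^2(\Om))$, and likewise $\vc{\eta}_h \to \vc{\eta}$ and $\Curl\vc{\eta}_h \to \Curl\vc{\eta}$ in $L^2(0,T;\vc{L}^2(\Om))$. Multiplying the two equations of \eqref{FEM:momentumeq} by $\Dt$, summing over $m=1,\dots,M$, and using these interpolants gives
\begin{equation*}
	\begin{split}
		& \int_0^T\int_\Om \mu\Curl\vc{w}_h\,\vc{v}_h + \left[(\mu+\lambda)\Div\vc{u}_h - p(\vrho_h)\right]\Div\vc{v}_h\ dxdt = \int_0^T\int_\Om \vc{f}_h\vc{v}_h\ dxdt, \\
		& \int_0^T\int_\Om \vc{w}_h\vc{\eta}_h - \vc{u}_h\Curl\vc{\eta}_h\ dxdt = 0.
	\end{split}
\end{equation*}

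Next I would pass to the limit term by term. The terms $\int\mu\Curl\vc{w}_h\,\vc{v}_h$, $\int(\mu+\lambda)\Div\vc{u}_h\Div\vc{v}_h$, $\int\vc{f}_h\vc{v}_h$, $\int\vc{w}_h\vc{\eta}_h$, and $\int\vc{u}_h\Curl\vc{\eta}_h$ all converge by weak--strong pairing: each involves a weakly convergent factor (from \eqref{eq:basic-conv}, noting $\Curl\vc{w}_h \weak \Curl\vc{w}$ and $\Div\vc{u}_h \weak \Div\vc{u}$) against a strongly convergent interpolant factor, and $\vc{f}_h\to\vc{f}$ strongly in $L^2((0,T)\times\Om)$. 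The only delicate term is the pressure term $\int p(\vrho_h)\Div\vc{v}_h\,dxdt$: here I would invoke Lemma \ref{lem:strong-conv}, which gives $\vrho_h \to \vrho$ a.e.\ in $\Dom$, hence $p(\vrho_h) = a\vrho_h^\gamma \to a\vrho^\gamma = p(\vrho)$ a.e.; combined with the uniform bound $p(\vrho_h) \inb L^2((0,T)\times\Om)$ from Lemma \ref{lemma:higherorderpressure} and Vitali's convergence theorem, this yields $p(\vrho_h) \to p(\vrho)$ strongly in, say, $L^q((0,T)\times\Om)$ for every $q<2$, which suffices to pass to the limit against $\Div\vc{v}_h$. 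Thus in the limit $h\to 0$ the discrete system becomes \eqref{def:mixed-weak} for all $\vc{\eta},\vc{v}\in\vc{C}^\infty_c((0,T)\times\Om)$ with $\vc{v}\cdot\nu=0$ on $\pOm$.

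Finally I would remove the smoothness restriction on the test functions by density: $\vc{C}^\infty_c((0,T)\times\Om)$ is dense in $L^2(0,T;\vc{W}^{\Curl,2}_0(\Om))$ (for the $\vc{\eta}$ slot, as already used in the proof of Lemma \ref{lemma:compactcurlpart}), and the analogous density statement holds for the $\vc{v}$ slot in $L^2(0,T;\vc{W}^{\Div,2}_0(\Om))$ \cite{Girault:1986fu}; since every term in \eqref{def:mixed-weak} is continuous with respect to these norms (using $\vc{w}\in L^2(0,T;\vc{W}^{\Curl,2}_0(\Om))$, $\vc{u}\in L^2(0,T;\vc{W}^{\Div,2}_0(\Om))$, $p(\vrho)\in L^2((0,T)\times\Om)$ and $\vc{f}\in\vc{L}^2(\Dom)$), the identities extend to all admissible test functions. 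This establishes that $(\vc{w},\vc{u},\vrho)$ solves \eqref{def:mixed-weak}, and together with Lemma \ref{lemma:densityconv} it follows that $(\vrho,\vc{u})$ is a weak solution in the sense of Definition \ref{def:weak}. The main obstacle is the pressure term, i.e.\ making sure the a.e.\ convergence from Lemma \ref{lem:strong-conv} upgrades, via the higher-integrability bound of Lemma \ref{lemma:higherorderpressure}, to strong $L^q$ convergence strong enough to pair with $\Div\vc{v}_h$; everything else is a routine weak--strong limit.
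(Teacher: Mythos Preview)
Your proposal is correct and follows essentially the same route as the paper's proof: interpolate smooth test functions into $\vc{V}_h$ and $\vc{W}_h$, sum the scheme, pass to the limit term by term via weak--strong pairing, and close by density. Your treatment of the pressure term is in fact slightly more explicit than the paper's (which simply asserts $p(\vrho_h)\to p(\vrho)$ in $L^2(\Dom)$), but the argument is the same in substance.
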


\begin{proof}
Fix $(\vc{v}, \vc{\eta}) \in \vc{C}^\infty_c((0,T)\times \Om)$, and introduce the projections 
$\vc{v}_{h} = \Pi_{h}^V \vc{v}$, $\vc{\eta}_{h} =\Pi_{h}^W \vc{\eta}$ and 
$\vc{v}_{h}^m = \frac{1}{\Delta t}\int_{t^{m-1}}^{t^m}\vc{v}_{h} \ dt$, 
$\vc{\eta}_{h}^m = \frac{1}{\Delta t}\int_{t^{m-1}}^{t^m}\vc{\eta}_{h} \ dt$. 

Utilizing $\vc{v}^m_{h}$ and $\vc{\eta}^m_{h}$ as test functions in 
the velocity scheme \eqref{FEM:momentumeq}, multiplying by $\Delta t$, 
and summing the result over $m$, we gather
\begin{equation}\label{eq:approx-mixed-weak}
	\begin{split}
		&\int_0^T\int_{\Omega}\mu\Curl \vc{w}_{h}\vc{v}_{h} 
		+ \left[(\mu + \lambda)\Div \vc{u}_{h}
		-p(\vrho_{h})\right]\Div \vc{v}_{h}\ dxdt
		= \int_0^T\int_{\Omega}\vc{f}_{h}\vc{v}_{h}\ dxdt, \\
		&\int_0^T\int_{\Omega}\vc{w}_{h}\vc{\eta}_{h} 
		-\vc{u}_{h}\Curl \vc{\eta}_{h} \ dxdt=0.
	\end{split}
\end{equation}

From Lemma \ref{lemma:interpolation} we have 
$\vc{v}_{h} \overset{h\to 0}{\to} \vc{v}$ in $L^2(0,T;\vc{W}_{0}^{\Div,2}(\Omega))$ 
and $\vc{\eta}_{h}\overset{h\to 0}{\to}\vc{\eta}$ 
in $L^2(0,T;\vc{W}^{\Curl,2}_{0}(\Omega))$. 
Furthermore, by Lemma \ref{lem:strong-conv} and 
the first part of \eqref{eq:basic-conv}, 
$p(\vrho_{h}) \overset{h\to 0}{\to} p(\vrho)$ in $L^2(\Dom)$. 
Hence, we can send $h \to 0$ in \eqref{eq:approx-mixed-weak} 
to obtain that the limit $(\vc{w},\vc{u},\vrho)$ constructed 
in \eqref{eq:basic-conv} satisfies \eqref{def:mixed-weak} for all 
test functions $(\vc{v}, \vc{\eta}) \in \vc{C}^\infty_c((0,T)\times \Om)$.
Since $\vc{C}^\infty_c((0,T)\times \Om)$ is dense 
in both $L^2(0,T;\vc{W}_{0}^{\Div,2}(\Omega))$ and $L^2(0,T;\vc{W}^{\Curl,2}_{0}(\Omega))$
\cite{Girault:1986fu}, this concludes the proof.
\end{proof}

%  ------------------------    APPENDIX --------------------------

\appendix

\section{Compactness of functions in $V^{0,\perp}_{h}(\Omega)$}
In this appendix we prove that discrete 
weakly curl free approximations in $\vc{V}^{0,\perp}(\Omega)$ 
with $L^2$ bounded divergence possesses an $\vc{L}^2$ 
space translation estimate, which was previously needed to conclude the weak convergence of the product
$\vrho_{h}\vc{u}_{h}$ to the product of the corresponding weak limits $\vrho \vc{u}$.
As part of the proof, in Lemma \ref{lemma:curlcontrol} we show that if 
a sequence $\{\vc{v}_{h}\}_{h>0}$ belongs to $\vc{V}_{h}^{0,\perp}(\Omega)$ and besides satisfies 
$\Div \vc{v}_{h} \in_{b} L^2(\Omega)$, then $\{\Curl \vc{v}_{h}\}_{h>0}$
and $\{\Div \vc{v}_{h}\}_{h>0}$ are actually compact in $W^{-1,2}(\Omega)$.
Thus, strong $L^2(\Omega)$ convergence of a subsequence of $\{\vc{v}_{h}\}_{h>0}$ 
follows directly from the div--curl lemma. However, this is not sufficient 
to conclude the sought after convergence of $\vrho_{h}\vc{u}_{h}$ 
(cf.~Subsection \ref{subsec:conv-density}). 
The problem is a lack of temporal control of the velocity approximations $\vc{u}_{h}$.

\subsection{Space translation estimate}
The argument is inspired by Brenner's work \cite{Brenner:2003fk} 
on Poincar\'e--Friedrich inequalities for piecewise $H^1$ vector fields. 
The basic idea is to project the relevant function into the Crouzeix--Raviart 
element space and then use the standard translation estimate satisfied by 
functions in this space (cf.~ Stummel \cite{Stummel:1980fk}). 
Then, since the relevant function is discrete weakly curl free, we have 
sufficient control on the $\Curl$ to suitably bound the projection error.

The Crouzeix--Raviart element space is defined as a non--conforming 
$\mathbb{P}_{1}$ element for each component of the vector field. 
That is, on each element $E \in E_{h}$ the Crouzeix--Raviart 
polynomial space $\vc{R}(E)$ is given by
$$
\vc{R}(E) = [\mathbb{P}^1(E)]^N,
$$
where $\mathbb{P}_{1}(E)$ is the space of linear scalar 
fields on $E$ and $N$ is the spatial dimension. The degrees of freedom of $\vc{R}(E)$ are 
the average integrals over the faces of $E$. 
The Crouzeix--Raviart element space, denoted $\vc{R}_{h}(\Omega)$, is 
formed on $E_{h}$ by matching the degrees of 
freedom on each face $\Gamma \in \Gamma_{h}$. Hence, $\vc{R}_{h}(\Omega)$ is 
discontinuous across element faces and thus leads 
to non--conforming discretizations of $\vc{H}^{1}(\Omega)$. 
However, it has the property that for any $\vc{v}_{h} \in \vc{R}_{h}$, 
$\int_{\Gamma}[\vc{v}_{h}]\ dS(x) = 0$ for all $\Gamma \in \Gamma_{h}$.

\begin{theorem}\label{theorem:spacetranslation}
Given $\vc{z}_{h}\in \vc{V}^{0,\perp}_{h}$, there exists a 
constant $C>0$, depending only on $\Om$ and the shape regularity of $E_h$, such 
that for every vector $\xi \in \mathbb{R}^N$,
%:Marginlabel - fredag; 20. mars, 2009, 14:34
\begin{equation}\label{eq:spacetranslation}
	\norm{\vc{z}_{h}(\cdot)-\vc{z}_{h}(\cdot-\xi)}_{\vc{L}^2(\Om_\xi)} 
	\leq C\left(\abs{\xi}^{\frac{4-N}{2}}+\abs{\xi}^2\right)^\frac{1}{2}
	\norm{\Div \vc{z}_{h}}_{L^2(\Omega)}, 
\end{equation}
where $\Om_\xi=\left\{x\in\Om:\operatorname{dist}(x,\partial\Om)>\xi\right\}$.
\end{theorem}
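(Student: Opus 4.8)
The plan is to follow Brenner's template \cite{Brenner:2003fk}: transplant $\vc{z}_{h}$ into the Crouzeix--Raviart space $\vc{R}_{h}(\Om)$, apply the Stummel--type translation estimate available there, and control the resulting interpolation error and broken gradient by $\norm{\Div\vc{z}_{h}}_{L^2(\Om)}$ using the discrete weakly curl free property that characterises $\vc{V}_{h}^{0,\perp}$. Throughout I write $\norm{D_{h}\vc{v}}_{\vc{L}^2(\Om)}^2:=\sum_{E\in E_{h}}\norm{D\vc{v}}_{\vc{L}^2(E)}^2$ for the broken gradient seminorm.

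First I would introduce the interpolation operator $\Pi_{h}^{\mathrm{CR}}$ onto $\vc{R}_{h}(\Om)$, defined component--wise through the face--average degrees of freedom (assigning to each interior face the mean of its two one--sided averages), and set $\vc{r}_{h}:=\Pi_{h}^{\mathrm{CR}}\vc{z}_{h}\in\vc{R}_{h}(\Om)$. Since $\vc{z}_{h}$ is a first degree polynomial on each $E$, $\vc{r}_{h}$ differs from $\vc{z}_{h}$ on $E$ only through the single face--average imposed at each $\Gamma\subset\partial E$, and that discrepancy equals one half the mean of the tangential jump $\jump{\vc{z}_{h}}_\Gamma$ (the normal part matches, as $\vc{z}_{h}\in\vc{V}_{h}$ has a continuous normal trace); scaling then gives
\begin{equation*}
	\norm{\vc{z}_{h}-\vc{r}_{h}}_{\vc{L}^2(\Om)}^2\le C\,h\!\!\sum_{\Gamma\in\Gamma_{h}^I}\!\!\norm{\jump{\vc{z}_{h}}_\Gamma}_{L^2(\Gamma)}^2,\qquad
	\norm{D_{h}(\vc{z}_{h}-\vc{r}_{h})}_{\vc{L}^2(\Om)}^2\le C\,h^{-1}\!\!\sum_{\Gamma\in\Gamma_{h}^I}\!\!\norm{\jump{\vc{z}_{h}}_\Gamma}_{L^2(\Gamma)}^2,
\end{equation*}
the second by an inverse estimate (Lemma~\ref{lemma:inverse}). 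Because $\vc{z}_{h}$ lies in the lowest order space $\vc{V}_{h}$, it is curl free on each element and $D\vc{z}_{h}|_E$ is a multiple of the identity, so $\norm{D_{h}\vc{z}_{h}}_{\vc{L}^2(\Om)}\le C\norm{\Div\vc{z}_{h}}_{L^2(\Om)}$. Everything thus hinges on the bound
\begin{equation*}
	\sum_{\Gamma\in\Gamma_{h}^I}h\,\norm{\jump{\vc{z}_{h}}_\Gamma}_{L^2(\Gamma)}^2\le C\,h^2\,\norm{\Div\vc{z}_{h}}_{L^2(\Om)}^2
\end{equation*}
(i.e.\ $\vc{z}_{h}$ is $O(h)$--close to being curl conforming), which is where $\vc{z}_{h}\perp_{\vc{L}^2}\Curl\vc{W}_{h}$ enters: for smooth $\vc{\psi}$ vanishing on $\pOm$ (a scalar field if $N=2$) one has $\int_\Om\vc{z}_{h}\cdot\Curl\vc{\psi}\,dx=\int_\Om\vc{z}_{h}\cdot\Curl(\vc{\psi}-\Pi_{h}^W\vc{\psi})\,dx$, because $\Curl\Pi_{h}^W\vc{\psi}=\Pi_{h}^V\Curl\vc{\psi}\in\Curl\vc{W}_{h}$ by the commuting diagram, and then Lemma~\ref{lemma:interpolation} together with the discrete Poincar\'e inequality \eqref{Poincare1} bounds the right side by $Ch\norm{\Div\vc{z}_{h}}_{L^2(\Om)}\norm{\vc{\psi}}_{\vc{H}^2(\Om)}$; converting this negative--norm smallness of the distributional curl of $\vc{z}_{h}$ (which is a sum of tangential jumps supported on the faces) into the displayed face--jump bound is done with the trace and inverse inequalities of Lemmas~\ref{lemma:edgebounds}--\ref{lemma:inverse} and suitable localized test functions.

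It then remains to estimate $\norm{\vc{r}_{h}(\cdot)-\vc{r}_{h}(\cdot-\xi)}_{\vc{L}^2(\Om_\xi)}$, which is the translation estimate of Stummel type for nonconforming $\mathbb{P}_1$ fields:
\begin{equation*}
	\norm{\vc{r}_{h}(\cdot)-\vc{r}_{h}(\cdot-\xi)}_{\vc{L}^2(\Om_\xi)}^2\le C\bigl(\abs{\xi}^{\frac{4-N}{2}}+\abs{\xi}^2\bigr)\norm{D_{h}\vc{r}_{h}}_{\vc{L}^2(\Om)}^2.
\end{equation*}
I would obtain it (or quote \cite{Stummel:1980fk,Brenner:2003fk}) by bounding $\abs{\vc{r}_{h}(x)-\vc{r}_{h}(x-\xi)}$ along the segment $[x-\xi,x]$ by the broken gradient plus the jumps at the faces the segment crosses, squaring, and integrating over $x\in\Om_\xi$: the gradient part produces $\abs{\xi}^2\norm{D_{h}\vc{r}_{h}}_{\vc{L}^2(\Om)}^2$, while a co--area/layer--counting argument for the crossed faces, combined with the zero--mean property $\int_\Gamma\jump{\vc{r}_{h}}_\Gamma\,dS=0$ and a Poincar\'e inequality on each face, yields the dimension--dependent term $\abs{\xi}^{(4-N)/2}\norm{D_{h}\vc{r}_{h}}_{\vc{L}^2(\Om)}^2$. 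Finally $\norm{D_{h}\vc{r}_{h}}_{\vc{L}^2(\Om)}\le\norm{D_{h}\vc{z}_{h}}_{\vc{L}^2(\Om)}+\norm{D_{h}(\vc{z}_{h}-\vc{r}_{h})}_{\vc{L}^2(\Om)}\le C\norm{\Div\vc{z}_{h}}_{L^2(\Om)}$, and the triangle inequality
$$
\norm{\vc{z}_{h}(\cdot)-\vc{z}_{h}(\cdot-\xi)}_{\vc{L}^2(\Om_\xi)}\le\norm{\vc{z}_{h}-\vc{r}_{h}}_{\vc{L}^2(\Om)}+\norm{\vc{r}_{h}(\cdot)-\vc{r}_{h}(\cdot-\xi)}_{\vc{L}^2(\Om_\xi)}+\norm{(\vc{z}_{h}-\vc{r}_{h})(\cdot-\xi)}_{\vc{L}^2(\Om_\xi)},
$$
with the last term $\le\norm{\vc{z}_{h}-\vc{r}_{h}}_{\vc{L}^2(\Om)}$ since $\Om_\xi-\xi\subset\Om$, delivers \eqref{eq:spacetranslation}.

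The main obstacle I anticipate is precisely the face--jump bound $\sum_\Gamma h\norm{\jump{\vc{z}_{h}}_\Gamma}_{L^2(\Gamma)}^2\le Ch^2\norm{\Div\vc{z}_{h}}_{L^2(\Om)}^2$ with a constant independent of $h$: it is the only step that uses the particular structure of $\vc{V}_{h}^{0,\perp}$ (the discrete Hodge decomposition, the commuting diagram, the discrete Poincar\'e inequalities) in an essential way, and the powers of $h$ must balance so that the $h^{-1}$ from the inverse estimate on $\vc{z}_{h}-\vc{r}_{h}$ is absorbed and the $h$--independence of the final constant survives. Making rigorous the passage from ``$\Curl\vc{z}_{h}$ small in a negative norm'' to ``$\sum_\Gamma\norm{\jump{\vc{z}_{h}}_\Gamma}_{L^2(\Gamma)}^2$ small'', by choosing good localized test functions near each face, is the delicate point; by contrast the Stummel translation estimate is classical and the reduction to it via $\Pi_{h}^{\mathrm{CR}}$ is routine.
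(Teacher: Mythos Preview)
Your overall architecture --- project into the Crouzeix--Raviart space, invoke Stummel's translation estimate there, and control the interpolation error via the discrete weakly curl--free property --- is exactly the paper's. But two of your ingredients are misstated, and in consequence you miss the one genuinely new step.

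First, Stummel's result for CR functions reads
\[
	\norm{\vc{r}_{h}(\cdot)-\vc{r}_{h}(\cdot-\xi)}_{\vc{L}^2(\Om_\xi)}^2
	\le C\bigl(h^2+\abs{\xi}^2\bigr)\,\norm{D_{h}\vc{r}_{h}}_{\vc{L}^2(\Om)}^2,
\]
not the $\abs{\xi}^{(4-N)/2}$ version you wrote; there is no dimension--dependent power of $\abs{\xi}$ in the CR translation estimate. Second, the tangential--jump control you aim for is too strong. What the negative--norm curl estimate actually buys (via localized test functions, exactly as you anticipate) is a bound on the \emph{face averages} of the jumps,
\[
	\sum_{\Gamma\in\Gamma_h^I}\Bigl|\int_\Gamma[\vc{z}_h\times\nu]\,dS\Bigr|^2
	\le C\,h^{N/2}\,\norm{\Div\vc{z}_h}_{L^2(\Om)}^2,
\]
which, inserted into Brenner's estimate (the jump term there carries $h^{2-N}$ times the squared \emph{integral} of the jump, not $h$ times the squared $L^2$ norm), yields
$\norm{\vc{z}_h-\vc{r}_h}_{\vc{L}^2(\Om)}^2\le C\,h^{(4-N)/2}\norm{\Div\vc{z}_h}_{L^2(\Om)}^2$.
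So after the triangle inequality you get
\[
	\norm{\vc{z}_{h}(\cdot)-\vc{z}_{h}(\cdot-\xi)}_{\vc{L}^2(\Om_\xi)}^2
	\le C\bigl(h^{(4-N)/2}+h^2+\abs{\xi}^2\bigr)\norm{\Div\vc{z}_h}_{L^2(\Om)}^2,
\]
which still depends on $h$ and does not, by itself, imply \eqref{eq:spacetranslation}: for fixed $h$ and $\abs{\xi}\to 0$ the right side does not vanish.

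The missing idea is a refinement argument that trades $h$ for $\abs{\xi}$. One introduces a shape--regular submesh $G_h$ of $E_h$ with maximal diameter less than $\abs{\xi}/3$, projects $\vc{z}_h$ onto the corresponding Nédélec space $\vc{V}_{\abs{\xi}}$, and applies the $h$--dependent estimate above on the fine mesh (where now ``$h$'' is $\le\abs{\xi}$). The projection error $\norm{\vc{z}_h-\Pi_{\abs{\xi}}^V\vc{z}_h}_{\vc{L}^2}$ is $O(\abs{\xi})\norm{D_h\vc{z}_h}_{\vc{L}^2}=O(\abs{\xi})\norm{\Div\vc{z}_h}_{L^2}$ because $\vc{z}_h$ is piecewise smooth on $E_h$ and each fine element sits inside a single coarse element; combining gives \eqref{eq:spacetranslation}. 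Your sketch never addresses how the $h$ in the intermediate bound is removed, and that is the step that closes the argument.
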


\begin{proof}
Let us introduce an interpolation operator 
$\Pi_{h}^R: \vc{V}_{h} \rightarrow \vc{R}_{h}$ by specifying
$$
\frac{1}{\abs{\Gamma}}\int_{\Gamma}\Pi_{h}^R \vc{z}_{h}\ dS(x) 
= \frac{1}{\abs{\Gamma}}\int_{\Gamma}\{\vc{z}_{h}\}\ dS(x), 
\qquad \forall \Gamma \in \Gamma^I_{h},
$$
where $\{\cdot\}$ denotes the average of the traces from the two sides of $\Gamma$. 
According to Brenner \cite{Brenner:2003fk}, we have the following error estimate:
\begin{equation}\label{eq:brennersestimate}
	\norm{\vc{z}_{h} - \Pi_{h}^R\vc{z}_{h}}_{\vc{L}^2(\Omega)}^2 
	\leq  C\left(\sum_{E \in E_{h}} h^2
	\norm{D\vc{z}_{h}}_{\vc{L}^2(\Omega)}^2 
	+ \sum_{\Gamma \in \Gamma^I_{h}}h^{2 - N}
	\abs{\int_{\Gamma}[\vc{z}_{h}] \ dS(x)}^2 \right).
\end{equation}
By a standard decomposition of vector fields,
\begin{equation*}
	\jump{\vc{z}_h}_\Gamma 
	= \jump{(\vc{z}_h \cdot \nu)\nu}_\Gamma 
	- \jump{(\vc{z}_h \times \nu)\times \nu}_\Gamma
	= - \jump{(\vc{z}_h \times \nu)\times \nu}_\Gamma,
\end{equation*}
where the last equality follows for the reason that 
$[\vc{z}_{h} \cdot \nu]_{\Gamma}=0$ for all $\Gamma \in \Gamma^I_{h}$. 
To have the above decomposition well--defined in two dimensions, 
 we set $(\vc{z}_h \times \nu)\times \nu = -(\vc{z}_h \times \nu)\tau$, 
where $\tau$ is the tangential vector.

Since $\nu$ is constant on each $\Gamma \in \Gamma_h$ with $|\nu| = 1$,
$$
\abs{\int_{\Gamma}[\vc{z}_{h}] ~dS(x)}^2 
= \abs{\left(\int_{\Gamma}[\vc{z}_{h}\times \nu] \ dS(x)\right)\times \nu}^2
\leq \abs{\int_{\Gamma}[\vc{z}_{h}\times \nu] \ dS(x)}^2, 
\quad \forall \Gamma \in \Gamma^I_h.
$$
As a result, applying \eqref{eq:boundonsumofjumps} of Lemma \ref{lemma:maxbound} below 
to \eqref{eq:brennersestimate} yields the error estimate
\begin{equation}\label{eq:diffOurandCR}
	\norm{\vc{z}_{h} - \Pi_{h}^R\vc{z}_{h}}_{\vc{L}^2(\Omega)}^2 
	\leq C\left(\sum_{E \in E_{h}} h^{2}\norm{D\vc{z}_{h}}_{\vc{L}^2(\Omega)}^2 
	%+ c|\Omega|^\frac{1}{2}
	+ h^{\frac{4-N}{2}}
	\norm{\Div \vc{z}_{h}}^2_{L^2(\Omega)}\right).
\end{equation}

To continue, fix an arbitrary $\xi \in \mathbb{R}^N$. 
By the triangle inequality, we write
\begin{equation}\label{eq:translation-addandsubtract1}
	\begin{split}
		&\norm{\vc{z}_{h}(\cdot)-\vc{z}_{h}(\cdot-\xi)}^2_{L^2(\Om_\xi)} \\
		& \leq C\Bigl(\norm{\vc{z}_{h}(\cdot)
		-\Pi_{h}^R \vc{z}_{h}(\cdot)}^2_{L^2(\Om_\xi)}
		\\ &\qquad\qquad 
		+ \norm{\Pi_{h}^R\vc{z}_{h}(\cdot)
		-\Pi_{h}^R\vc{z}_{h}(\cdot-\xi)}^2_{L^2(\Om_\xi)}\\
		& \qquad\qquad\qquad
		+ \norm{\Pi_{h}^R \vc{z}_{h}(\cdot-\xi)
		-\vc{z}_{h}(\cdot-\xi)}^2_{L^2(\Om_\xi)}\Bigr),
	\end{split}
\end{equation}
which transfers the translation onto the projected function $\Pi_h^R \vc{z}$.

Since $\Pi_h^R \vc{z}_h$ is a function in the Crouzeix--Raviart element space, 
Stummel's work \cite[Theorem 2.1]{Stummel:1980fk} can be applied, yielding
\begin{equation}\label{eq:translation-new}
	\begin{split}
		&\norm{\Pi_{h}^R\vc{z}_{h}(\cdot)-\Pi_{h}^R\vc{z}_{h}(\cdot-\xi)}_{\vc{L}^2(\Om_\xi)}^2 
		\\ & \quad
		\leq C\left(h^2 + |\xi|^2\right)\sum_{E\in E_{h}}
		\norm{D\Pi_h^R\vc{z}_{h}}_{\vc{L}^2(E)}^2
		\\ & \quad
		\leq C\left(h^2+|\xi|^2\right)
		\sum_{E\in E_{h}}\norm{D\vc{z}_{h}}_{\vc{L}^2(E)}^2,
	\end{split}
\end{equation}
where the constant $C$ only depends on $\Om$ and the shape regularity of $E_h$. 

Utilizing \eqref{eq:diffOurandCR} and \eqref{eq:translation-new} 
in \eqref{eq:translation-addandsubtract1} gives 
\begin{align*}
	&\norm{\vc{z}_{h}(\cdot)-\vc{z}_{h}(\cdot-\xi)}_{\vc{L}^2(\Om_\xi)}^2 
	\\ & \quad \leq 
	C \left(h^{2}+|\xi|^2\right)
	\sum_{E\in E_{h}}\norm{D\vc{z}_{h}}_{\vc{L}^2(E)}^2 
	+h^{\frac{4-N}{2}}\norm{\Div \vc{z}_{h}}_{L^2(\Om)}^2.		
\end{align*}
Since $\abs{D\vc{z}_h} = \abs{\Div \vc{z}_h}$ on 
each $E \in E_h$, this immediately yields 
\begin{equation}\label{eq:spacetranslation-temp}
	\norm{\vc{z}_{h}(\cdot)-\vc{z}_{h}(\cdot-\xi)}_{\vc{L}^2(\Om_\xi)} 
	\leq C\left(h^{\frac{4-N}{2}}+h^2+\abs{\xi}^2\right)^\frac{1}{2}
	\norm{\Div \vc{z}_{h}}_{L^2(\Omega)}, 
\end{equation}

Finally, let us argue that \eqref{eq:spacetranslation-temp} 
implies \eqref{eq:spacetranslation}. To this end, fix any $\xi \in \mathbb{R}^N$ and $h>0$. 
There exists a shape regular partition $G_{h}$ of $\Omega$ into triangles/tetrahedrals 
such that $\cup_{E \in G_{h}} E = \cup_{E \in E_{h}}E$, 
each $E \in G_{h}$ has a non--empty intersection with at most one element $E \in E_{h}$, and
such that 
$$
\max_{E \in G_{h}}\operatorname{diam}(E) < \frac{|\xi|}{3}.
$$
Next, let $\vc{V}_{|\xi|}(\Omega)$ denote the 
first order div conforming N{\'e}d{\'e}lec element space of first kind formed on the 
mesh $G_{h}$, and let $\Pi_{|\xi|}^V:\vc{V}_{h}(\Omega)\to\vc{V}_{|\xi|}(\Omega)$ 
denote the usual projection into this space.

Now, for any $\vc{z}_{h} \in \vc{V}_{h}^{0,\perp}(\Omega)$, we calculate
\begin{equation}\label{eq:finallyd1}
	\begin{split}
		&\norm{\vc{z}_{h}(\cdot) - \vc{z}_{h}(\cdot-\xi)}_{\vc{L}^2(\Om_\xi)}^2 \\
		& \qquad \leq \norm{\vc{z}_{h}(\cdot) - \Pi_{|\xi|}^V\vc{z}_{h}(\cdot)}_{\vc{L}^2(\Om_\xi)}^2
		+\norm{\Pi_{|\xi|}^V \vc{z}_{h}(\cdot)-\Pi_{|\xi|}^V\vc{z}_{h}(\cdot-\xi)}_{\vc{L}^2(\Om_\xi)}^2 
		\\ &\qquad\qquad 
		+\norm{\Pi_{|\xi|}^V\vc{z}_{h}(\cdot-\xi)-\vc{z}_{h}(\cdot-\xi)}_{\vc{L}^2(\Om_\xi)}^2.
	\end{split}
\end{equation}

By Theorem \ref{theorem:spacetranslation},
\begin{equation}\label{eq:finallyd2}
	\norm{\Pi_{|\xi|}^V \vc{z}_{h}(\cdot)-\Pi_{|\xi|}^V\vc{z}_{h}(\cdot-\xi)}_{\vc{L}^2(\Om_\xi)}^2  
	\leq  C\left(|\xi|^{\frac{4-N}{2}} + \abs{\xi}^2\right)^\frac{1}{2}
	\norm{\Div \vc{z}_{h}}_{L^2(\Omega)}.
\end{equation}
By Lemma \ref{lemma:interpolation},
\begin{equation}\label{eq:finallyd3}
	\norm{\vc{z}_{h}(\cdot)-\Pi_{|\xi|}^V \vc{z}_{h}(\cdot)}_{\vc{L}^2(\Om_\xi)}^2 
	\leq C\abs{\xi}^2 \sum_{E \in E_{h}}
	\norm{D\vc{z}_{h}}_{\vc{L}^2(\Omega)}^2 
	= C\abs{\xi}^2\norm{\Div \vc{z}_{h}}_{L^2(\Om)}^2.
\end{equation}
Inserting \eqref{eq:finallyd2} and \eqref{eq:finallyd3} 
into \eqref{eq:finallyd1} completes the proof of \eqref{eq:spacetranslation}.
\end{proof}

\subsection{Tangential jumps}
In the proof of Theorem \ref{theorem:spacetranslation} 
we harnessed

\begin{lemma}\label{lemma:maxbound}
Given $\vc{z}_{h}\in \vc{V}^{0,\perp}_{h}$, there exists a 
constant $C>0$, independent of $h$, such that
\begin{equation}\label{eq:boundoneachjump}
	\abs{\int_{\Gamma}[\vc{z}_{h} \times \nu]\ dS(x)} 
	\leq C h^{\frac{N}{2}}\norm{\Div \vc{z}_{h}}_{L^2(\Omega)},
	\qquad \forall \Gamma \in \Gamma_h,
\end{equation}
and
\begin{equation}\label{eq:boundonsumofjumps}
	\sum_{\Gamma \in \Gamma^I_{h}}\abs{\int_{\Gamma}[\vc{z}_{h}\times \nu]\ dS(x)}^2 
	\leq C \abs{\Om}^\frac{1}{2} h^{\frac{N}{2}}
	\norm{\Div \vc{z}_{h}}_{L^2(\Omega)}^2.
\end{equation}
\end{lemma}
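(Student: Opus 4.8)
The plan is to exploit the fact that $\vc{z}_h \in \vc{V}_h^{0,\perp}$ is \emph{discrete weakly curl free}, i.e.\ orthogonal to $\Curl\vc{W}_h$, in order to estimate the tangential jumps across faces by the divergence. First I would fix a face $\Gamma \in \Gamma_h^I$ shared by $E_+,E_-$ and construct a suitable test function $\vc{\eta}_h \in \vc{W}_h$ supported on $E_+ \cup E_-$ (for interior faces) whose relevant edge/face degrees of freedom localize $\Curl\vc{\eta}_h$ near $\Gamma$. Using the integration-by-parts identity \eqref{eq:integrationbyparts} element by element,
\begin{equation*}
	\int_\Om \vc{z}_h \Curl\vc{\eta}_h\,dx
	= \sum_{E} \int_E \Curl\vc{z}_h\, \vc{\eta}_h\,dx
	- \sum_{\Gamma'} \int_{\Gamma'} [\vc{z}_h\times\nu]\cdot\vc{\eta}_h\,dS(x),
\end{equation*}
and since $\vc{z}_h$ is a piecewise polynomial with $\Curl\vc{z}_h = 0$ on each $E$ (the lowest-order N\'ed\'elec div space consists of $\vc{a}+b\vc{x}$, which is curl free), the volume term drops and the left side vanishes by $\vc{z}_h \perp \Curl\vc{W}_h$. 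This isolates $\int_\Gamma [\vc{z}_h\times\nu]\cdot\vc{\eta}_h\,dS(x)$, and choosing $\vc{\eta}_h$ so that $\int_\Gamma \vc{\eta}_h \cdot$ (the constant vector $[\vc{z}_h\times\nu]$-direction)$\,dS(x)$ reproduces the jump integral while keeping $\norm{\vc{\eta}_h}$ controlled, I recover a bound on $\abs{\int_\Gamma[\vc{z}_h\times\nu]\,dS(x)}$.

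The next step is to convert the resulting right-hand side into $\norm{\Div\vc{z}_h}_{L^2}$. After the test-function argument the estimate will involve local norms of $\vc{z}_h$ on $E_\pm$; using the inverse inequality Lemma \ref{lemma:inverse} and the discrete Poincar\'e inequality \eqref{Poincare1} (which applies since $\vc{z}_h \in \vc{V}_h^{0,\perp}$), together with the identity $\abs{D\vc{z}_h} = \abs{\Div\vc{z}_h}$ on each $E$ valid for the lowest-order Raviart--Thomas/N\'ed\'elec space, I convert everything to $\norm{\Div\vc{z}_h}$. Careful tracking of the powers of $h$ from scaling the edge/face degrees of freedom and the $L^2(\Gamma)$ versus $L^2(E)$ trace estimate (Lemma \ref{lemma:edgebounds}) should yield exactly the factor $h^{N/2}$ in \eqref{eq:boundoneachjump}; here one may need to pass to a global bound $\norm{\Div\vc{z}_h}_{L^2(\Om)}$ rather than the local one on $E_\pm$, which is why \eqref{eq:boundoneachjump} is stated with the full-domain norm. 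For \eqref{eq:boundonsumofjumps} I would square \eqref{eq:boundoneachjump}, sum over all interior faces, use that the number of faces is $O(h^{-N}\abs{\Om})$, but more efficiently re-run the argument summing the localized contributions directly: the bounded overlap of the patches $\{E_+\cup E_-\}$ over $\Gamma \in \Gamma_h^I$ gives $\sum_\Gamma (\text{local } \norm{\Div\vc{z}_h}_{L^2}^2) \le C\norm{\Div\vc{z}_h}_{L^2(\Om)}^2$, and combining with one factor of $h^{N/2}$ and the cardinality estimate $\#\Gamma_h^I \le C h^{-N}\abs{\Om}$ estimated crudely in $L^1$--$L^2$ fashion produces the stated $\abs{\Om}^{1/2} h^{N/2}$ factor.

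The main obstacle I anticipate is the construction of the right local test function $\vc{\eta}_h \in \vc{W}_h$: in 3D the degrees of freedom of $\vc{W}_h$ are edge integrals of the tangential component, not face integrals, so I cannot directly prescribe $\int_\Gamma \vc{\eta}_h\,dS(x)$; I will instead need to choose $\vc{\eta}_h$ via its edge DOFs on the edges of $\Gamma$ and relate $\int_\Gamma[\vc{z}_h\times\nu]\cdot\vc{\eta}_h\,dS(x)$ to $\int_\Gamma[\vc{z}_h\times\nu]\,dS(x)$ using that $[\vc{z}_h\times\nu]$ is affine on $\Gamma$ and that a well-chosen N\'ed\'elec basis function has mean over $\Gamma$ pointing in a controlled direction — this is where the geometry of the lowest-order N\'ed\'elec space enters, and where scaling must be done on the reference element with the correct Piola-type transforms (as flagged in the remark preceding Lemma \ref{lemma:interpolation}). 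The 2D case is simpler since $\vc{W}_h$ is scalar Lagrange and $\vc{z}_h\times\nu$ is a scalar, so there I would test against a hat-function-type element and the argument is essentially a one-line integration by parts.
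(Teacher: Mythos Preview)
Your plan has a genuine gap at its core. Testing with $\vc{\eta}_h\in\vc{W}_h$ and using the orthogonality $\vc{z}_h\perp\Curl\vc{W}_h$ gives the \emph{identity}
\[
0=\int_\Om \vc{z}_h\,\Curl\vc{\eta}_h\,dx=-\sum_{\Gamma'}\int_{\Gamma'}[\vc{z}_h\times\nu]\cdot\vc{\eta}_h\,dS,
\]
so ``isolating'' the term at $\Gamma$ only expresses it as (minus) the sum of the same kind of tangential-jump integrals on neighbouring faces. That is circular: there is no mechanism by which $\norm{\Div\vc{z}_h}$ appears on the right-hand side, and if you brute-force the neighbouring contributions by a trace estimate you pick up $h^{-1/2}\norm{\vc{z}_h}_{L^2(E)}$ and end up one power of $h$ short of \eqref{eq:boundoneachjump}, even after the discrete Poincar\'e inequality. (Separately, as you already suspected, there is no nontrivial $\vc{\eta}_h\in\vc{W}_h$ supported on $E_+\cup E_-$: all edge/vertex degrees of freedom on $\partial(E_+\cup E_-)$ would have to vanish, which kills $\vc{\eta}_h$.)

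The paper's argument is structurally different. It tests not with $\vc{\eta}_h\in\vc{W}_h$ but with a general $\vc{\phi}\in\vc{W}^{1,2}_0(\Om)$, and the crucial input is the negative-norm bound
\[
\Bigl|\int_\Om \vc{z}_h\,\Curl\vc{\phi}\,dx\Bigr|\le Ch\,\norm{\vc{\phi}}_{\vc{W}^{1,2}(\Om)}\norm{\Div\vc{z}_h}_{L^2(\Om)},
\]
which is Lemma~\ref{lemma:curlcontrol}; that lemma is where the orthogonality to $\Curl\vc{W}_h$ is actually used, combined with an error estimate for a mixed finite element discretisation of a Hodge--Laplace problem (Theorem~\ref{theorem:mixedelliptic}). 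This is the step that inserts $\norm{\Div\vc{z}_h}$ on the right with the correct factor of $h$. Only then does one choose a concrete localized $\vc{\phi}$: a scalar continuous piecewise linear bump $\phi_\Gamma$ on a barycentric refinement of $E_+\cup E_-$ (Lemma~\ref{lem:testfunc}) with $\norm{\phi_\Gamma}_{W^{1,2}}\sim h^{(N-2)/2}$, which reproduces $\frac{1}{N}\int_\Gamma[\vc{z}_h\times\nu]\,dS$ and yields \eqref{eq:boundoneachjump}. For \eqref{eq:boundonsumofjumps} one does not sum \eqref{eq:boundoneachjump} but tests once with $\vc{\phi}=\sum_\Gamma\phi_\Gamma\int_\Gamma[\vc{z}_h\times\nu]\,dS$ built from disjointly supported bumps, and bounds $\norm{D\vc{\phi}}_{L^2}$ via \eqref{eq:boundoneachjump}; this is what produces the $\abs{\Om}^{1/2}h^{N/2}$ factor. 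The missing idea in your plan is precisely this passage through a non-discrete test function together with the $W^{-1,2}$ curl estimate.
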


\begin{proof}
Let $\vc{\phi} \in \vc{W}^{1,2}_0(\Om)$. In virtue 
of Lemma \ref{lemma:curlcontrol} below,
\begin{equation*}%\label{eq:startingpoint}
	\abs{\int_{\Omega}\vc{z}_{h}\Curl \vc{\phi}\ dx} 
	\leq C h\norm{\vc{\phi}}_{\vc{W}^{1,2}(\Omega)}
	\norm{\Div \vc{z}_{h}}_{L^2(\Omega)}.
\end{equation*}
Applying integration by parts \eqref{eq:integrationbyparts}, keeping 
in mind that $\Curl \vc{z}_h|_E = 0$ for all $E \in E_h$, yields
$$
\sum_{\Gamma \in \Gamma^I_{h}}
\int_{\Gamma}\vc{\phi}[\vc{z}_{h} \times \nu]\ dS(x)
=\int_{\Omega}\vc{z}_{h}\Curl \vc{\phi}\ dx,
$$
and so
\begin{equation}\label{eq:basiceq}
	\abs{\sum_{\Gamma \in \Gamma^I_{h}}
	\int_{\Gamma}\vc{\phi}[\vc{z}_{h} \times \nu]\ dS(x)} 
	\leq C h\norm{\vc{\phi}}_{\vc{W}^{1,2}(\Omega)}
	\norm{\Div \vc{z}_{h}}_{L^2(\Omega)}.
\end{equation}

The bound \eqref{eq:basiceq} serves as the starting point for proving 
\eqref{eq:boundoneachjump} and \eqref{eq:boundonsumofjumps}; the remaining objective is to 
construct a suitable test function $\vc{\phi}$. Fix $\Gamma \in \Gamma^I_{h}$. 
Let $E_{-}$, $E_{+}$ denote 
the two elements in $E_{h}$ sharing the egde/face $\Gamma$, where $E_{-}$, $E_{+}$ 
are chosen so that $\nu$ points from $E_{-}$ to $E_{+}$.
In view of Lemma \ref{lem:testfunc}, we can choose a continuous piecewise linear 
(scalar) function $\tilde{\phi}$ on $\Gamma$ such that
$$
\int_{\Gamma}\tilde{\phi}f\ dx =\frac{1}{N}\int_{\Gamma}f\ dx, 
\qquad \forall f \in \mathbb{P}_{1}(\Gamma).
$$
Denote by $\phi_{\partial E}$ the extension by zero of $\tilde{\phi}$ to 
$\left(\partial E_{-}\setminus\Gamma\right)\bigcup 
\left(\partial E_+\setminus\Gamma\right)$, and fix a 
piecewise affine function $\phi_{E}$ on $E_{-}\cup E_{+}$ such that 
$\phi_{E}\big|_{\partial E_{+} \cup \partial E_-} = \phi_{\partial E}$. 
Clearly, $\phi_E$ can be chosen such that
$$
\text{$\abs{D\phi_{E}} \leq C h^{-1}$ 
in the interior of $E_{-}\cup E_{+}$.}
$$
Finally, let $\phi_\Gamma$ denote the extension 
by zero of $\phi_E$ to all of $\Om$.

The function $\phi_\Gamma$ possesses the following properties: 
$\phi_\Gamma \in W^{1,2}_{0}(\Omega)$, $\phi_\Gamma\big|_{\tilde\Gamma}=0$ 
for all $\tilde \Gamma \in \Gamma_{h}$ such that $\tilde\Gamma \neq \Gamma$, and
$$
\norm{\phi_\Gamma}^2_{W^{1,2}(\Omega)} 
= \norm{\phi_\Gamma\big|_{E_{-}}}^2_{W^{1,2}(E_{-})} 
+\norm{\phi_\Gamma\big|_{E_{+}}}^2_{W^{1,2}(E_{+})} 
\leq Ch^{N}\left(1+h^{-2}\right).
$$

If $N=2$ (curl is scalar), then we opt for $\vc{\phi}=\phi_\Gamma$ 
in \eqref{eq:basiceq} to obtain 
\begin{equation}\label{eq:firstpartproved}
	\abs{\int_{\Gamma} [\vc{z}_{h} \times \nu]\ dS(x)} 
	\leq C h^{\frac{N}{2}}\norm{\Div \vc{z}_{h}}_{L^2(\Om)}.
\end{equation}
If $N=3$, \eqref{eq:firstpartproved} still holds. Indeed, to conclude 
we can in \eqref{eq:basiceq} successively take $\vc{\phi}= [\phi_\Gamma,0,0]^T$, 
$\vc{\phi}= [0,\phi_\Gamma,0]^T$, and $\vc{\phi}= [0,0,\phi_\Gamma]^T$. 
Since $\Gamma \in \Gamma_{h}$ was arbitrary, this 
concludes the proof of \eqref{eq:boundoneachjump}.

To establish \eqref{eq:boundonsumofjumps}, we introduce the test function
$$
\vc{\phi}_{\Gamma}= 
\phi_\Gamma \int_\Gamma \jump{\vc{z}_h \times \nu} \ dS(x),
\qquad \forall \Gamma \in \Gamma^I_{h},
$$
where $\phi_\Gamma$ is constructed as above 
with the additional requirement that
$$
\operatorname{supp}\,\phi_{\Gamma} \bigcap 
\operatorname{supp}\,\phi_{\tilde\Gamma}=\emptyset, 
\qquad \forall \tilde\Gamma \neq \Gamma.
$$
We have 
$$
\sup_{x \in \Om} \abs{D\vc{\phi}_{\Gamma}}
\leq C h^{-1}\abs{\int_{\Gamma}[\vc{z}_{h} \times \nu]\ dS(x)},
\qquad \forall \Gamma \in \Gamma^I_h,
$$
and, for each $\Gamma\in\Gamma_h$,
$$
\int_\Gamma \vc{\phi}_\Gamma f \ dS(x)
= \frac{1}{N}\left(\int_\Gamma \jump{\vc{z}_h \times \nu}~dS(x)\right)
\left(\int_\Gamma f~dS(x)\right), \qquad 
\forall f\in \mathbb{P}^1(\Gamma).
$$

Finally, we set $\vc{\phi}:=\sum_{\Gamma \in \Gamma^I_{h}}\vc{\phi}_{\Gamma}$; this 
function satisfies $\vc{\phi}\in\vc{W}^{1,2}_{0}(\Omega)$ and
$$
\sup_{x \in \Omega} \abs{D\vc{\phi}} 
\leq C h^{-1}\max_{\Gamma \in \Gamma^I_{h}}
\abs{\int_{\Gamma}[\vc{z}_{h}\times \nu]\ dS(x)} 
\leq  C h^\frac{N-2}{2}
\norm{\Div \vc{z}_{h}}_{L^2(\Omega)}.
$$
The last inequality follows from \eqref{eq:boundoneachjump}. 
A direct calculation gives
$$
\norm{D\vc{\phi}}_{\vc{L}^2(\Omega)} 
\leq C h^\frac{N-2}{2}
\abs{\Omega}^\frac{1}{2}
\norm{\Div \vc{z}_{h}}_{L^2(\Omega)}.
$$

Setting $\vc{\phi}$ as test function in \eqref{eq:basiceq} 
immediately gives the estimate
\begin{equation*}%\label{eq:secondpartproved}
	\sum_{\Gamma \in \Gamma_h^I}\abs{\int_{\Gamma} [\vc{z}_{h} \times \nu]\ dS(x)}^2 
	\leq C h^{\frac{N}{2}}\abs{\Omega}^\frac{1}{2}
	\norm{\Div \vc{z}_{h}}^2_{L^2(\Omega)},
\end{equation*}
which is \eqref{eq:boundonsumofjumps}.
\end{proof}

The next lemma provides us with the specific 
test function that was brought into service in the above proof.

\begin{lemma}\label{lem:testfunc}
Fix any $\Gamma \in \Gamma_h$. There exists a continuous 
piecewise linear (scalar) function $\phi$ on $\Gamma$ such that 
$\phi|_{\partial \Gamma} = 0$, $|\phi(x)| \leq 1$ $\forall x \in \Gamma$, and
\begin{equation}\label{eq:phireq}
	\int_\Gamma \phi f \ dx = 
	\frac{1}{N}\int_\Gamma f\ dx, 
	\qquad \forall f \in \mathbb{P}^1(\Gamma),
\end{equation}
where $N$ is the spatial dimension.
\end{lemma}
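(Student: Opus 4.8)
The plan is to reduce the statement to a single reference element and there exhibit $\phi$ explicitly as a nodal ``hat'' function. First I would observe that all three requirements are affine-invariant: let $F$ be the affine bijection from the reference $(N-1)$-simplex $\hat\Gamma$, with vertices $v_0,\dots,v_{N-1}$, onto $\Gamma$, and suppose $\hat\phi$ is a function on $\hat\Gamma$ with the desired properties. Then $\phi:=\hat\phi\circ F^{-1}$ is continuous and piecewise linear on $\Gamma$ (with respect to the $F$-image of the mesh on $\hat\Gamma$), satisfies $\phi|_{\partial\Gamma}=0$ and $\abs{\phi}\le 1$ pointwise, and, since $f\circ F\in\mathbb{P}^1(\hat\Gamma)$ whenever $f\in\mathbb{P}^1(\Gamma)$, the change of variables $\int_\Gamma\phi f\,dx=\abs{\det DF}\int_{\hat\Gamma}\hat\phi\,(f\circ F)\,d\hat x$ shows that \eqref{eq:phireq} on $\Gamma$ follows from the analogous identity on $\hat\Gamma$. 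Thus it suffices to treat $\hat\Gamma$.

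On $\hat\Gamma$ let $b=\tfrac1N\sum_{i}v_i$ be the barycenter, and refine $\hat\Gamma$ into the $N$ subsimplices $T_i:=\operatorname{conv}\bigl(\{b\}\cup\{v_j:j\ne i\}\bigr)$, $i=0,\dots,N-1$. Define $\hat\phi$ to be the continuous function that is affine on each $T_i$ with $\hat\phi(b)=1$ and $\hat\phi(v_i)=0$ for every $i$; this is precisely the nodal basis function at $b$ for the refined mesh, so $0\le\hat\phi\le1$. Each boundary facet $F_i=\operatorname{conv}\{v_j:j\ne i\}$ is a facet of $T_i$ and does not contain the value-$1$ node $b$, hence $\hat\phi\equiv0$ on $F_i$ and therefore on all of $\partial\hat\Gamma$; this gives the first two properties. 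For \eqref{eq:phireq} it is enough, by linearity, to test against the barycentric coordinates $\lambda_0,\dots,\lambda_{N-1}$, which form a basis of $\mathbb{P}^1(\hat\Gamma)$.

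The remaining computation is short. Connecting the barycenter to the vertices splits $\hat\Gamma$ into $N$ pieces of equal volume $\abs{\hat\Gamma}/N$: for each $i$, $T_i$ has the same base $F_i$ as $\hat\Gamma$ and height equal to $\lambda_i(b)=\tfrac1N$ times the height of $\hat\Gamma$ over $F_i$. Since the integral of an affine function over a simplex equals the volume times the mean of the vertex values, $\int_{T_i}\hat\phi\,d\hat x=\tfrac1N\cdot\tfrac{\abs{\hat\Gamma}}{N}$, so $\int_{\hat\Gamma}\hat\phi\,d\hat x=\abs{\hat\Gamma}/N$, which is exactly \eqref{eq:phireq} for $f\equiv1$. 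Next, $\hat\phi$ is invariant under the permutations of $v_0,\dots,v_{N-1}$, which permute the $\lambda_i$ among themselves, so $\int_{\hat\Gamma}\hat\phi\,\lambda_i\,d\hat x$ is independent of $i$; summing over $i$ and using $\sum_i\lambda_i=1$ gives $\int_{\hat\Gamma}\hat\phi\,\lambda_i\,d\hat x=\tfrac1N\int_{\hat\Gamma}\hat\phi\,d\hat x=\abs{\hat\Gamma}/N^2=\tfrac1N\int_{\hat\Gamma}\lambda_i\,d\hat x$, i.e.\ \eqref{eq:phireq} for $f=\lambda_i$. Pulling back through $F$ completes the proof.

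I do not expect a genuine obstacle here; the only things to pin down carefully are the bookkeeping of the barycentric refinement (equal subvolumes, vanishing of $\hat\phi$ on the boundary facets, basis property of the $\lambda_i$) and the small but decisive observation that the normalization $\tfrac1N$ in \eqref{eq:phireq} is exactly the one for which the \emph{undilated} hat function — value precisely $1$ at the barycenter — already satisfies the moment identity, so no rescaling of $\hat\phi$ is required.
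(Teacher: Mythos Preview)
Your proof is correct and follows essentially the same construction as the paper: the barycentric refinement of $\Gamma$ with the hat function equal to $1$ at the barycenter and $0$ at the original vertices. The paper simply asserts that \eqref{eq:phireq} holds ``by direct calculation,'' whereas you actually carry out that calculation cleanly via the vertex-averaging rule and the symmetry argument on the $\lambda_i$; the preliminary reduction to a reference simplex is a harmless convenience.
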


\begin{proof}
Let $b$ denote the barycentric middle point with respect 
to the vertices of $\Gamma$. Let $T_h$ be the triangulation 
of $\Gamma$ obtained by setting $b$ as a vertex in addition
to the vertices of $\Gamma$. On $T_h$ let $L_h(\Gamma)$ denote the standard 
finite element space of continuous piecewise linear functions. Any 
function $\phi_h \in L_h(\Gamma)$ is 
uniquely determined by it's value at the vertices. 

Now the relevant test function $\phi \in L_h(\Gamma)$ is obtained 
by requiring
$$
\text{$\phi(b)= 1$ and $\phi(v_i)=0$, for all vertices $v_i$ at $\pOm$.}
$$
By direct calculation it can be verified that $\phi$ satisfies \eqref{eq:phireq}.
\end{proof}

\subsection{Negative space compactness of the curl}
In the proof of Lemma \ref{lemma:maxbound}, the essential ingredient 
was an estimate on the $W^{-1,2}$ norm of $\Curl \vc{z}_{h}$. 
In this subsection, we prove this result.

\begin{theorem}
\label{theorem:mixedelliptic}
Consider the mixed Laplace-type problem
\begin{equation}\label{eq:hodgelaplace}
	\begin{split}
		& \Curl \vc{w} - D\Div \vc{u} = \vc{f}, 
		\, \vc{w} = \Curl \vc{u} \quad \text{in $\Om$,} \\
		& \vc{u}\cdot \nu = 0, \, \vc{w} \times \nu = 0 
		\quad \text{on $\pOm$,} \\
	\end{split}
\end{equation}
where we assume $\vc{f} \in \vc{L}^2(\Omega)$. There exists a pair
$$
(\vc{w},\vc{u}) \in \vc{W}_{0}^{\Curl,2}(\Omega)\times \vc{W}_{0}^{\Div, 2}(\Omega), 
$$ 
satisfying \eqref{eq:hodgelaplace} in the weak sense. 
Moreover, there exists a pair
$$
(\vc{w}_h,\vc{u}_h)\in \vc{W}_{h}(\Omega) \times  \vc{V}_{h}(\Omega), \qquad 
$$
satisfying the corresponding mixed finite element formulation of \eqref{eq:hodgelaplace}.
Finally, the following error estimate holds:
\begin{equation}\label{eq:mixedellipticregularity}
	\begin{split}
		\norm{\vc{w} - \vc{w}_{h}}_{\vc{L}^2(\Omega)}
		+\norm{\vc{u} - \vc{u}_{h}}_{\vc{V}_{h}}
		& \leq Ch^s \|\vc{f}\|_{\vc{L}^2(\Omega)}, \\
	\end{split}
\end{equation}
where the convergence rate $s\in [1/2,1)$ depends on the regularity of $\partial \Omega$. 
If $\partial \Omega$ is Lipschitz and convex, \eqref{eq:mixedellipticregularity} holds with $s=1$.
\end{theorem}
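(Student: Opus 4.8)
The plan is to identify \eqref{eq:hodgelaplace} with the mixed formulation of the Hodge--Laplace problem on the middle space of the exact de Rham sequence
\[
0 \to W^{1,2}_{0} \to \vc{W}^{\Curl,2}_{0} \to \vc{W}^{\Div,2}_{0} \to L^2_{0} \to 0
\]
(with maps $\operatorname{grad}$, $\operatorname{curl}$, $\operatorname{div}$; in two dimensions the gradient is dropped, as in Section \ref{sec:prelim}), with $\vc{w}=\Curl\vc{u}$ playing the role of the auxiliary lower--degree unknown. Testing the first equation of \eqref{eq:hodgelaplace} against $\vc{v}\in\vc{W}^{\Div,2}_{0}$, testing the identity $\vc{w}=\Curl\vc{u}$ against $\vc{\eta}\in\vc{W}^{\Curl,2}_{0}$, and using \eqref{eq:integrationbyparts} together with the boundary conditions \eqref{eq:bc-normal}--\eqref{eq:bc-navierslip}, one arrives at the weak problem: find $(\vc{w},\vc{u})\in\vc{W}^{\Curl,2}_{0}\times\vc{W}^{\Div,2}_{0}$ with $\int_{\Om}\vc{w}\vc{\eta}-\vc{u}\Curl\vc{\eta}\ dx=0$ for all $\vc{\eta}$ and $\int_{\Om}\Curl\vc{w}\vc{v}+\Div\vc{u}\Div\vc{v}\ dx=\int_{\Om}\vc{f}\vc{v}\ dx$ for all $\vc{v}$. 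I would then invoke the abstract theory of mixed methods for Hodge--Laplace problems (cf.~\cite{Brezzi:1991lr}): the hypotheses needed are that $\Curl$ and $\Div$ have closed range and that the space of harmonic fields is trivial. The first follows from the discrete Poincar\'e inequalities' continuous counterparts and the equivalence of $\norm{\cdot}_{\mcw}$ with the $\vc{H}^1$ norm recorded in Section \ref{sec:prelim}; the second holds because $\Om$ is simply connected, i.e.~the sequence above is exact, so that in particular the Hodge decomposition \eqref{eq:hodge-cont} is available. This yields a unique weak solution with $\norm{\vc{w}}_{\vc{W}^{\Curl,2}(\Om)}+\norm{\vc{u}}_{\vc{W}^{\Div,2}(\Om)}\le C\norm{\vc{f}}_{\vc{L}^2(\Om)}$; concretely one can also see this by splitting $\vc{u}$ according to \eqref{eq:hodge-cont} into its $\operatorname{curl}$--part and its gradient part, which decouples \eqref{eq:hodgelaplace} into a $\operatorname{curl}$--$\operatorname{curl}$ problem and a $\operatorname{grad}$--$\operatorname{div}$ problem.

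For the discrete problem, the decisive structural input is that $(S_{h},\vc{W}_{h},\vc{V}_{h},Q_{h})$ forms a subcomplex of the sequence above and that the canonical interpolants $(\Pi_{h}^S,\Pi_{h}^W,\Pi_{h}^V,\Pi_{h}^Q)$ are cochain projections --- which is exactly the commuting diagram property recalled in Section \ref{sec:prelim}. This transfers the two abstract hypotheses to the discrete level: the discrete $\operatorname{curl}$ and $\operatorname{div}$ have closed range, the discrete harmonic fields vanish, and the discrete Poincar\'e inequalities \eqref{Poincare1}--\eqref{Poincare2} hold. Consequently the discrete mixed formulation of \eqref{eq:hodgelaplace} is well posed --- this is precisely the Babu\v ska--Brezzi condition invoked in Section \ref{sec:existence-method} --- and the abstract theory delivers the quasi-optimal estimate
\[
\norm{\vc{w}-\vc{w}_{h}}_{\vc{L}^2(\Om)}+\norm{\vc{u}-\vc{u}_{h}}_{\vc{V}_{h}}\le C\Big(\inf_{\vc{\eta}_{h}\in\vc{W}_{h}}\norm{\vc{w}-\vc{\eta}_{h}}_{\vc{W}^{\Curl,2}}+\inf_{\vc{v}_{h}\in\vc{V}_{h}}\norm{\vc{u}-\vc{v}_{h}}_{\vc{W}^{\Div,2}}\Big).
\]

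To turn quasi-optimality into the rate in \eqref{eq:mixedellipticregularity}, I would take $\vc{\eta}_{h}=\Pi_{h}^W\vc{w}$ and $\vc{v}_{h}=\Pi_{h}^V\vc{u}$, estimate the $\vc{L}^2$--parts of the interpolation errors by Lemma \ref{lemma:interpolation}, and use the commuting identities $\Curl\Pi_{h}^W\vc{w}=\Pi_{h}^V\Curl\vc{w}$ and $\Div\Pi_{h}^V\vc{u}=\Pi_{h}^Q\Div\vc{u}$ to rewrite the $\operatorname{curl}$-- and $\operatorname{div}$--parts of those errors as $L^p$ interpolation errors of $\Curl\vc{w}$ and $\Div\vc{u}$, again controlled by Lemma \ref{lemma:interpolation}. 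What is then left is the Sobolev regularity of $(\vc{w},\vc{u})$: for the vector Laplacian with the boundary conditions \eqref{eq:bc-normal}--\eqref{eq:bc-navierslip} on a Lipschitz polyhedron one has $\vc{u},\vc{w}\in\vc{W}^{1/2+s,2}$ with some $s>0$ fixed by the worst corner/edge (together with the corresponding one--derivative gain on $\Div\vc{u}$ and $\Curl\vc{w}$), whereas on a convex Lipschitz domain the full $\vc{W}^{1,2}$--regularity is available; inserting these bounds into the preceding step gives the claimed $O(h^{s})$ with $s\in[1/2,1)$, improving to $O(h)$ in the convex case.

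The main obstacle, and the step carrying the whole argument, is the passage from the abstract well-posedness hypotheses to their discrete counterparts --- the discrete Poincar\'e inequalities and the discrete inf--sup condition --- since these fail for generic element spaces and hinge entirely on the commuting diagram property, which is exactly why the lowest--order first--kind N\'ed\'elec spaces are chosen here. A secondary, more technical point is pinning down the exponent $s$: sharp elliptic regularity on nonsmooth domains for this particular system must be cited with care, and it is precisely the mismatch between the mere $\vc{L}^2$--information coming from the energy estimate (Lemma \ref{lemma:stability}) and the $\vc{W}^{1,2}$ (or $\vc{W}^{1/2+s,2}$) regularity needed to feed Lemma \ref{lemma:interpolation} that forces the restriction $s\in[1/2,1)$.
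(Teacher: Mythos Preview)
Your proposal is correct and matches the paper's approach: the paper's own proof is simply a citation to Theorem~7.9 in Arnold--Falk--Winther \cite{Arnold:2006wj}, and what you have written is precisely an outline of the finite element exterior calculus machinery that theorem encapsulates (Hodge--Laplace identification, bounded cochain projections giving discrete Poincar\'e and inf--sup, quasi-optimality, then elliptic regularity on Lipschitz polyhedra for the rate). The only minor remark is that you cite \cite{Brezzi:1991lr} for the abstract framework, whereas the sharper and more directly applicable reference here is \cite{Arnold:2006wj}, which handles the full de~Rham complex setting and the regularity--to--rate step in one package.
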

\begin{proof}
For example, cf.~Theorem 7.9 in \cite{Arnold:2006wj}.
\end{proof}

\begin{lemma}
\label{lemma:curlcontrol}
Let $\{\vc{z}_{h}\}_{h>0}$ be a sequence in $\vc{V}^{0,\perp}_{h}$ for which 
$\norm{\Div \vc{z}_{h}}_{L^2(\Omega)}\leq C$, where 
the constant $C>0$ is independent of $h$. Then
\begin{equation*}%\label{eq:Hmin-compactcurl}
	\norm{\Curl \vc{z}_{h}}_{\vc{W}^{-1,2}(\Omega)}  
	\leq Ch \norm{\Div \vc{z}_{h}}_{L^2(\Omega)},
\end{equation*}
for some constant $C$ independent of $h$.
\end{lemma}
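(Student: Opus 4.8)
The plan is to estimate $\norm{\Curl \vc{z}_h}_{\vc{W}^{-1,2}(\Om)}$ by testing against an arbitrary $\vc{\phi} \in \vc{W}^{1,2}_0(\Om)$ and exploiting the fact that $\vc{z}_h$ is \emph{discretely} weakly curl free, i.e. $\vc{z}_h \in \vc{V}_h^{0,\perp}$, so $\int_\Om \vc{z}_h \cdot \Curl \vc{\psi}_h \ dx = 0$ for every $\vc{\psi}_h \in \vc{W}_h$. Since we only know $\vc{z}_h \cdot \Curl \vc{\psi}_h$ vanishes for \emph{discrete} test fields, the idea is to write $\int_\Om \vc{z}_h \cdot \Curl \vc{\phi}\ dx = \int_\Om \vc{z}_h \cdot \Curl(\vc{\phi} - \Pi_h^W \vc{\phi})\ dx$ (after first mollifying $\vc{\phi}$ so that the canonical interpolant $\Pi_h^W$ is well defined), and then to bound this by the interpolation error estimate of Lemma \ref{lemma:interpolation} together with the $\vc{L}^2$ bound on $\vc{z}_h$.

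First I would fix $\vc{\phi} \in \vc{C}_c^\infty(\Om)$ (dense in $\vc{W}^{1,2}_0(\Om)$); by the commuting diagram property $\Curl \Pi_h^W \vc{\phi} = \Pi_h^V \Curl \vc{\phi}$, and since $\Pi_h^W \vc{\phi} \in \vc{W}_h$ the discrete weak curl freeness of $\vc{z}_h$ gives $\int_\Om \vc{z}_h \cdot \Curl \Pi_h^W \vc{\phi}\ dx = 0$. Hence
$$
\int_\Om \vc{z}_h \cdot \Curl \vc{\phi}\ dx = \int_\Om \vc{z}_h \cdot \left(\Curl \vc{\phi} - \Curl \Pi_h^W \vc{\phi}\right)\ dx = \int_\Om \vc{z}_h \cdot \left(\Curl \vc{\phi} - \Pi_h^V \Curl \vc{\phi}\right)\ dx.
$$
Now I would apply Cauchy--Schwarz and then the interpolation estimate of Lemma \ref{lemma:interpolation} applied to $\vc{v} := \Curl \vc{\phi}$ (with $s=1$, $p=2$): $\norm{\Curl \vc{\phi} - \Pi_h^V \Curl \vc{\phi}}_{\vc{L}^2(\Om)} \le Ch\norm{D\Curl \vc{\phi}}_{\vc{L}^2(\Om)} \le Ch \norm{\vc{\phi}}_{\vc{W}^{2,2}(\Om)}$. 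This would give a bound in terms of the $\vc{W}^{2,2}$ norm of $\vc{\phi}$, which is too strong; the fix is to instead interpolate $\Pi_h^W$ only after mollifying $\vc{\phi}$, or, more cleanly, to use the alternative estimate in Lemma \ref{lemma:interpolation} with $r=1$ bounding $\norm{\vc{v} - \Pi_h^V \vc{v}}_{\vc{L}^2} \le Ch\norm{D\vc{v}}_{\vc{L}^2}$ — but since $D\Curl \vc{\phi}$ involves second derivatives of $\vc{\phi}$, I would instead regularize: replace $\vc{\phi}$ by $\vc{\phi}^\delta = \kappa^\delta \star \vc{\phi}$ and balance $\delta$ against $h$. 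A slicker route, and the one I expect to actually use, is to note that $\Pi_h^V \Curl \vc{\phi}$ can be compared to $\Curl \vc{\phi}$ via the $L^2$-stable Clément-type quasi-interpolant combined with $\norm{\Curl \vc{\phi} - \Pi_h^V \Curl \vc{\phi}}_{\vc{L}^2(\Om)} \le C h \norm{\Curl \vc{\phi}}_{\vc{W}^{1,2}(\Om)}$ and then absorb everything into $\norm{\vc{\phi}}_{\vc{W}^{1,2}(\Om)}$ only if we move one derivative off $\vc{\phi}$ by integrating by parts back against $\vc{z}_h$; concretely, integration by parts \eqref{eq:integrationbyparts} on each element rewrites $\int_E \vc{z}_h \cdot \Curl(\vc{\phi} - \Pi_h^W \vc{\phi})\ dx$ using $\Curl \vc{z}_h|_E = 0$ as a sum of boundary terms $\int_{\partial E}(\vc{\phi}-\Pi_h^W\vc{\phi})\cdot(\vc{z}_h \times \nu)\ dS$, and then Lemma \ref{lemma:edgebounds} plus the interpolation bound $\norm{\vc{\phi} - \Pi_h^W \vc{\phi}}_{\vc{L}^2} \le C h \norm{D\vc{\phi}}_{\vc{L}^2}$ delivers the factor $h$ against $\norm{D\vc{\phi}}_{\vc{L}^2(\Om)}$ and $\norm{\vc{z}_h}_{\vc{L}^2(\Om)}$; finally $\norm{\vc{z}_h}_{\vc{L}^2(\Om)} \le C\norm{\Div \vc{z}_h}_{L^2(\Om)}$ by the discrete Poincaré inequality \eqref{Poincare1}. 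Dividing by $\norm{\vc{\phi}}_{\vc{W}^{1,2}(\Om)}$ and taking the supremum then yields $\norm{\Curl \vc{z}_h}_{\vc{W}^{-1,2}(\Om)} \le Ch\norm{\Div \vc{z}_h}_{L^2(\Om)}$.

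The main obstacle is getting the interpolation/integration-by-parts bookkeeping to produce exactly one power of $h$ against a \emph{first-order} (not second-order) norm of $\vc{\phi}$: naively $\Curl \vc{\phi} - \Pi_h^V \Curl \vc{\phi}$ costs $h\norm{D^2\vc{\phi}}$, which is useless for an $\vc{W}^{-1,2}$ estimate, so the element-wise integration by parts using $\Curl \vc{z}_h|_E = 0$ to shift the curl onto $\vc{z}_h$ and land on jump/trace terms is essential. Care is also needed at boundary faces, where $\Pi_h^W$ enforces vanishing tangential degrees of freedom consistent with $\vc{\phi} \times \nu = 0$ since $\vc{\phi} \in \vc{W}^{1,2}_0$, so the boundary contributions are handled by the same interpolation estimate. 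Everything else — density of $\vc{C}_c^\infty(\Om)$, the discrete Poincaré inequality, and Lemma \ref{lemma:edgebounds} — is standard.
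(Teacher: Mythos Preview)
Your element-wise integration-by-parts route does not deliver the required factor of $h$. After subtracting $\Curl \Pi_h^W\vc{\phi}$ (which is legitimate) and using $\Curl \vc{z}_h|_E=0$, you are left with
$\sum_{E}\int_{\partial E}(\vc{\phi}-\Pi_h^W\vc{\phi})\cdot(\vc{z}_h\times\nu)\ dS$.
Applying Lemma \ref{lemma:edgebounds} to both factors gives, optimistically,
$\|\vc{\phi}-\Pi_h^W\vc{\phi}\|_{L^2(\partial E)}\lesssim h^{-1/2}\cdot h\|D\vc{\phi}\|_{L^2(E)}=h^{1/2}\|D\vc{\phi}\|_{L^2(E)}$
and
$\|\vc{z}_h\|_{L^2(\partial E)}\lesssim h^{-1/2}\|\vc{z}_h\|_{L^2(E)}$,
so the product over all elements is $O(1)\cdot\|D\vc{\phi}\|_{L^2(\Om)}\|\vc{z}_h\|_{L^2(\Om)}$ with \emph{no} residual power of $h$. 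The single $h$ gained from the $L^2$ interpolation error is exactly cancelled by the two $h^{-1/2}$ trace losses. (It is actually slightly worse: the trace lemma for $\vc{\psi}:=\vc{\phi}-\Pi_h^W\vc{\phi}$ also needs $h\|D\vc{\psi}\|_{L^2(E)}$, but Lemma \ref{lemma:interpolation} controls only $\Curl\vc{\psi}$, not the full gradient; the N\'ed\'elec interpolant is not $H^1$-stable.) Your earlier, honest observation that the direct Cauchy--Schwarz bound costs $h\|\vc{\phi}\|_{\vc{W}^{2,2}}$ is exactly the obstruction, and the integration-by-parts trick does not repair it.

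The paper's argument is genuinely different. It does \emph{not} use the canonical interpolant $\Pi_h^W$. Instead it defines an operator $\Pi^h:\vc{W}^{\Curl,2}_0\to\vc{W}_h$ through the mixed finite element discretization of the Hodge--Laplace problem $\Curl\vc{w}-D\Div\vc{\theta}=\Curl\vc{\phi}$, $\vc{w}=\Curl\vc{\theta}$ (Theorem \ref{theorem:mixedelliptic}), setting $\Pi^h\vc{\phi}:=\vc{w}_h$. Because the continuous solution satisfies $\Div\vc{\theta}=0$, the discrete divergence $\Div\vc{\theta}_h$ is pure discretization error and obeys $\|\Div\vc{\theta}_h\|_{L^2}\le Ch\|\Curl\vc{\phi}\|_{L^2}$ by \eqref{eq:mixedellipticregularity}. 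Testing the first discrete equation with $\vc{v}_h=\vc{z}_h\in\vc{V}_h^{0,\perp}$ then yields the exact identity
$\int_\Om \vc{z}_h\,\Curl(\vc{\phi}-\Pi^h\vc{\phi})\ dx=\int_\Om \Div\vc{z}_h\ \Div\vc{\theta}_h\ dx$,
and Cauchy--Schwarz finishes. The decisive $O(h)$ comes from the convergence rate of the mixed method, not from interpolation/trace bookkeeping.
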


\begin{proof}
To prove this lemma, we will use the mixed system \eqref{eq:hodgelaplace} to 
define a new operator. To motivate the construction, consider the problem
\begin{equation}\label{eq:beginning}
	\begin{split}
		& -\Delta \vc{\theta} = \Curl \vc{\phi} \quad \text{in $\Om$}, 
		\quad \vc{\theta} \cdot \nu = 0,\, 
		\Curl \vc{\theta} \times \nu = 0
		\quad \text{on $\pOm$},
	\end{split}
\end{equation}	
for some given $\vc{\phi} \in \vc{W}^{\Curl,2}_0(\Om)$. 
By utilizing $D \Delta^{-1} \Div \vc{\theta}$
as test function in the weak formulation of \eqref{eq:beginning}, where $\Delta^{-1}$ 
is the Neuman Laplace inverse, it is easily seen that the weak solution 
$\vc{\theta}$ of the system \eqref{eq:beginning} is divergence free. 
Furthermore, we can set $\vc{w} = \Curl \vc{\theta}$ and integrate 
by parts to conclude that the pair $(\vc{w}, \vc{\theta})$ is also 
the unique weak solution of the mixed Laplace 
system \eqref{eq:hodgelaplace} with $\vc{f} = \Curl \vc{\phi}$.

Now we define a new operator $\Pi^h: \vc{W}^{\Curl,2}_{0} \to \vc{W}_{h}$ 
as the unique function $\Pi^h\vc{\phi} \in \vc{W}_{h}$ 
satisfying the finite element formulation:
\begin{equation}\label{eq:operatorformulation}
	\begin{split}
		\int_{\Omega}\Curl (\Pi^h\vc{\phi})\vc{v}_{h}
		+\Div \vc{\theta}_{h}\Div \vc{v}_{h}\ dx 
		& =\int_{\Omega}\Curl \vc{\phi} \vc{v}_{h}\ dx, 
		\quad \forall \vc{v}_{h} \in \vc{V}_{h},\\
		\int_{\Omega}\Curl \vc{\eta}_{h}\vc{\theta}_{h}\ dx 
		& =\int_{\Omega}(\Pi^h\vc{\phi})\vc{\eta}_{h}\ dx, 
		\quad \forall \vc{\eta}_{h} \in \vc{W}_{h}.
	\end{split}
\end{equation}
The existence of such a function $\Pi^h\vc{\phi}$ is 
given by Theorem \ref{theorem:mixedelliptic}. 
Using the fact that $\Div \vc{\theta}= 0$, the error 
estimate \eqref{eq:mixedellipticregularity} yields
\begin{equation}\label{eq:projerror}
	\norm{\Pi^h \vc{\phi}-\Curl \vc{\theta}}_{\vc{L}^2(\Om)} 
	+\norm{\vc{\theta}_h-\vc{\theta}}_{\vc{L}^2(\Om)} 
	+\norm{\Div \vc{\theta}_h}_{L^2(\Om)}
	\leq Ch^s \norm{\Curl \vc{\phi}}_{\vc{L}^2(\Om)}.
\end{equation}

Let $\vc{z}_{h}$ be as stated in the lemma. Since $\vc{z}_{h}$ is 
orthogonal to functions in $\vc{W}_{h}$,
\begin{equation*}%\label{eq:curlcomp1}
	\begin{split}
		\norm{\Curl \vc{z}_{h}}_{\vc{W}^{-1,2}(\Om)}
		& = \sup_{\vc{\phi}\in\vc{W}^{1,2}_{0}}
		\frac{\abs{\int_{\Om}\vc{z}_{h}\Curl\vc{\phi}\ dx}}{\norm{\vc{\phi}}_{\vc{W}^{1,2}(\Om)}}
		\\ & = \sup_{\vc{\phi}\in\vc{W}^{1,2}_{0}}\frac{\abs{\int_{\Om}\vc{z}_{h}
		\Curl (\vc{\phi}-\Pi^h\vc{\phi})\ dx}}{\norm{\vc{\phi}}_{\vc{W}^{1,2}(\Om)}}
		\\ & \overset{\eqref{eq:operatorformulation}}{=} 
		\sup_{\vc{\phi} \in \vc{W}^{1,2}_{0}}
		\frac{\abs{\int_{\Om}\Div \vc{z}_{h}
		\Div \vc{\theta}_{h}\ dx}}{\norm{\vc{\phi}}_{\vc{W}^{1,2}(\Om)}}
		\\ & \le C h \norm{\Div \vc{z}_{h}}_{L^2(\Om)},
	\end{split}
\end{equation*}
where we have used \eqref{eq:projerror} to derive the last 
inequality, specifically the estimate
$$
\norm{\Div \vc{\theta}_{h}}_{L^2(\Om)} 
\leq C h\norm{\Curl \vc{\phi}}_{L^2(\Om)}
\le Ch\norm{\vc{\phi}}_{\vc{W}^{1,2}(\Om)}.
$$
This concludes the proof.
\end{proof}

\end{document}